\let\oldmarginpar\marginpar
\renewcommand\marginpar[1]
\newcommand{\SL}{\mathrm{SL}}
\newcommand{\M}{\mathrm{M}}
\newcommand{\SO}{\mathrm{SO}}
\newcommand{\p}{\mathrm{p}}
\newcommand{\G}{\mathrm{G}}
\newcommand{\K}{\mathrm{K}}
\newcommand{\A}{\mathrm{A}}
\newcommand{\B}{\mathrm{B}}
\newcommand{\N}{\mathrm{N}}
\newcommand{\U}{\mathrm{U}}
\newcommand{\W}{\mathrm{W}}
\renewcommand{\M}{\mathrm{M}}
\renewcommand{\sl}{\mathrm{sl}}
   \renewcommand{\a}{\alpha}
  \newcommand{\w}{\omega}
   \newcommand{\R}{\mathbb{R}}
   \newcommand{\C}{\mathbb{C}}
  \newcommand{\LU}{\mathrm{L}\mathrm{U}}
  \renewcommand{\L}{\mathrm{L}}
     \newcommand{\cB}{\mathcal{B}}
     \newcommand{\cC}{\mathrm{C}}
    \newcommand{\cL}{\mathcal{L}}
  \newcommand{\cO}{\mathcal{O}}
    \newcommand{\cT}{\mathcal{T}}
  \newcommand{\cU}{\mathcal{U}}
\renewcommand{\gg}{\mathfrak{g}}        
\renewcommand{\aa}{\mathfrak{a}}
\newcommand{\mm}{\mathfrak{m}}
\newcommand{\kk}{\mathfrak{k}}          
\newcommand{\pp}{\mathfrak{p}}          
\newcommand{\uu}{\mathfrak{u}}
\newcommand{\nn}{\mathfrak{n}}
\newtheorem{proposition}{{\bf Proposition}}
\newtheorem{lemma}{{\bf Lemma}}
\newtheorem{theorem}{{\bf Theorem}}
\newtheorem{corollary}{{\bf Corollary}}
\newtheorem{definition}{ {\bf Definition}}
\newtheorem{remark}{Remark}
\begin{document}

\author{David Mart\'inez Torres}
\address{Departamento de Matemtica, PUC-Rio, R. Mq. S. Vicente 225, Rio de Janeiro 22451-900, Brazil}
\email{dfmtorres@gmail.com}

\author{Carlos Tomei}
\address{Departamento de Matemtica, PUC-Rio, R. Mq. S. Vicente 225, Rio de Janeiro 22451-900, Brazil}
\email{carlos.tomei@gmail.com}

%
%
%

%

\title{An atlas adapted to the Toda flow}

\begin{abstract}
We describe an atlas adapted to the Toda flow on the manifold of full flags of any non-compact real semisimple Lie algebra, and on its
Hessenberg-type submanifolds. We show that in these local coordinates the Toda flow becomes linear.
The local coordinates are used to show that the Toda flow on the manifold of full flags is Morse-Smale,
which generalizes the result for traceless matrices in \cite{SV} to arbitrary non-compact real semisimple Lie algebras.
As a byproduct we describe  new features
of classical constructions in matrix theory.

\end{abstract}
\maketitle

\section{Introduction}

The non-periodic Toda lattice is a Hamiltonian model for a wave propagation along n particles in a line proposed by Toda
\cite{T}. A change of variables introduced by Flaschka \cite{F} transforms the original O.D.E. into
the matrix differential equation
\begin{equation}\label{eq:Toda}
 X'=[X,\pi_\kk X] = \cT(X),
\end{equation}
where $X$ runs over Jacobi matrices and $\pi_\kk$ is the first projection
associated to the decomposition of a matrix into its antisymmetric and upper triangular summands.

From a mathematical viewpoint (\ref{eq:Toda}) is a vector field everywhere defined on the Lie algebra of real traceless
matrices. Since it is in Lax form  it is tangent to every adjoint orbit and, in particular, to the orbit made of traceless
matrices of any fixed simple real spectrum (Jacobi matrices have simple real spectrum). Moreover, formula (\ref{eq:Toda}) implies that the Toda
vector field $\cT$ is tangent to any vector subspace which is stable upon taking Lie bracket with antisymmetric matrices.
For instance, one may pass to a compact setting by intersecting the adjoint orbit with the subspace of symmetric matrices, that is,
to the
manifold of real full flags $\cO$.
One can also intersect the adjoint orbit --or the manifold of real
full flags-- with subspaces, specified by certain zero entries, which generalize the Hessenberg property
(Jacobi matrices have the Hessenberg property, i.e., they have trivial entries below the subdiagonal).

From a Lie theoretic viewpoint, there is no reason to stick to $\sl(n,\R)$. If $\gg$ is a non-compact real semisimple Lie algebra,
a choice of Iwasawa
decomposition allows us to define the Toda vector field on $\gg$, as in (\ref{eq:Toda}) \cite{MP}. In this generality the Toda vector field
is also tangent to the regular hyperbolic adjoint orbits, to the manifold of real full flags $\cO$, and to its  Hessenberg-type submanifolds.

The main result of this paper is the construction  of an atlas for $\cO$ and its Hessenberg-type submanifolds
which linearizes the Toda vector field $\cT$.

\begin{theorem}\label{thm:atlas} Let $\gg$ be a non-compact real semisimple Lie algebra. Then there exists an atlas of the manifold of real full
flags of $\gg$ whose charts, indexed by the Weyl group, satisfy the following properties.
\begin{enumerate}[(i)]
\item The domain of each of the local coordinates defined by the atlas is a dense subset of the manifold of real full flags $\cO$ of $\gg$.
The image of the local coordinates is the whole Euclidean space.
 \item On each of the local coordinates, $\cT$ corresponds
 to a linear vector field defined on the whole Euclidean space. In particular, $\cT$ is complete in the domain of each of the local coordinates.
 \item The atlas is adapted to any Hessenberg-type submanifold: on local coordinates, the intersection with a Hessenberg-type submanifold corresponds to a vector subspace,
 and, therefore, $\cT$
 also corresponds to a linear vector field defined on the whole linear subspace.
\end{enumerate}
\end{theorem}

The result extends constructions originating from Moser's \cite{Mo} {\it inverse variables for Jacobi matrices},
which he attributes to Stieltjes. Jacobi matrices are real, symmetric matrices with strictly positive entries
in the off-diagonal positions $(i, i+1)$. The inverse variables associated with a Jacobi matrix $J$
is the list of its ordered eigenvalues (which are necessarily distinct), together with the first coordinates
of its normalized eigenvectors (which may always be taken to be strictly positive numbers).
Such diffeomorphism essentially linearizes $\cT$: eigenvalues stay put, and the evolution of the vector $c(t)$ of
first coordinates is the normalization of $exp(t \Lambda) c(0)$, where $\Lambda$ is a diagonal
matrix with the ordered eigenvalues on its diagonal. The Hamiltonian nature of the equation led to further investigation on the subject,
but there is a price to pay: the asymptotic behavior of the system (i.e., the fact that the flow converges to a diagonal matrix)
relates to a point outside of the coordinate system. More, the coordinates are very degenerate at their boundary:
the closure of the set of Jacobi matrices with a fixed spectrum is homeomorphic to a permutohedron (\cite{To}, \cite{BFR}).
However, this closure sits as a compact subset with non-empty interior of a Hessenberg-type submanifold.
Leite, Saldanha and Tomei \cite{LST1} gave a proof of item (iii) in Theorem  \ref{thm:atlas} for this simpler context:
they introduced new local coordinates on the Hessenberg-type submanifold which now contain
the relevant asymptotic points of $\cT$ in their domain  and
 where $\cT$ is linearized.
This led to a detailed study of a frequently used algorithm in numerical spectral theory,
the $\mathrm{Q}\mathrm{R}$ iteration under Wilkinson shifts (\cite{LST2},\cite{LST3}).

Our main application of Theorem \ref{thm:atlas} is to genericity properties of $\cT$ on $\cO$.

\begin{theorem}\label{thm:st-ust}  Let $\gg$ be a non-compact real semisimple Lie algebra.
 Then the unstable and stable manifolds of the $\cT$ on the  manifold of real
 full flags $\cO$ coincide with the Bruhat and opposite Bruhat cells associated to the fixed Iwasawa decomposition.
 As a consequence the Toda vector field $\cT$ is Morse-Smale.
\end{theorem}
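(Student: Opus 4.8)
The plan is to read the dynamics straight off the linearizing atlas of Theorem~\ref{thm:atlas} and then match the resulting invariant manifolds with the Bruhat stratification. First I would identify the zeros of $\cT$ on $\cO$: these are the $|W|$ translates $p_w=w\cdot o$ of the base flag $o$ under the Weyl group $W$. Fixing the chart $\phi_w$ of Theorem~\ref{thm:atlas} whose origin is $p_w$, part~(ii) tells us $\cT$ becomes the complete linear field $\dot y=L_w y$ on Euclidean space. Computing $L_w$ in the root coordinates, its eigenvalues are the numbers $\alpha(wH_0)$, with $\alpha$ running over the roots and $H_0\in\aa$ the regular element defining the orbit; regularity forces all of them to be nonzero, so $L_w$ is hyperbolic and invertible. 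Hence the origin is the \emph{only} zero of $L_w$, i.e.\ $p_w$ is the unique fixed point in the dense domain $U_w$, every fixed point is hyperbolic, and—since the $U_w$ cover $\cO$ and each carries a linear hyperbolic flow—the non-wandering set of $\cT$ reduces to $\{p_w:w\in W\}$, with no periodic orbits.

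Next I would pin down the invariant manifolds. By part~(ii) the flow is complete in $U_w$, so both $U_w$ and its complement are $\cT$-invariant; as $p_w$ lies in the open set $U_w$ it is not in the closure of the complement, whence any trajectory converging to $p_w$ must lie in $U_w$, giving $W^s(p_w),W^u(p_w)\subseteq U_w$. In the chart these are exactly the stable and unstable subspaces $E^s_w,E^u_w$ of $L_w$, so that $W^s(p_w)=\phi_w^{-1}(E^s_w)$ and $W^u(p_w)=\phi_w^{-1}(E^u_w)$ are embedded cells diffeomorphic to Euclidean spaces, of dimensions dictated by the sign of $\alpha(wH_0)$.

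To reach the stated description I would identify these cells with the Bruhat stratification. The Bruhat cell $C_w=\N w\cdot o$ and its opposite $C_w^-=\bar{\N}w\cdot o$ are $\cT$-invariant—this follows from the Iwasawa/$\mathrm{QR}$ description of the flow, under which $\cT$ preserves the relative position of a flag with respect to the base flag—and on them the flow limits to $p_w$ as $t\to-\infty$ on $C_w$ and as $t\to+\infty$ on $C_w^-$. Thus $C_w\subseteq W^u(p_w)$ and $C_w^-\subseteq W^s(p_w)$; since $\dim C_w$ matches $\dim E^u_w$ (both governed by the same count of roots according to the sign of $\alpha(wH_0)$) and each invariant manifold is a connected cell, the inclusions are equalities, yielding $W^u(p_w)=C_w$ and $W^s(p_w)=C_w^-$.

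Finally, the Morse--Smale property demands the transversality $W^u(p_v)\pitchfork W^s(p_w)$ for all $v,w\in W$. At a common point $x\in C_v\cap C_w^-$ one has $T_xW^u(p_v)=$ image of $\nn$ and $T_xW^s(p_w)=$ image of $\bar\nn$ under the infinitesimal action, so the claim amounts to $\nn+\bar\nn+\pp_x=\gg$, where $\pp_x$ is the stabilizer subalgebra at $x$; this is the classical transversality of opposed Bruhat cells. Together with the hyperbolicity of the fixed points and the absence of recurrence, this gives the Morse--Smale property. I expect the main obstacle to be exactly this last step: matching the dynamically defined stable and unstable manifolds with the algebraic cells, and then verifying $\nn+\bar\nn+\pp_x=\gg$ along \emph{every} intersection point, rather than only at the fixed points where it is immediate.
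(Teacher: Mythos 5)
Your overall strategy (read the dynamics off the linearizing atlas, then match the invariant manifolds with the Bruhat stratification) is the paper's strategy, but the step where you identify $W^u(p_w)$ with the Bruhat cell contains two genuine gaps. First, the inclusion $\cB_w\subseteq W^u(p_w)$ rests on the assertion that the Bruhat cells are $\cT$-invariant and that the flow on $\cB_w$ limits backward to $p_w$, which you justify only by appeal to ``the Iwasawa/QR description of the flow.'' That description is the $\sl(n,\R)$ factorization story of Reyman--Semenov-Tian-Shansky and Symes; establishing it for an arbitrary non-compact real semisimple $\gg$ is precisely the content that has to be proved, and it does not follow from Theorem~\ref{thm:atlas}(ii) alone, since that statement says nothing about where the Bruhat cells sit inside the charts. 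The paper supplies exactly this missing ingredient in Lemma~\ref{lem:cover-Bruhat} and Proposition~\ref{pro:atlas-Bruhat}: one first shows $\cB_w,\overline{\cB}_w\subseteq \cU_w$ and then computes the images \emph{exactly}, $\varphi_w(\cB_w)=H^w+\overline{\nn}(\overline{w})$ and $\varphi_w(\overline{\cB}_w)=H^w+\overline{\nn}(w)$, using the interaction of the $\U\overline{\N}$ factorization with the Weyl group (Proposition~\ref{pro:fact-Weyl}). Combined with Theorem~\ref{thm:lin-atlas}, which identifies these affine subspaces as the unstable and stable subspaces of the linearized field $X'=[H^w,X]$, the identification of cells with invariant manifolds is an equality from the outset, with no dimension count needed.

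Second, even granting the inclusion $\cB_w\subseteq W^u(p_w)$, your upgrade to equality via ``same dimension plus connected cell'' does not work as stated: a proper open subset of $\R^k$ can perfectly well be a connected cell of dimension $k$, so an open connected $k$-dimensional submanifold of a $k$-cell need not exhaust it. This can be repaired dynamically -- $\varphi_w(\cB_w)$ would be an open neighborhood of the origin in the unstable subspace, invariant under an expanding linear flow, hence all of it -- but that repair again presupposes the flow-invariance of $\cB_w$ from the first gap. The remaining ingredients of your argument are sound and agree with the paper: the containment $W^u(p_w),W^s(p_w)\subseteq\cU_w$ via invariance of $\cU_w$ and its closed complement is correct (and is implicit in the paper), and the final transversality is handled identically in both treatments by invoking the transverse intersection of Bruhat and opposite Bruhat cells into Richardson cells, for which the paper cites Richardson and \cite[Lemma 4.2]{DKV} rather than verifying $T_x\cB_v+T_x\overline{\cB}_w=T_x\cO$ by hand.
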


Theorem \ref{thm:st-ust} extends to an arbitrary non-compact real semisimple 
Lie algebra the result of \cite{SV} for $\sl(n,\R)$ and $\sl(n,\C)$  and of \cite{CSS1,CSS2,CSS3} for  rank 2 non-compact semi-simple Lie algebras.

The structure of this paper is the following. In Section \ref{sec:lu} we discuss how the
unique $\LU$ factorization of special orthogonal matrices with nonzero
principal minors extends from $\SL(n,\R)$ to real semisimple Lie groups with finite center;  our analysis includes  
the interaction of the
factorization with the Weyl group action. This interaction lies at the core of the
definition of our atlas for $\cO$, which is introduced in Section \ref{sec:atlas}.
We then describe the relation of the atlas
with the Bruhat and opposite Bruhat cells. In Section \ref{sec:lin} we show that the atlas for $\cO$ linearizes
$\cT$ (item (ii) in  Theorem \ref{thm:atlas}).  The
proof is inspired in the well-known $\LU$ factorization of solutions of the Toda flow in
the manifold of full flags $\cO$ of $\sl(n,\R)$ already described forty years ago in \cite{RS} and in \cite{Sy}. In Theorem \ref{thm:st-ust},
item (ii) in   Theorem \ref{thm:atlas} is then used to identify unstable and stable manifolds
of $\cT$ with Bruhat and opposite Bruhat cells.
Section \ref{sec:Hessenberg} extends the linearization properties of the atlas to Hessenberg-type manifolds (item (iii) in Theorem \ref{thm:atlas}).
Non-compact versions of Hessenberg-type manifolds are obtained by intersecting
not just $\cO$ with the appropriate vector subspace, but the whole adjoint orbit.
In Section \ref{sec:flow} we describe a
surjective submersion with contractible fibers from  non-compact to compact Hessenberg
type submanifolds. This is accomplished by means of the flow of a vector field with appropriate normal
hyperbolicity properties.

The existence of (complete) local coordinates  where a given vector field linearizes appears to be a 
remarkable feature. The main assertion of Theorem \ref{thm:atlas}  is that 
the Toda vector field on $\cO$ has this property. Interestingly enough, already in the early eighties Duistermaat Kolk 
and Varadajan
described another  
vector field on $\cO$ with this property: it is the fundamental vector field for the adjoint action defined
by a regular hyperbolic element. However, the  \emph{Bruhat atlas}
described in \cite[Proposition 3.6]{DKV} where such fundamental vector field linearizes is different from our atlas (the open subsets 
of the two covers are different).
The Bruhat atlas also induces an atlas on Hessenberg-type submanifolds, but 
it fails to have good properties there.

 Due to the scope of this paper its methods cannot come from matrix group theory, unlike those in earlier papers 
containing instances of our results.
The methods of this paper pertain to the theory of real semisimple Lie algebras and Lie groups. When specialized to $\sl(n,\R)$ they
shed new light on how some classical constructions for matrices which are of non-linear
nature behave when we place linear constrains given by the vanishing of an appropriate set of entries.
Thus, for example, our analysis in Section \ref{sec:lu} applied to $\G=\SL(n,\R)$  proves a new feature of
the interaction of classical matrix factorization
techniques with permutations. To explain this,
recall that the projection onto the unit lower triangular factor of the $\U\L$ factorization defines
a birational morphism
\begin{equation}
\label{eq:ortho-birrat}\mathfrak{f}:\SO(n,\R)\dashrightarrow \mathrm{L}.
\end{equation}
The identity component  of the subset of matrices with nonzero principal minors, where the map is regular,
is the result of mapping $\L$ into $\SO(n,\R)$ using the Gram-Schmidt
factorization. The latter map is the inverse to the projection onto $\L$ coming from the $\L\U$-factorization.
In other words, the composition of projections associated to the Gram-Schmidt and $\U\L$-factorizations
results in an automorphism
$\Phi:\L\to \L$ encoding the comparison between $\L\U$ and $\U\L$-factorizations.
Because the latter factorizations are not everywhere defined in $\SO(n,\R)$, it is customary to factorize with (partial) pivoting a permutation
matrix. For a permutation matrix $\sigma$, comparing $\L\U$ and $\U\L$-factorizations with pivoting amounts to generalize $\Phi$ to
\begin{equation}\label{eq:lower-birat}
\Phi(\sigma):\L\dashrightarrow \L
 \end{equation}
defined by first mapping a matrix in  $\L$ into $\SO(n,\R)$ using the Gram-Schmidt
factorization, next conjugating the orthogonal matrix by $\sigma$, and then projecting
into the unit lower triangular factor of its $\U\L$ factorization.
Our main result of Section \ref{sec:lu} says the following for $\G=\SL(n,\R)$:

\begin{proposition}\label{pro:fact-Weyl-sl} Let $\sigma$ be any permutation matrix and let $\L(\sigma)\subset \L$ be
subgroup of unit lower triangular matrices whose conjugation by $\sigma$ is lower triangular.
Then the following properties hold.
\begin{enumerate}[(i)]
 \item The subgroup $L(\sigma)$  is the intersection
of $\L$ with certain coordinate subspace of the vector spaces of square matrices.
\item The restriction of $\Phi(\sigma)$ to $\L(\sigma^{-1})$ defines a real analytic diffeomorphism
    \[\Phi(\sigma): \L(\sigma^{-1})\to \L(\sigma).\]
\end{enumerate}
\end{proposition}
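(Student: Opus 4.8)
The plan is to dispatch (i) by a direct computation with matrix entries and to reduce (ii) to the statement that $\Phi(\sigma)$ is a composition of elementary real analytic diffeomorphisms, isolating the combinatorial control of its domain and image as the crux.

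For (i), I would use that conjugation by the permutation matrix $\sigma$ permutes entries: $(\sigma\ell\sigma^{-1})_{ij}=\ell_{\sigma^{-1}(i),\sigma^{-1}(j)}$ for $\ell\in\L$. Requiring $\sigma\ell\sigma^{-1}$ to be lower triangular forces the vanishing of $(\sigma\ell\sigma^{-1})_{ij}$ for $i<j$, that is, of $\ell_{ab}$ for every pair $a>b$ with $\sigma(a)<\sigma(b)$. Since $\ell$ is unipotent these are the only constraints, so $\L(\sigma)=\{\ell\in\L:\ell_{ab}=0\text{ whenever }a>b\text{ and }\sigma(a)<\sigma(b)\}$, which is visibly the intersection of $\L$ with the coordinate subspace obtained by annihilating those entries; the surviving coordinates are indexed by the non-inversions of $\sigma$ below the diagonal. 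Lie-theoretically this records $\L(\sigma)=\N^-\cap\sigma^{-1}\N^-\sigma$, and in particular $\dim\L(\sigma)=\dim\L(\sigma^{-1})$ because $\sigma$ and $\sigma^{-1}$ have the same number of inversions.

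For (ii), write $\gamma$ for the Gram--Schmidt map ($\ell=\gamma(\ell)\,r(\ell)$ with $r(\ell)$ upper triangular of positive diagonal), $\pi$ for the projection onto the unit lower factor of the $\L\U$ factorization, and $\mathfrak f$ for the projection onto the unit lower factor of the $\U\L$ factorization, so that by definition $\Phi(\sigma)(\ell)=\mathfrak f(\sigma\,\gamma(\ell)\,\sigma^{-1})$. The first observation is the elementary identity $\mathfrak f(Q)=\pi(Q^{-1})^{-1}$: if $Q=u\,m$ is a $\U\L$ factorization then $Q^{-1}=m^{-1}u^{-1}$ is an $\L\U$ factorization, so $\pi(Q^{-1})=m^{-1}$; writing $j(x)=x^{-1}$ this reads $\mathfrak f=j\circ\pi\circ j$. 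Setting $c_\sigma(Q)=\sigma Q\sigma^{-1}$ and using $j\circ c_\sigma=c_\sigma\circ j$, I obtain $\Phi(\sigma)=j\circ\pi\circ c_\sigma\circ j\circ\gamma$. Here $j$ and $c_\sigma$ are real analytic diffeomorphisms, and by the excerpt $\pi$ is the inverse of the Gram--Schmidt map $\gamma$ where defined; hence $\Phi(\sigma)$ is automatically real analytic and a diffeomorphism onto its image wherever the composition is defined. The entire content of (ii) is therefore to pin down the domain and the image.

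The main obstacle is to prove that $\L(\sigma^{-1})$ lies in the domain and is carried onto $\L(\sigma)$. Lying in the domain means that $c_\sigma(j(\gamma(\ell)))=\sigma\,\gamma(\ell)^{-1}\sigma^{-1}$ admits an $\L\U$ factorization, i.e.\ that all its leading principal minors are nonzero. Since conjugation by $\sigma$ relabels indices, the $k$-th leading principal minor of $\sigma\,\gamma(\ell)^{-1}\sigma^{-1}$ equals the principal minor of $\gamma(\ell)$ on the index set $S_k=\sigma^{-1}(\{1,\dots,k\})$. I would then invoke the dictionary between these minors of the Gram--Schmidt factor and the Pl\"ucker coordinates of the flag $\ell\,\B^+$: the simultaneous non-vanishing of the minors on $S_1,S_2,\dots$ is precisely the Bruhat-cell condition cutting out the intersection of the big cell with its $\sigma$-translate, which by part (i) is exactly $\L(\sigma^{-1})$. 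A parallel minor computation shows the output is supported on the coordinate subspace defining $\L(\sigma)$, so $\Phi(\sigma)$ maps $\L(\sigma^{-1})$ into $\L(\sigma)$. Surjectivity, and hence the diffeomorphism statement, then follows by running the identical construction with $\sigma^{-1}$: the mirror identity $\pi=j\circ\mathfrak f\circ j$ yields the analytic map $\pi\circ j\circ c_{\sigma^{-1}}\circ\gamma\circ j$, which the same bookkeeping shows is a two-sided inverse $\L(\sigma)\to\L(\sigma^{-1})$. The delicate point throughout, and the step I expect to require the most care, is the exact matching of the minor non-vanishing pattern of $\sigma\,\gamma(\ell)^{\pm1}\sigma^{-1}$ with membership in $\L(\sigma^{\mp1})$, namely the Bruhat-cell combinatorics governed by the inversions of $\sigma$.
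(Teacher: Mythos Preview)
Your treatment of (i) is fine and matches the paper's observation that $\L(\sigma)=\overline{\N}\cap\overline{\N}^\sigma$ is cut out by vanishing of a coordinate set of entries.

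For (ii), however, there is a genuine gap. You assert that ``the simultaneous non-vanishing of the minors on $S_1,S_2,\dots$ is precisely the Bruhat-cell condition cutting out the intersection of the big cell with its $\sigma$-translate, which by part (i) is exactly $\L(\sigma^{-1})$.'' This cannot be right: the locus where those minors are nonzero is an \emph{open dense} subset of $\L$, whereas $\L(\sigma^{-1})$ is a proper closed subgroup of strictly smaller dimension. Example~1 of the paper makes this explicit for $\SL(3,\R)$ and $\sigma=(2,1,3)$: the domain of $\Phi(\sigma)$ is $\{-1-y^2+xyz\neq 0\}$, while $\L(\sigma^{-1})=\{x=0\}$. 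What you actually need are two separate facts: that $\L(\sigma^{-1})$ is \emph{contained} in the domain, and that its image under $\Phi(\sigma)$ lands in $\L(\sigma)$. Your ``parallel minor computation'' for the second fact is asserted but not carried out, and it is not clear how a minor non-vanishing condition would force the vanishing of the specific entries defining $\L(\sigma)$. The Bruhat-cell dictionary you invoke identifies open Schubert-type loci, not the closed subgroups $\L(\sigma^{\pm1})$, so the combinatorics does not line up as you describe.

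The paper takes a different route that sidesteps minors entirely. It proves a general Lie-theoretic statement (Proposition~\ref{pro:fact-Weyl}) for any real semisimple $\G$: using the factorization $\N=\N(w)\N(\overline{w})$ and direct manipulation of Iwasawa and opposite Iwasawa decompositions, it shows that conjugating $\cC(\overline{w^{-1}})\subset\K$ by $w$ and then taking the $\overline{\N}$-component of the $\U\overline{\N}$ factorization lands exactly in $\overline{\N}(\overline{w})$, and similarly $\cC(w^{-1})$ goes to $\overline{\N}(w)$. Specializing to $\G=\SL(n,\R)$ gives (ii) immediately. This approach buys generality and avoids the delicate matching of minor patterns with subgroup membership; your approach, if the gap were filled (for instance by showing directly that $\ell\in\L(\sigma^{-1})$ forces the relevant factorization of $\sigma\gamma(\ell)\sigma^{-1}$ to exist with lower factor in $\L(\sigma)$), would give a self-contained matrix argument, but as written the crux is missing.
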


Another instance of how our Lie theoretic methods give new results for matrices appears in Section \ref{sec:flow}.
The theory there applied to $\gg=\sl(n,\R)$ produces a spectrum preserving  symmetrization flow with the following
additional property: for matrices with appropriately chosen zero entries, the flow keeps those entries trivial
and in the limit produces a symmetric matrix. To spell this out,  recall that an (upper)
Hessenberg matrix is a square matrix whose entries below the subdiagonal vanish. Upper triangular matrices are of
Hessenberg type, and, it is natural to consider other subspaces of the upper triangular matrices by generalizing
the Hessenberg property. This is done by choosing a \emph{profile} $\p$ of disjoint square submatrices along the subdiagonal,
and setting $V_\p$ to be the subspace of square matrices whose lower diagonal coefficients not in the chosen submatrices are zero. For
instance, if we choose no submatrices then we recover upper triangular matrices; the choice of all the $1\times 1$
submatrices gives back Hessenberg matrices. One can also regard at the intersection of any $V_\p$ with symmetric matrices, and note
the existence of an
obvious affine retraction from $V_\p$ into its intersection with symmetric matrices. However, the natural question is whether
a retraction which preserves the spectrum exists. This is yet another instance of the
problem of increasing the number of zero coefficients of a matrix while keeping its spectrum.
Our results in Section \ref{sec:flow} answer this question in the affirmative
for Hessenberg-type matrices with simple real spectrum. Inspired by \cite{AE},  we introduce the algebraic vector field
\[\mathcal{S}(X)=[X,\pi_\uu [X,X^T]],\,\, X\in \sl(n,\R),\]
where $\pi_\uu$ is the second projection associated to the decomposition of a matrix into its antisymmetric and upper triangular summands, and $X^{T}$ 
is the transpose of $X$.
 \begin{theorem}\label{thm:Hess-submersion-sl} The vector field $\mathcal{S}$ on $\sl(n,\R)$ is complete and
 its flow preserves the spectrum of a matrix and any of the Hessenberg-type
 subspaces $V_\p$.

 Its restriction to the open subset of traceless matrices with simple real spectrum has the
 following properties.
 \begin{enumerate}[(i)]
  \item Its flow lines converge to its zero set, which are symmetric traceless matrices with simple real spectrum. There, it is normally hyperbolic.
  \item The collection of flow lines with the same limit point fit into the fiber of a (smooth) submersion from
traceless matrices with simple real spectrum onto symmetric traceless matrices with simple real spectrum.
\item The submersion
restricts to a submersion upon fixing any  spectrum (fixing a regular adjoint orbit) and upon fixing any Hessenberg-type
subspace $V_\p$.
\item If a limit point belongs to a Hessenberg-type subspace $V_\p$, then its fiber lies in $V_\p$.
Equivalently, if a matrix is not in $V_{\mathrm{q}}$, then by flowing it with $\mathcal{S}$
--this including flowing infinite time to its limit--
we will never arrange the appropriate coefficients to be zero so it would belong to $V_\mathrm{q}$. In particular, the union of all flow lines converging to
diagonal traceless matrices with simple spectrum equal the upper triangular traceless matrices with simple spectrum.
 \end{enumerate}

\end{theorem}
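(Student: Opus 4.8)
The plan is to exhibit $\mathcal{S}$ as an isospectral, norm-decreasing flow and to read off all four items from one Lyapunov function together with the linearization at the symmetric locus. I begin with the algebraic features. Writing $B(X):=\pi_\uu[X,X^T]$, the field is in Lax form $\mathcal{S}(X)=[X,B(X)]$, so along a trajectory $X(t)=\Ad_{g(t)}X(0)$ with $\dot g\,g^{-1}=-B(X(t))$; hence every adjoint orbit, and in particular every regular hyperbolic orbit, is preserved. For the Hessenberg-type subspaces I would use that each $V_\p$ is an $\ad(\uu)$-submodule, $[\uu,V_\p]\subseteq V_\p$, a fact checked entrywise from the staircase pattern exactly as for ordinary Hessenberg matrices. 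Since $B(X)\in\uu$ (its diagonal is that of the traceless matrix $[X,X^T]$, so $B(X)$ is traceless upper triangular) and $X\in V_\p$, the value $\mathcal{S}(X)=[X,B(X)]$ lies in $[\uu,V_\p]\subseteq V_\p$, so $V_\p$ is invariant.

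The analytic engine is the Frobenius norm $\|X\|^2=\mathrm{tr}(XX^T)$. Using the identity $\mathrm{tr}(\pi_\uu(S)\,S)=\|S\|^2$ for symmetric $S$, applied to the symmetric matrix $S=[X,X^T]$, one finds $\frac{d}{dt}\tfrac12\|X\|^2=-\mathrm{tr}\!\big(B(X)[X,X^T]\big)=-\|[X,X^T]\|^2\le 0$. Thus forward orbits stay in a fixed ball and the flow is complete for positive time, and the derivative vanishes precisely when $X$ is normal; on the open set of simple real spectrum a normal matrix is symmetric, so the zero set is exactly the set $\pp$ of symmetric traceless matrices intersected with that open set. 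By LaSalle's invariance principle every forward $\omega$-limit set lies in this equilibrium set, and once the normal attractivity below is in hand each orbit converges to a single symmetric matrix, giving item (i).

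To obtain normal hyperbolicity I would linearize at $L\in\pp$. With $Y^-:=\tfrac12(Y-Y^T)$ the expansion $[L+\epsilon Y,(L+\epsilon Y)^T]=-2\epsilon[L,Y^-]+O(\epsilon^2)$ yields $D\mathcal{S}_L(Y)=-2[L,\pi_\uu[L,Y^-]]$, which kills $\pp=T_L\Sigma$ and depends only on $Y^-$. The induced endomorphism $\bar{\mathcal L}$ of the normal space $\kk$ (antisymmetric matrices) satisfies, by the same trace identity, $\langle\bar{\mathcal L}Y^-,Y^-\rangle=-2\|[L,Y^-]\|^2$, which is negative definite because $L$ is regular (an antisymmetric matrix commuting with a symmetric matrix of simple spectrum vanishes). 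Hence $\Sigma$ is a normally attracting, normally hyperbolic manifold of equilibria; this promotes the LaSalle argument to genuine convergence and makes the endpoint map $\Pi(X):=\lim_{t\to+\infty}\phi_t(X)$ a smooth submersion onto $\Sigma$, its fibers being the flow lines with a common limit (smoothness of the projection along the stable foliation is standard normally hyperbolic invariant manifold theory). This is items (i) and (ii). Since the flow preserves each regular orbit and each $V_\p$, restricting $\Pi$ gives the submersions of item (iii).

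Item (iv) is where the Lie-theoretic structure is decisive, and I expect the interplay of stable manifolds with $V_\p$ to be the \emph{main obstacle}. The fiber $\Pi^{-1}(L)$ is the stable manifold of $L$, tangent at $L$ to the stable subspace $E^s_L$. The key remark is that for $L\in V_\p$ the whole image of $D\mathcal{S}_L$ lies in $V_\p$: since $\pi_\uu[L,Y^-]\in\uu$ and $L\in V_\p$, the value $D\mathcal{S}_L(Y)=-2[L,\pi_\uu[L,Y^-]]$ lies in $[\uu,V_\p]\subseteq V_\p$. Every stable eigenvector $v$ obeys $v=\lambda^{-1}D\mathcal{S}_L(v)$ with $\lambda\neq 0$, so $E^s_L\subseteq\operatorname{im}D\mathcal{S}_L\subseteq V_\p$; together with invariance of $V_\p$ and uniqueness of the local stable manifold this forces $\Pi^{-1}(L)\subseteq V_\p$. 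The splitting $V_\p=(\pp\cap V_\p)\oplus E^s_L$ then matches the fibers inside $V_\p$ with the ambient ones. The closing statement about upper triangular matrices is the empty-profile case $V_\p=\uu$, where the symmetric elements are the diagonal ones and the dynamics reduces to the scalar decay already visible in the $2\times 2$ Borel; so the union of flow lines limiting to diagonal matrices is precisely $\uu$. The remaining delicate points are the global convergence (that each orbit reaches the neighborhood where normal hyperbolicity applies) and the smoothness of $\Pi$, both resting on the normal attractivity established above.
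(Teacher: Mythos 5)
Your proposal is correct and its overall architecture coincides with the paper's: invariance of adjoint orbits and of the subspaces $V_\p$ from the Lax form together with $B(X)\in\uu$ and $[\uu,V_\p]\subseteq V_\p$ (Lemma \ref{lem:hypbasic}(1)); the decreasing proper norm to get completeness, locate limit sets in the normal locus, and identify the zero set with the regular symmetric matrices (Lemma \ref{lem:hypbasic}(2)--(4)); normal hyperbolicity of the manifold of equilibria plus the Fenichel stable foliation to produce the smooth submersion; and a dimension count to show that fibers over points of $V_\p$ stay in $V_\p$ (Theorem \ref{thm:Hess-submersion}). The one genuinely different ingredient is your proof of the spectral condition: the paper (Proposition \ref{pro:negeigen}) computes $\left((\nabla\mathcal{S})_{H^k}\right)^2=-2\alpha(H)^2(\nabla\mathcal{S})_{H^k}$ on each root space, exhibiting the explicit negative eigenvalues $-2\alpha(H)^2$, whereas you show the quadratic form $\langle D\mathcal{S}_L Y^-,Y^-\rangle=-2\|[L,Y^-]\|^2$ is negative definite on $\kk$, using regularity of $L$. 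Your route is more elementary and suffices here (the equilibrium manifold has trivial tangential dynamics, so negative real parts of the normal spectrum is all that normal hyperbolicity requires), but it yields no explicit rates and, being tied to the trace form and transposition, is genuinely an $\sl(n,\R)$ argument; the paper's computation is carried out for arbitrary non-compact real semisimple $\gg$ and then specialized.

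Two small points deserve a word more. In item (iv), from $E^s_L=\operatorname{im}D\mathcal{S}_L\subseteq V_\p$ and invariance of $V_\p$ you obtain a stable fiber through $L$ \emph{inside} $V_\p$ of the same dimension as the ambient one; to conclude $\Pi^{-1}(L)\subseteq V_\p$ you still must rule out points of the ambient fiber outside $V_\p$, which follows because $\Pi^{-1}(L)\cap V_\p$ is a nonempty closed submanifold of full dimension in the connected manifold $\Pi^{-1}(L)$, hence equals it --- this is precisely the rank/dimension argument in the proof of Theorem \ref{thm:Hess-submersion}. Finally, the monotone proper Lyapunov function only gives forward completeness, as you note; the paper makes the same leap to ``complete,'' and forward completeness is all that items (i)--(iv) actually use.
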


Work by David Mart\'inez Torres was supported by Faperj and work by Carlos Tomei was supported by Capes and Faperj.

The authors are grateful to the anonimous referee for his/her thorough analysis of the manuscript and  valueble suggstions.


\section{The $\LU$ factorization, the opposite Iwasawa rulings and the action of the Weyl group}\label{sec:lu}

The $\L\U$ factorization asserts that a matrix in the special orthogonal group is the product of a unit
lower triangular matrix and an upper triangular matrix
if and only if its principal minors are nonzero. Moreover, the factorization is unique and depends smoothly (rather, analytically) on the given matrix. The same statement
holds for an $\U\L$ factorization.

Our purpose in this section is to generalize this factorization to other Lie groups and to describe its interaction with the Weyl group
action. Whereas the factorization may be known to experts, the analysis of its interaction with the Weyl group is new to the best of our knowledge. 
For the sake of our application to the Toda vector field $\cT$ we shall
opt for the $\U\L$ factorization.

Upper case will be used for Lie algebra elements, lower case for Lie group elements.
A reference for real semisimple Lie groups is \cite[Chapters VI and VII]{K}. Most
of the facts we use can be found in the much shorter
\cite[Section 2]{DKV}.

Fix once and for all the following data.
\begin{itemize}
 \item A non-compact real semisimple Lie algebra $\gg$ and a group $\G$ with finite center which integrates $\gg$; for instance,  the adjoint group of $\gg$.
 \item A Cartan involution $\Theta:\G\to \G$ with fixed point set the maximal compact subgroup $\K$. The induced involution $\theta$ on the Lie algebra gives rise to the direct sum decomposition into $+1$ and $-1$ eigenspaces
 \[\gg=\kk\oplus \pp\]
 which satisfy
 \begin{equation}\label{eq:cartan-decomposition}
[\kk,\kk]\subset \kk,\quad [\kk,\pp]\subset \pp.
 \end{equation}
 \item A maximal abelian subalgebra $\aa$ contained in $\pp$ and a root ordering yielding an Iwasawa decomposition
 at the Lie algebra and Lie group levels:
 \[\gg=\kk\oplus \aa\oplus \nn,\quad \G=\K\A\N.\]
\end{itemize}
The group $\G$ generalizes the special linear group; there, the Cartan involution is given by taking the inverse of the transpose of a matrix.
The groups $\K$, $\A$ and $\N$ generalize the special orthogonal group, the group of determinant one diagonal matrices with strictly positive entries
and the group of unit upper triangular matrices, respectively. The Iwasawa decomposition is the generalization
of the Gram-Schmidt algorithm (applied on the columns of a matrix). The generalization of the group
of upper triangular matrices with strictly positive diagonal entries is the solvable group $\U=\A\N$.
Unit lower triangular matrices generalize to the nilpotent group $\overline{\N}=\Theta\N$. The Cartan involution
applied to the Iwasawa decomposition produces the opposite Iwasawa decomposition
$\G=\K\A\overline{\N}$.

We introduce the subset which  generalizes matrices with strictly positive \footnote{To obtain a factorization result for the
analogs of matrices with nonzero principal minors one should replace $\A$ by its centralizer in $\K$.}
minors (which we view equivalently as matrices with strictly positive minors along the anti-diagonal, as we favor
the $\U\overline{\N}$ factorization).

\begin{definition}\label{def:Chevalley-big-cell}
The Chevalley big cell $\cC\subset \K$ is the image of $\N$ by the first projection of the opposite Iwasawa decomposition
\begin{equation}\label{eq:projection-to-Chevalley}  \N\subset \G=\K\A\overline{\N}\to \K.
 \end{equation}
\end{definition}

\begin{lemma}\label{lem:factorization}  The cell $\cC$ is the subset of elements of
$\K$ admitting a $\U\overline{\N}$ factorization.
The factorization is unique if and only if
(\ref{eq:projection-to-Chevalley})
is a bijection onto its image $\cC$.
\end{lemma}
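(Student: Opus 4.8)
The plan is to reduce the entire statement to soft manipulations inside $\G$, relying on only two structural facts: that $\A$ normalizes both $\N$ and $\overline{\N}$ (each being a product of root spaces on which $\A$ acts), and that the multiplication maps $\K\times\A\times\N\to\G$ and $\K\times\A\times\overline{\N}\to\G$ are diffeomorphisms, so that the Iwasawa and opposite Iwasawa components are unique; in particular $\A\cap\N$, $\A\cap\overline{\N}$ and $\N\cap\overline{\N}$ are trivial. Writing $c:\N\to\K$ for the projection $(\ref{eq:projection-to-Chevalley})$, whose image is $\cC$ by definition, I would prove the first assertion by identifying $\cC$ with $\K\cap\U\overline{\N}$, and the second by exhibiting both the fibers of $c$ and the sets of factorizations as cosets of two subgroups that vanish simultaneously.

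For the first assertion: if $k\in\cC$, choose $n\in\N$ with $n=ka\bar n$ (opposite Iwasawa), so that $k=n a^{-1}\,(a\bar n^{-1}a^{-1})$; since $\A$ normalizes $\N$ and $\overline{\N}$, the first factor lies in $\A\N=\U$ and the second in $\overline{\N}$, giving a $\U\overline{\N}$ factorization of $k$. Conversely, if $k=u\bar n$ with $u\in\U$, write $u=n''a''$ with $n''\in\N$ and $a''\in\A$; then $n''=k\,(a'')^{-1}\,(a''\bar n^{-1}(a'')^{-1})$ is an opposite Iwasawa decomposition of $n''\in\N$ whose $\K$-part is $k$, so $k=c(n'')\in\cC$. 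Hence $\cC$ is precisely the set of $k\in\K$ admitting a $\U\overline{\N}$ factorization.

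For the second assertion, I would note that for $k\in\cC$ the set of its $\U\overline{\N}$ factorizations is a torsor under $\U\cap\overline{\N}$: from $u_1\bar n_1=u_2\bar n_2$ one reads $u_2^{-1}u_1=\bar n_2\bar n_1^{-1}\in\U\cap\overline{\N}$, and conversely any $g\in\U\cap\overline{\N}$ sends $(u,\bar n)$ to $(ug^{-1},g\bar n)$. Dually, fixing $n_0\in c^{-1}(k)$ and using that $\A\overline{\N}$ is a group (as $\A$ normalizes $\overline{\N}$), a one-line computation gives $c^{-1}(k)=n_0\,(\N\cap\A\overline{\N})$, so $c$ is injective iff $\N\cap\A\overline{\N}=\{e\}$. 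It then remains to see that $\U\cap\overline{\N}=\{e\}$ iff $\N\cap\A\overline{\N}=\{e\}$, a short symmetric argument: given $e\neq an\in\A\N\cap\overline{\N}$, the element $n=a^{-1}(an)$ lies in $\N\cap\A\overline{\N}$ and is nontrivial because $\A\cap\overline{\N}=\{e\}$, and symmetrically from an element of $\N\cap\A\overline{\N}$ one produces a nontrivial element of $\U\cap\overline{\N}$. Combining these, uniqueness of the factorization is equivalent to injectivity of $c$, i.e. to $(\ref{eq:projection-to-Chevalley})$ being a bijection onto $\cC$.

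The routine content here is the bookkeeping with the normalizations of $\N$ and $\overline{\N}$ by $\A$. The one genuinely substantive step, which I would isolate as a separate claim if needed, is the equivalence $\U\cap\overline{\N}=\{e\}\iff\N\cap\A\overline{\N}=\{e\}$: this is exactly what matches ``uniqueness of the factorization'' with ``bijectivity of the projection'', and is the place where the centralizer phenomenon flagged in the footnote (the failure of $\A$ to be its own centralizer in $\K$) would manifest.
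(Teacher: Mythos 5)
Your proposal is correct and follows essentially the same route as the paper: both directions of the first assertion are proved by the same rearrangement of the opposite Iwasawa factors using that $\A$ normalizes $\N$ and $\overline{\N}$ and that $\U=\A\N=\N\A$, and the uniqueness statement is reduced to the same correspondence between fibers of the projection and $\U\overline{\N}$ factorizations. The only cosmetic difference is that you package the two ambiguities as torsors under the subgroups $\U\cap\overline{\N}$ and $\N\cap\A\overline{\N}$ and check these vanish simultaneously, whereas the paper transfers uniqueness directly through the assignment $n\mapsto (na)\overline{n}$; both arguments are sound.
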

\begin{proof}
Factor $g\in \N$  as $g=k(g)a(g)\bar{n}(g)=ka\overline{n}$  according to the opposite Iwasawa decomposition $\G=\K\A\overline{\N}$.
By construction, $k\in \cC$ and $k=g (a\overline{n})^{-1}$.
Since $\A$, $\overline{\N}$ and $\A\overline{\N}$ are groups, they are invariant by inversion: $ A\overline{\N}=\overline{\N}A$ and
 $a\overline{n}=\overline{n}_1a_1$. Using $\U=\A\N=\N\A$, \[k=g{a_1}^{-1}{\overline{n}_1}^{-1}\in \N\A\overline{\N}=\U\overline{\N}.\]

Conversely,
if $k\in \K$ admits a factorization
\[k=u\overline{n}=na\overline{n},\quad n\in \N,\,a\in A,\,\overline{n}\in \overline{\N},\]
then
\[n=k(a\overline{n})^{-1}=ka_1^{-1}{\overline{n}_1}^{-1}\] is an opposite Iwasawa factorization and $k$ is the first projection of $n\in \N$. Hence
the assignment $k\mapsto n$ defined above is a right inverse to  (\ref{eq:projection-to-Chevalley}).

The first conclusion is that $\cC$ is the subset of elements in $\K$  which admit a $\U\overline{\N}$ factorization.
As for uniqueness,
if the projection (\ref{eq:projection-to-Chevalley}) is a bijection
then the first factor in the factorization $k=na\overline{n}$ is uniquely determined by $k$, and so is $a\overline{n}\in A\overline{\N}$, and then also  the $\overline{\N}$ factor in this product. Therefore the $\U\overline{\N}$ factorization is unique.
Conversely,
if the $\U\overline{\N}$ factorization of $k\in \cC$ is unique, then different elements in the fiber of $k$ in (\ref{eq:projection-to-Chevalley})
 would give rise to factorizations
$k=(na)\overline{n}=(n_2a_2)\overline{n}_2$ with different factor in $\U$, contradicting  uniqueness.
\end{proof}

To discuss the uniqueness and smoothness of the $\U\overline{\N}$ decomposition
we need a more detailed description of how $\cC$ sits in $\K$.
Let $\M\subset \M'\subset \K$ be the centralizer and normalizer of $\aa$ in $\K$, respectively. We would like to prove 
the following property.
\begin{enumerate}[(i)]
\item The restriction of the projection $\N\subset \K\A\overline{\N}\to \K$
is a diffeomorphism onto $\cC$ and $\cC$ is a section of the submersion
$\K\to \K/\M'$.
\end{enumerate}
A weaker version of (i) where $\M'$ is replaced by $\M$ follows from applying the Cartan involution to Lemma 7.1 in
\cite{DKV} with $w$ equal to the identity. We present a different proof of (i) which takes advantage of the affine geometry of the Iwasawa rulings. This
geometric manifestation of the Iwasawa decomposition  will also be at the hearth of our construction of coordinates in Section \ref{sec:atlas}.

The linear analog of (i) is the following.
\begin{enumerate}[(i)]
 \item[(ii)] The restriction of the first projection $\nn\subset \kk\oplus \aa \oplus \overline{\nn}$ is a monomorphism and its
 image is complementary to
 the Lie algebra $\mm\subset \kk$ of $\M$.
\end{enumerate}
Statement (ii) is true. Indeed, the restriction to  $\nn$ of the projection parallel to $\aa\oplus \overline{\nn}$ is the monomorphism
\[X\mapsto X+\theta X,\quad X\in \nn.\]
with image given by
the orthogonal complement of $\mm$ in $\kk$ with respect to the restriction of the Killing form (see for example \cite[Equation (3.5)]{DKV}).

There is a third statement which ``interpolates'' between (i) and (ii). Let $H\in \aa$ be a regular element in the positive Weyl chamber
and let $\cO^\G$ be the adjoint orbit through $H$.
The orbit $\cO^\G$ has two preferred rulings. The corresponding affine subspaces which contain  $H$  are
\[H+\nn=\{H^n\,|\,n\in \N\},\quad H+\bar{n}=\{H^{\bar{n}}\,|\, \bar{n}\in \overline{\N}\},\]
where an element of $\G$ used as a superscript stands for conjugation by that element.
Both affine subspaces are contained in $\cO^\G$ and the action of $\K$ spreads them into rulings of $\cO^G$ -- the Iwasawa ruling and the opposite
Iwasawa ruling (see \cite{M} for details on the Iwasawa rulings).
The third statement alluded to is
\begin{enumerate}
 \item[(iii)] The Iwasawa rulings are everywhere complementary. More specifically, given a fiber
 in the Iwasawa ruling and a fiber in the opposite Iwasawa ruling, their intersection is either empty or a point.
\end{enumerate}

\begin{proposition}\label{pro:comp-rulings} The statements (i), (ii) and (iii) above are equivalent.
\end{proposition}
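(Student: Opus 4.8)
The plan is to read the three statements as a single transversality phenomenon recorded at three levels---infinitesimal in (ii), affine-geometric in (iii), and global on the group in (i)---and to prove the two easy downward implications by differentiation and the substantive upward ones by exploiting that the leaves of both rulings are honest affine subspaces of $\gg$. I begin with (i)$\Rightarrow$(ii) and (iii)$\Rightarrow$(ii). The differential at the identity of the projection $\N\subset\K\A\overline{\N}\to\K$ is exactly the first projection $\nn\subset\kk\oplus\aa\oplus\overline{\nn}$ of (ii); so if (i) holds and this projection is a diffeomorphism onto $\cC$, its differential at $e$ is an isomorphism onto $T_e\cC$, and since $\cC$ is a section of $\K\to\K/\M'$ and $T_e\M'=\mm$, that tangent space is complementary to $\mm$, which is (ii). For (iii)$\Rightarrow$(ii) one differentiates the reference leaves $H+\nn$ and $H+\overline{\nn}$ at $H$: complementarity of the rulings at the base point is the directness of $\nn\oplus\overline{\nn}$, which unwinds to (ii).

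The real content is the upgrade from the infinitesimal statement to the global ones, and the organizing observation is that every leaf of either ruling is an affine subspace of $\gg$. The reference leaves are $H+\nn$ and $H+\overline{\nn}$, and a general leaf is $\Ad_k(H+\nn)=\Ad_k H+\Ad_k\nn$, respectively $\Ad_k(H+\overline{\nn})=\Ad_k H+\Ad_k\overline{\nn}$. Hence the intersection of an Iwasawa leaf with an opposite leaf is either empty or an affine subspace with direction $\Ad_{k_1}\nn\cap\Ad_{k_2}\overline{\nn}$. To prove (ii)$\Rightarrow$(iii) I would show that wherever two such leaves meet, their directions are complementary, so the intersection is zero-dimensional and thus a single point. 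By $\K$-homogeneity this reduces to the reference leaf $H+\nn$: each of its points is $H^n$ for a unique $n\in\N$, and factoring $n$ by the opposite Iwasawa decomposition $\G=\K\A\overline{\N}$ exhibits $H^n$ as lying on the opposite leaf through the Chevalley projection $k(n)$. Matching intersection points with the $\U\overline{\N}$ factorization through Lemma \ref{lem:factorization} then identifies the meeting locus with $\cC$ up to the centralizer $\M$, on which (ii) forces the directions to stay complementary.

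The remaining implication (ii)$\Rightarrow$(i), equivalently (iii)$\Rightarrow$(i), reassembles these single-point intersections into the global diffeomorphism $\N\to\cC$ together with the section property. That the projection is a diffeomorphism onto $\cC$ follows once it is seen to be an injective immersion with locally closed image: the immersion property comes from (ii) by homogeneity, injectivity from the single-point intersections of (iii), and an algebraicity or properness argument promotes this to a diffeomorphism. The section statement over $\K/\M'$ is then the assertion that $\cC$ meets each $\M'$-coset exactly once, which is where the normalizer, rather than the centralizer $\M$, must be tracked.

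I expect the main obstacle to be precisely the degenerate directions, namely the $k\in\K$ for which $\nn\cap\Ad_k\overline{\nn}\neq 0$. These are controlled by the Weyl group $\M'/\M$: at such positions the two rulings become parallel rather than transverse, and the force of (iii) there is that the corresponding leaves are in fact disjoint, so no genuine intersection is lost. Pinning down this locus, and understanding how $\M'$ (not merely $\M$) organizes the section in (i), is exactly the interaction with the Weyl group that the affine geometry of the Iwasawa rulings is designed to capture, and it is where the substantive work lies.
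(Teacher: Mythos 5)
Your overall picture --- reading (i), (ii), (iii) as one transversality statement seen at the group, infinitesimal and affine levels, reducing to the reference leaf $H+\nn$ by $\K$-homogeneity, and isolating the Weyl-group translates of $H+\overline{\nn}$ as the crux --- is the same as the paper's, but three of the steps you rely on are either misstated or left undone. First, deriving (ii) from (iii) via ``the directness of $\nn\oplus\overline{\nn}$'' does not work: $\nn\cap\overline{\nn}=0$ is automatic from the root space decomposition and carries none of the content of (ii), whose substance is that the image of $\nn$ under the projection onto $\kk$ parallel to $\aa\oplus\overline{\nn}$ is complementary to $\mm$. The correct unwinding passes through the differential of the opposite-ruling projection $\cO^\G\to\cO$ at $H$ and the identifications $T_H\cO^\G\cong\gg/(\mm\oplus\aa)$ and $T_H\cO\cong\kk/\mm$; it is there, not in $\nn\cap\overline{\nn}$, that $\mm$ enters. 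Second, and more seriously, your step ``(ii) forces the directions to stay complementary'' at a general point $H^n$ of the leaf is unsupported: (ii) is a statement at $H$, and you need a mechanism to transport it along $H+\nn$. The paper's device is to conjugate the entire Iwasawa decomposition by $n\in\N$: since $\N^n=\N$ the Iwasawa leaf is unchanged, while the opposite leaf through $H^n$, namely $H^n+\overline{\nn}^{k(n)}$, equals $H^n+\overline{\nn}^{n}$ because $\A\overline{\N}$ normalizes $\overline{\nn}$; so the two leaves through $H^n$ are the reference leaves of the conjugated decomposition, and (ii), being natural under $\Ad$, applies verbatim. Without some such argument your (ii)$\Rightarrow$(iii) is incomplete.

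Third, the two points you yourself flag as ``where the substantive work lies'' are exactly what the proposition needs, and they can be dispatched quickly rather than deferred. Disjointness of $H+\nn$ from $H^w+\overline{\nn}^w$ for $w\neq e$ follows because a nonempty intersection of two affine subspaces is an affine subspace whose direction here is $\nn(\overline{w})=\nn\cap\overline{\nn}^w$, of dimension equal to the length of $w$ and hence positive, contradicting (iii); this is precisely what makes $\cC$ a section over $\K/\M'$ and not merely over $\K/\M$. Finally, no ``algebraicity or properness argument'' is needed to promote an injective immersion to a diffeomorphism: the paper sidesteps this by reading (i) directly as the statement that $H+\nn$ is a section of the affine bundle $\cO^\G\to\cO$ over its image which misses the Weyl translates of $H+\overline{\nn}$; since all leaves are affine and the $\K$-action is linear, this is a pointwise intersection condition and is therefore equivalent to (iii) with no further analytic input.
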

\begin{proof}

Let $\phi:\N\to \K/\M'$  be the composition of (\ref{eq:projection-to-Chevalley}) with the submersion $\K\to \K/\M'$.
The manifold of full flags \[\cO=\{H^k\,|\,k\in \K\}\]
is a section to both rulings (and thus reduces the structural group from affine to vector bundle). Let $\cO^\G\to \cO$ be the bundle
projection with respect to the opposite Iwasawa ruling.  The composition of the diffeomorphism $\N\to H+\nn$, $n\mapsto H^n$, with
$\cO^\G\to \cO$ is the map $n\mapsto H^{k(h)}$, where we are considering the first component of the opposite Iwasawa
factorization $\G=\K\A\overline{\N}$. Therefore its composition with the diffeomorphism $\cO\to \K/\M$, $H^{k\M}\mapsto k\M$, followed
by the projection $\K/\M\to \K'/\M'$ is exactly $\phi$. Thus (i) holds if and only if $\phi$ is a diffeomorphism over its image

The latter assertion holds if and only if
$H+\nn$ is a section to $\cO^\G\to \cO$ which misses the fibers in the orbit of $H+\overline{\nn}$ under the action
of the Weyl group $\W=\M'/\M$. Since $H+\nn$ and all fibers of $\cO^\G\to \cO$
are affine subspaces of $\gg$, this is equivalent to $H+\nn$ meeting every opposite fiber either in  a point or in the empty set, where
the latter case includes the Weyl group translates of $H+\overline{\nn}$.
The adjoint action of $\K$ on $\gg$ is linear, and,  thus,  affine. Since it preserves the opposite Iwasawa ruling and spreads the fiber
$H+\nn$ into the Iwasawa ruling, both rulings are complementary if and only if $H+\nn$ is complementary to the fibers of the opposite ruling.
Therefore statement (i) implies statement (iii).

To prove that (iii) implies (i) it remains to show that the fiber $H+\nn$ does not intersect the fiber
$H^w+\overline{\nn}^w$, where $w$ is a nontrivial element in the Weyl group\footnote{We shall abuse notation and write $w$
also for a representative in $\M'$ of the class $w\in \M'/\M$. This choice will not affect the results because $w$ will be used
to conjugate Lie subgroups or subalgebras which are either centralized or normalized by $\M$.}. Both fibers are affine spaces and therefore
its intersection must be an affine space with associate vector space the subalgebra $\nn(\overline{w})=\overline{\nn}^w\cap\nn$. The dimension
of $\nn(\overline{w})$ --the number of positive roots that $w^{-1}$ takes to negative roots-- is also the length of $w$ \cite[Lemma 2.2]{BGG}.
Since $w$ is nontrivial, its dimension is positive and
  $\nn(\overline{w})$ is also nontrivial,  contradicting (iii).

To check  (iii) at $H$ we must show that $H+\nn$ and $H+\bar{\nn}$  intersect just
at $H$. Since $\bar{\nn}$ is the tangent space to the opposite fiber at $H$, statement (iii) at $H$ is equivalent to the restriction
to $\nn$ of  the differential
of the  projection $\cO^\G\to \cO$ being a monomorphism:
$\nn\subset T_H\cO^\G\to T_H\cO$.
Under the canonical identifications $T_H\cO^\G\cong \gg/(\mm\oplus \aa)$ and $T_H\cO\cong \kk/\mm$, the restriction to $\nn$
of the composition of the quotient map $\gg\to \gg/(\mm\oplus \aa)$ with the differential, is the restriction to $\nn$ of the
projection of $\gg$ onto $\kk$ parallel to $\aa\oplus \nn$, followed by the quotient map $\kk\to \kk/\mm$.
Therefore (iii) at $H$ is equivalent to (ii).

Conversely,  we will show that (iii) at another point $H+\nn$ is again (ii) but for a different (conjugated)
Iwasawa decomposition. Let $X\in H+\nn$. We can write $X=H^n$ for a unique $n\in \N$.
The point $H^n\in H+\nn$ belongs to the opposite fibre  $H^{k(n)}+{\bar{\nn}}^{k(n)}$ based at $H^{k(n)}\in \cO$,
where we use as usual the opposite
Iwasawa factorization $\K\A\overline{\N}$.
We want to show that
\begin{equation}\label{eq:transverse}
 (H+\nn)\cap (H^n+\bar{\nn}^{k(n)})=H^n.
\end{equation}
Conjugate the fixed Iwasawa decomposition and its opposite one by $n$ to obtain
$\G=\K^n\A^n\overline{\N}^n$,  ${\N^n}=\N$.
Observe that now $H^n\in \aa^{n}$. Its adjoint orbit is still $\cO^\G$ but the compact one changes to ${\cO}^n$.
At $H^n$ the fibers of the conjugated Iwasawa opposite conjugated Iwasawa rulings are
\[H^n+\nn^n=H^n+\nn=H+\nn,\]
\[ H^n+\overline{\nn}^n=H^n+{\bar{\nn}}^{k(n)a\overline{n}(n)}=H^n+{\bar{\nn}}^{k(n)}.\]
These are the affine subspaces in (\ref{eq:transverse}). Therefore (\ref{eq:transverse}) is a consequence of (ii) applied
to $\K^n\A^n \N$ and its opposite decomposition $\K^n\A^n\overline{\N}^n$.
\end{proof}

\begin{corollary}\label{cor:smooth-factorization} The big Chevalley cell $\cC\subset \K$ is a real analytic immersed
submanifold with the following properties.
\begin{enumerate}
 \item Its projection is a diffeomorphism onto an open subset of $\K/\M'$.
\item It consists
of the elements in $\K$ which posses a $\U\overline{\N}$ or, equivalently, a $\overline{\N}\U$ factorization and thus it is closed under taking inverses.
The factorization is unique,
the factors depend in a
real analytic fashion on $\cC$, and the assignment
\begin{equation}\label{eq:un-diffeo}
 f:\cC\to \overline{\N},\quad k\mapsto \overline{n}(k),
\end{equation}
 where $\overline{n}(k)$ is the unique $\overline{N}$-component of $k$ in its $\overline{\N}\U$ factorization,
 is a real analytic diffeomorphism.
\end{enumerate}
\end{corollary}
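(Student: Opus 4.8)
The plan is to read off every assertion from Proposition \ref{pro:comp-rulings} together with Lemma \ref{lem:factorization}: since statement (ii) has just been verified, all three statements (i), (ii), (iii) hold, and statement (i) carries most of the weight. First I would extract the manifold structure and property (1) from (i). The projection (\ref{eq:projection-to-Chevalley}) $p\colon\N\to\K$ is real analytic, being the restriction to $\N\hookrightarrow\G$ of the first factor of the opposite Iwasawa diffeomorphism $\K\times\A\times\overline{\N}\to\G$. By (i) it is a bijection onto $\cC$, and writing $\phi=q\circ p$ for its composite with the submersion $q\colon\K\to\K/\M'$, statement (i) (as established in the proof of Proposition \ref{pro:comp-rulings}) says that $\phi$ is a diffeomorphism onto an open subset $U\subset\K/\M'$. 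From $dq\circ dp=d\phi$ injective I conclude that $dp$ is injective, so $p$ is an injective real analytic immersion and $\cC=p(\N)$ is a real analytic immersed submanifold of $\K$. Then $q|_\cC=\phi\circ p^{-1}$ is a composite of diffeomorphisms onto $U$, which is property (1).

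For the factorization claims I would first invoke Lemma \ref{lem:factorization}: $\cC=\U\overline{\N}\cap\K$ is exactly the set of elements of $\K$ admitting a $\U\overline{\N}$ factorization, and uniqueness follows either from the bijectivity in (i) via the Lemma or directly from $\U\cap\overline{\N}=\{e\}$. To obtain the equivalent $\overline{\N}\U$ factorization and closure under inverses I would use the Cartan involution $\Theta$, an automorphism of $\G$ fixing $\K$ pointwise with $\Theta(\U)=\A\overline{\N}=\overline{\N}\A$ and $\Theta(\overline{\N})=\N$. Applying $\Theta$ to $k=u\overline{n}$ rewrites any $k\in\K$ as an element of $\A\overline{\N}\N=\overline{\N}\U$, so $\U\overline{\N}\cap\K=\overline{\N}\U\cap\K=\cC$; closure under inverses is then immediate from $(u\overline{n})^{-1}=\overline{n}^{-1}u^{-1}\in\overline{\N}\U$.

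The analytic dependence of the factors I would get by retracing the proof of Lemma \ref{lem:factorization}: with $n=p^{-1}(k)$, factoring $n=k\,a\,\overline{n}$ in the opposite Iwasawa decomposition and rewriting $a\overline{n}=\overline{n}_1a_1$ expresses the $\U$- and $\overline{\N}$-factors of $k$ as compositions of the real analytic opposite Iwasawa factorization, the group operations, and the real analytic inverse $p^{-1}$. It remains to show that $f\colon\cC\to\overline{\N}$, $k\mapsto\overline{n}(k)$, is a real analytic diffeomorphism, and here the efficient route is to identify its inverse: given $\overline{n}\in\overline{\N}$, factoring $\overline{n}=k_0 a_0 n_0$ in the \emph{ordinary} Iwasawa decomposition $\G=\K\A\N$ gives $k_0=\overline{n}(a_0 n_0)^{-1}\in\overline{\N}\U\cap\K=\cC$ with $f(k_0)=\overline{n}$ by uniqueness. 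Thus $f^{-1}$ is the restriction to $\overline{\N}$ of the first projection of the ordinary Iwasawa decomposition onto $\K$, that is, the Cartan-involution image of (\ref{eq:projection-to-Chevalley}); statement (i) applied to the $\Theta$-conjugate data asserts precisely that this map is a real analytic diffeomorphism onto $\cC$, so $f$ is one as well. (Injectivity of $f$ can also be checked directly from $\U\cap\K=\{e\}$.) I expect this last step to be the main obstacle: everything else is bookkeeping on top of Proposition \ref{pro:comp-rulings}, whereas recognizing that $\Theta$ interchanges the two factorizations---so that $f^{-1}$ is controlled by the opposite decomposition and statement (i) supplies surjectivity and analyticity of the inverse simultaneously---is the one genuinely structural input.
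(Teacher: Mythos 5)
Your proposal is correct and follows essentially the same route as the paper: item (1) and the smooth/analytic structure are read off from Proposition \ref{pro:comp-rulings} (via the verified statement (ii)) together with Lemma \ref{lem:factorization}, the $\overline{\N}\U$ characterization and closure under inverses come from the Cartan involution, and $f^{-1}$ is identified as the restriction to $\overline{\N}$ of an Iwasawa projection onto $\K$. The only (harmless) difference is bookkeeping: you describe $f^{-1}$ via the first factor of $\G=\K\A\N$, while the paper uses the third factor of $\G=\A\N\K$, reflecting the paper's own mild inconsistency between the $\U\overline{\N}$ and $\overline{\N}\U$ conventions.
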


\begin{proof}
Item (1) follows from Proposition \ref{pro:comp-rulings} and the validity of statement (ii).

If $k=(na)\overline{n}$ is its $\overline{\N}\U$ factorization, then since the Cartan involution 
 $\Theta:\G\to \G$ is the identity on $\K$, interchanges $\N$ and $\overline{\N}$ maps $\A$ onto itself, then
 \[k=\Theta(k)=\Theta(n)\Theta(a)\Theta(\overline{n})\in \overline{\N}\U.\]
Therefore $\cC$ can be equivalently defined as the subset of elements
of $\K$ which admit a $\overline{\N}\U$ factorization. Because the inversion interchanges  $\U\overline{\N}$ and $\overline{\N}\U$, we deduce
that $\cC$ is closed under taking inverses. 

%
%

The uniqueness and real analyticity of the $\U\overline{\N}$ factorization is a consequence of
Proposition \ref{pro:comp-rulings}, the validity of statement (ii), and Lemma \ref{lem:factorization}.
The inverse to $f$ in (\ref{eq:un-diffeo}) is the restriction to $\overline{\N}\subset \G$ of
the projection onto the third factor in $\G=\A\N\K$. 

\end{proof}

Our next purpose it to relate the $\U\overline{\N}$ factorization with the action of the Weyl group. To that end 
we consider the subgroups of $\N$ which integrate the subalgebras  $\nn(\overline{w})=\nn\cap \overline{\nn}^{w}$ and $\nn(w)=\nn\cap \nn^w$:
\[\N(\overline{w})=\N\cap \overline{\N}^w=\{n\in \N\,|\, n^{(w^{-1})}\in \overline{\N}\},\quad \N(w)=\N\cap \N^w=\{n\in \N\,|\, n^{(w^{-1})}\in \N\}.\]
The following factorization is well known (see for instance \cite[Lemma 2.3]{DKV}):
\begin{equation}\label{eq:nilp-factorization}
 \N=\N(w)\N(\overline{w}).
 \end{equation}
The Cartan involution applied to (\ref{eq:nilp-factorization}) gives the factorization 
$\overline{\N}=\overline{\N}(w)\overline{\N}(\overline{w})$.
\begin{definition}\label{def:fact-Weyl}
 The subsets $\cC(w),\cC({\overline{w}})\subset \cC$ are the image of
the restriction of the first Iwasawa projection to $\N(w),\N(\overline{w})\subset  \K\A\overline{\N}\to \K$, respectively. 

By applying the Cartan involution we obtain an equivalent definition of $\cC(w),\cC(\overline{w})$ as the restriction 
of the first projection to $\overline{\N}(w),\overline{\N}(\overline{w})\subset \K\A{\N}\to \K$.
\end{definition}

\begin{proposition} \label{pro:fact-Weyl} We have the equalities of subsets
\begin{eqnarray}\label{eq:factorization-Weyl}
 \nonumber\cC(\overline{w^{-1}}) &= &\{k\in \cC\,|\, k^w=u\overline{n},\,\,u\in \U,\,\,\overline{n}\in \overline{\N}(\overline{w})\},\\
 \cC(w^{-1}) &= &\{k\in \cC\,|\, k^w=u\overline{n},\,\,u\in \U,\,\,\overline{n}\in \overline{\N}(w)\}
\end{eqnarray}
\end{proposition}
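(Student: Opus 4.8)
The plan is to transport everything to the unipotent group $\N$ via the diffeomorphism furnished by Proposition~\ref{pro:comp-rulings}(i) and Corollary~\ref{cor:smooth-factorization}, and then to track how the $\U\overline{\N}$ factorization behaves under conjugation by a Weyl group representative. Write $p\colon\N\to\cC$ for the restriction of the first projection of $\G=\K\A\overline{\N}$; it is a diffeomorphism, and by Definition~\ref{def:fact-Weyl} one has $\cC(\overline{w^{-1}})=p(\N(\overline{w^{-1}}))$ and $\cC(w^{-1})=p(\N(w^{-1}))$. Unwinding the definitions with the convention $x^g=g^{-1}xg$, membership $k\in\cC(\overline{w^{-1}})$ is equivalent to $n^w\in\overline{\N}$, and $k\in\cC(w^{-1})$ to $n^w\in\N$, where $n=p^{-1}(k)$. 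Thus both equalities will follow once we show that the location of the $\overline{\N}$-factor of $k^w$ is governed precisely by where conjugation by $w$ sends $n$.

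First I would record the explicit relation between $k$ and its preimage $n$. From the opposite Iwasawa factorization $n=k\,a(n)\,\overline{n}(n)$ we get $k=n\,(a(n)\overline{n}(n))^{-1}$, and conjugating by $w$ gives
\begin{equation*}
k^w=n^w\,\bigl(\overline{n}(n)^w\bigr)^{-1}\,\bigl(a(n)^w\bigr)^{-1},\qquad a(n)^w\in\A .
\end{equation*}
The next step is to resolve the two conjugated nilpotent factors $n^w$ and $\overline{n}(n)^w$ into their $\N$- and $\overline{\N}$-parts. Using the factorizations $\N=\N(w^{-1})\N(\overline{w^{-1}})$ and $\overline{\N}=\overline{\N}(w^{-1})\overline{\N}(\overline{w^{-1}})$ together with the inclusions $\N(\overline{w^{-1}})^w\subset\overline{\N}$, $\N(w^{-1})^w\subset\N$ and their barred analogues, each conjugated factor splits as a product of an element of $\N$ and an element of $\overline{\N}$. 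I would then commute the $\A$-factor through and regroup, using $\A\overline{\N}=\overline{\N}\A$ and $\A\N=\N\A$, to assemble an honest $\U\overline{\N}$ factorization of $k^w$; uniqueness of this factorization (Corollary~\ref{cor:smooth-factorization}) then pins down the $\overline{\N}$-factor unambiguously.

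The decisive computation is to identify this $\overline{\N}$-factor. The claim is that it lands in $\overline{\N}(\overline{w})=\overline{\N}\cap\N^w$ exactly when the $\N(\overline{w^{-1}})$-part of $n$ is all of $n$ (equivalently $n^w\in\overline{\N}$), and in $\overline{\N}(w)=\overline{\N}\cap\overline{\N}^w$ exactly when $n^w\in\N$; the two cases yield the two displayed equalities. For the forward inclusions one reads the $\overline{\N}$-component off the reassembled factorization and checks that it satisfies the defining conjugation condition of $\overline{\N}(\overline{w})$, resp. $\overline{\N}(w)$; for the reverse inclusions one uses uniqueness to run the identification backwards, recovering the constraint on $n^w$ from the prescribed location of the $\overline{\N}$-factor. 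Here it is convenient to also invoke the equivalent description of $\cC(w),\cC(\overline{w})$ through the opposite decomposition $\K\A\N$ (Definition~\ref{def:fact-Weyl}), which by the Cartan involution interchanges the roles of $\N$ and $\overline{\N}$ and makes the two equalities genuinely dual to one another.

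The main obstacle is exactly this reassembly: conjugation by $w$ does not preserve the dichotomy between $\U$ and $\overline{\N}$, since $w$ carries some positive roots to negative ones---precisely those indexed by $\N(\overline{w^{-1}})$---while fixing the positivity of the rest. Consequently $k^w$ is a priori an interleaved product of $\N$-, $\overline{\N}$- and $\A$-factors, and the heart of the argument is to show that these recombine into a single $\U\overline{\N}$ factorization whose $\overline{\N}$-part is forced into the prescribed subgroup. Controlling this recombination rests on the factorization $\N=\N(w)\N(\overline{w})$, on the fact that $w$ normalizes $\A$ and $\M$, and on the commutation relations $\A\N=\N\A$, $\A\overline{\N}=\overline{\N}\A$; matching the outcome to $\overline{\N}(\overline{w})=\overline{\N}\cap\N^w$ via uniqueness is what closes the proof.
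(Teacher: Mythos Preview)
Your overall strategy---pass to the nilpotent side via the diffeomorphism $p\colon\N\to\cC$, conjugate by $w$, split each nilpotent factor using $\N=\N(w)\N(\overline{w})$ and its variants, then reassemble---is precisely the paper's approach. The gap is in the reassembly step. Starting from the primary description $\cC(\overline{w^{-1}})=p(\N(\overline{w^{-1}}))$ via $\G=\K\A\overline{\N}$, you obtain
\[
k^w \;=\; n^w\,(\overline{n}(n)^w)^{-1}\,(a(n)^w)^{-1}\;\in\;\overline{\N}\cdot\overline{\N}^w\cdot\A,
\]
since $n\in\N(\overline{w^{-1}})$ forces $n^w\in\overline{\N}$. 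Splitting $\overline{\N}^w$ into its $\overline{\N}$- and $\N$-parts and commuting the $\A$-factor through yields an $\overline{\N}\U$ factorization of $k^w$, not a $\U\overline{\N}$ one: the $\overline{\N}$-factors land on the \emph{left}. The relations you invoke, $\A\N=\N\A$ and $\A\overline{\N}=\overline{\N}\A$, do not allow you to move an $\overline{\N}$-factor past an $\N$-factor, which is what ``assemble an honest $\U\overline{\N}$ factorization'' would require. So that step does not go through as written, and uniqueness cannot be invoked to pin down the $\overline{\N}(\overline{w})$-constraint.

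The paper avoids this by making the opposite choice of description at the outset. For the first equality it uses the Cartan-dual characterization in Definition~\ref{def:fact-Weyl}: $\cC(\overline{w^{-1}})$ is the image of $\overline{\N}(\overline{w^{-1}})=\overline{\N}\cap\N^{w^{-1}}$ under the first projection of $\G=\K\A\N$. Starting from $\overline{n}=k\,a\,n$ with $\overline{n}\in\overline{\N}(\overline{w^{-1}})$ and conjugating gives $k^w=\overline{n}^{\,w}a_1^{\,w}n_1^{\,w}\in\N\cdot\A\cdot\N^w$, because now $\overline{n}^{\,w}\in\N$. Splitting $\N^w=(\N^w\cap\N)(\N^w\cap\overline{\N})$ then produces directly a $\U\overline{\N}$ factorization whose $\overline{\N}$-part lies in $\N^w\cap\overline{\N}=\overline{\N}(\overline{w})$. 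Your route can be repaired in one stroke by applying $\Theta$ to the $\overline{\N}\U$ factorization you obtain (since $k^w\in\K$ is $\Theta$-fixed, this turns it into a $\U\overline{\N}$ factorization and sends $\overline{\N}^w\cap\N$ to $\N^w\cap\overline{\N}=\overline{\N}(\overline{w})$), but that use of the Cartan involution is missing from your sketch; you invoke it only to relate the two equalities, not to fix the recombination itself.
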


\begin{proof}
Let $k\in\cC({\overline{w^{-1}}})$. By definition there exists
$\overline{n}\in \overline{\N}(\overline{w^{-1}})=\N^{w^{-1}}\cap \overline{\N}$ which factors as
$\overline{n}=kan=kn_1^{-1}a_1^{-1}$,  
where we have selected inverses of elements in $\N$ and $\A$ for the sake of easing the notation in the equations which follow.
Rewrite the equality as $k=\overline{n}a_1n_1$ and conjugate it by $w$ to obtain
\[k^w=\overline{n}^wa_1^wn_1^w.\]
By hypothesis the first factor is in $\N$. The last one is in $\N^w$.
The result of applying  the factorization (\ref{eq:nilp-factorization}) with $w^{-1}$ in place of $w$ to the subset $\N^w$ of $N$ is
\[
\N^w=\N^w(w^{-1})\N^w(\overline{w^{-1}})=(\N^\w\cap \N)(\N^w\cap \overline{\N}).
\]
Applying the above decomposition to  $n_1^w$, there exists $n_2\in \N^{w}\cap \N$ such that 
 $n_2{n_1}^w\in \N^{w}\cap \overline{\N}$.
We can rewrite
\[k^w=\left(\overline{n}^w{a_1}^w n_2^{-1}\right)\left(n_2{n_1}^w\right).\]
The first factor is in $\N\A\N=\U$ and the second in $\N^w\cap \overline{\N}=\overline{\N}(\overline{w})$, and (\ref{eq:factorization-Weyl}) follows.

Conversely, let $k\in \cC$ such that $k^w=u\overline{n}^{-1}$, $\overline{n}^{-1}\in \N^w\cap \overline{\N}$. We have two factorizations
\[u^{(w^{-1})}=k\overline{n}^{(w^{-1})},\quad u^{(w^{-1})}= n_1a_1^{-1} \in \N^{w^{-1}}\A.\]
Therefore
\[n_1=k\overline{n}^{(w^{-1})}a_1=ka_2n_2\in \K\A\N.\]
Because of the decomposition  $\N^{w^{-1}}=(\N^{\w^{-1}}\cap \N)(\N^{w^{-1}}\cap \overline{\N})$ and because $n_1^{-1}\in \N^{w^{-1}}$,
one finds $n_3$ and $\overline{n}$ such that  $n_1^{-1}=n_3\overline{n}^{-1}$, so that
\[\overline{n}=n_1n_3=ka_2n_2n_3,\]
which is the $\K\A\N$ factorization of $\overline{n}\in \N^{\w^{-1}}\cap \overline{\N}$. Thus
$k\in\cC(\overline{w^{-1}})$.

The proof of the second equality is analogous (use the opposite Iwasawa factorizations to those used to prove the first equality).
\end{proof}

Proposition  \ref{pro:fact-Weyl} for $\gg=\sl(n,\R)$ is equivalent to the real analytic automorphism of subgroups
of unit lower triangular matrices announced in the introduction.

\begin{proof}[Proof of Proposition \ref{pro:fact-Weyl-sl}]
It follows from Corollary \ref{cor:smooth-factorization} that the restriction to $\overline{\N}$ of the first Iwasawa
projection $\G=\K\A\N$ followed by $f:\mathrm{C}\to \overline{\N}$ in  (\ref{eq:un-diffeo}) is a real analytic diffeomorphism
$\Phi: \overline{\N}\to  \overline{\N}$. By Proposition \ref{pro:fact-Weyl} for  each representative of $w\in \W$, the result of
applying the first Iwasawa projection to $\overline{\N}(w^{-1})$, conjugating by the representative of $w$, and composing
with $f$ is a real analytic automorphism
\[\Phi(w):\overline{\N}(w^{-1})\to \overline{\N}(w).\]
Here we used  invariance of domain to conclude
that a bijective real analytic map $\Phi(w)$ is a homeomorphism (alternatively,
one can exhibit the inverse explicitly via appropriate factorizations);
we also abused notation because the automorphism depends on the representative
chosen for $w$.

For $\G=\SL(n,\R)$ the nilpotent group $\N$ are the unit upper triangular matrices. The unit lower triangular matrices --denoted  by $\L$
in the introduction--
correspond to $\overline{\N}$ in the body of the paper. The map $\mathfrak{f}$
in (\ref{eq:ortho-birrat}) in the introduction is the map $f:\mathrm{C}\to \overline{\N}$ above.
For $\SO(n,\R)$, both the $\U\L$ and $\L\U$ factorizations  are obtained by Cramer's rule, which makes clear the birational nature of $\mathfrak{f}$
and the regularity of $\Phi$.

The Weyl group for the special linear group admits canonical representatives given by permutation matrices.
For a permutation $\sigma$, the subgroup
$\overline{\N}(w)$ corresponds to the subgroup $\L(\sigma)$ obtained by intersecting $\L$ with
its conjugation by $\sigma$. Clearly $\L(\sigma)$
amounts to setting to zero certain entries of $\L$, proving item (i).

By Proposition \ref{pro:fact-Weyl}, the not everywhere defined map
$\Phi(\sigma):\L\dashrightarrow \L$ induces a real analytic diffeomorphism
from $\L(\sigma^{-1})$ onto $\L(\sigma)$, as stated in item (ii) in Proposition  \ref{pro:fact-Weyl-sl}.
\end{proof}

\noindent {\bf Example 1.} We describe explicitly the real analytic diffeomorphism in Proposition \ref{pro:fact-Weyl-sl} for $\SL(3,\R)$ and $\sigma=(2,1,3)$.

The application of the Gram-Schmidt algorithm to the Lie algebra of strictly lower diagonal matrices produces the real analytic embedding
\begin{eqnarray}\label{eq:Gram-Schmidt}
 \L &\longrightarrow & \SO(3,\R) \nonumber\\
\begin{pmatrix}
1 & 0 & 0 \\
x & 1 & 0 \\
y & z & 1
\end{pmatrix} &\longmapsto & \begin{pmatrix}
 \frac{1}{\mathit{n_1}}  &  -
\frac{x + yz}{\mathit{n_1}\mathit{n_2}}  &  \frac {xz - y}{\mathit{n_2}}  \\
\frac{x}{\mathit{n_1}}  &  \frac {1+ y^{2}- xyz}{\mathit{n_1}\mathit{n_2}}  &  - \frac{z}{\mathit{n_2}}  \\
 \frac {y}{\mathit{n_1}}  & \frac {z+ zx^{2}}{\mathit{n_1}\mathit{n_2}}  & \frac{1}{\mathit{n_2}}
\end{pmatrix}.
\end{eqnarray}
Here $n_1,n_2\in \R$ are the norm of the first column vector and of the cross product of the first and second column vectors of the
given matrix in $\L$, respectively.

Next, upon composition with the projection onto the strictly lower triangular factor of the $\U\L$ factorization, we obtain the real analytic
diffeomorphism
\begin{eqnarray*}
 \Phi: \L &\longrightarrow & \L \\
\begin{pmatrix}
1 & 0 & 0 \\
x & 1 & 0 \\
y & z & 1
\end{pmatrix} &\longmapsto & \begin{pmatrix}
 1  & 0  &  0  \\
\frac{x+yz}{\mathit{n_2}}  &  1  &  0  \\
 \frac {\mathit{n_2}y}{\mathit{n_1}}  & \frac {z+ zx^{2}-yx}{\mathit{n_1}}  & 1
\end{pmatrix}.
\end{eqnarray*}
If we compose (\ref{eq:Gram-Schmidt}) with the conjugation by a permutation we will not obtain special orthogonal
matrices with strictly positive principal minors.
For instance, the permutation $(2,1,3)$ interchanges the first and second  elements in the diagonal, and the latter need not be different from
zero. Applying then the projection onto the $\L$ factor of the $\U\L$ factorization gives a map which is not everywhere defined,
\begin{eqnarray*}
 \Phi(2,1,3): \L &\dashrightarrow & \L \\
\begin{pmatrix}
1 & 0 & 0 \\
x & 1 & 0 \\
y & z & 1
\end{pmatrix} &\longmapsto & \begin{pmatrix}
 1  & 0  &  0  \\
\frac{\mathit{n_2}x}{-1-y^2+xyz}  &  1  &  0  \\
 \frac {\mathit{n_2}y}{\mathit{n_1}}  & \frac {z+ zx^{2}-yx}{\mathit{n_1}}  & 1
\end{pmatrix}.
\end{eqnarray*}
Indeed, at any solution of $-1-y^2+xyz=0$  --which  also corresponds to an special orthogonal matrix in (\ref{eq:Gram-Schmidt}) with zero $(2,2)$-entry--
the map $\Phi(2,1,3)$ is not defined. The strictly lower triangular matrices which remain lower triangular once conjugated by $(2,1,3)$ (its own inverse)
are those for which the entry $(2,1)$ is zero. The restriction of $\Phi(2,1,3)$ to this subspace is
\begin{eqnarray*}
 \Phi(2,1,3): \L(2,1,3) &\longrightarrow & \L(2,1,3) \\
\begin{pmatrix}
1 & 0 & 0 \\
0 & 1 & 0 \\
y & z & 1
\end{pmatrix} &\longmapsto & \begin{pmatrix}
 1  & 0  &  0  \\
0  &  1  &  0  \\
 \frac {\mathit{n_2}y}{\mathit{n_1}}  & \frac {z}{\mathit{n_1}}  & 1
\end{pmatrix}.
\end{eqnarray*}
This is a real analytic diffeomorphism, as asserted in Proposition \ref{pro:fact-Weyl-sl}.

\section{An atlas for the manifold of full flags and its relation with the Bruhat cells}\label{sec:atlas}

Let  $H\in \aa$ be a regular element in the positive Weyl chamber and let  $\cO=\{H^k\,|\,k\in \K\}$ be the manifold of real full flags. In this section   
we will use the quotient model coming from the canonical diffeomorphism $\cO\cong \K/\M$, $H^{k\M}\mapsto k\M$. Our constructions there should also 
be interpreted as constructions on $\cO$ (for which we will keep the same notation).

There exist a standard \emph{Bruhat atlas} for $\K/\M$ \cite{DKV}. 
A lesser-known open cover of $\K/\M$ was defined in \cite{FH}. Our first purpose in this section 
is to introduce coordinates on the open subsets of the later cover. We will argue that these local coordinates are  natural consequence of how the 
Iwasawa decomposition reflects on the geometry of adjoint orbits: the coordinates will be a byproduct of the 
 $\U\overline{\N}$ factorization discussed in Section \ref{sec:lu} and of the canonical diffeomorphism of (appropriate) Iwasawa fibers with $\overline{\N}$. 
Our second purpose is to discuss the interaction of these local coordinates with the Bruhat and opposite Bruhat cells.

Let $\B=\M\A\N\subset \G$. The Bruhat cells of $\G$ are the double cosets $\B\backslash \G /\B$. 
They are parametrized by the Weyl group and produce the partition $\G=\coprod_{w\in \W}\B w\B$;
the cell parametrized by $w_0$, the longest element of $W$, is open and dense. There is an induced 
partition in Bruhat cells  
$\G/\B=\coprod_{w\in W} \B w\B$ (orbits for the action of $\B$).
Each left translate of the open dense cell contains the corresponding Bruhat cell, and thus the collection  
 $ww_0\B w_0 \B$, $w\in W$, is an open cover of $\G/\B$. 
It produces the Bruhat  cover of $\K/\M$ via the diffeomorphism
\begin{equation}\label{eq:diffeo-Iwasawa}
\G/\B\to \K/\M,\quad g\B\mapsto k(g)\M,\quad \G=\K\A\N.
\end{equation}
More explicitly, if we denote the image of $ww_0\B w_0 \B$ in $\K/\M$ by $\cL_w$, then 
\[\cL_w=k(ww_0\B w_0 \B)\M=wk(w_0\N w_0\M\A\N)\M=wk(\overline{\N})\M=w\cC\M.\]


The subsets we are interested in are right translates of sorts  of $\cL_e$ by $w$.

\begin{definition}\label{def:cover}
For each element of the Weyl group $w\in \W$, let $\cU_w \subset \K/\M$ be
\[\cU_w=\cC w\M=\{{kw}\M\,|\,k\in \cC\}.\]
\end{definition}
The subsets $\cU_w$ were introduced  in \cite[Definition 1, Lemma 1]{FH} for $\G$ a complex semisimple Lie group.


\begin{lemma}\label{lem:cover}
 The collection $\cU_w$, $w\in \W$, defines an open cover of $\K/\M$ by dense subsets diffeomorphic to $\cC$.  Each $\cU_w$ intersects
 the Weyl group orbit of $e\M$ in $w\M$.
\end{lemma}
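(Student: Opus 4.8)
The plan is to deduce all three assertions from two facts: that $\cU_e=\cC\M$ is exactly the open dense cell $\cL_e$ already identified above, and that a single right action of the Weyl group on the flag manifold permutes the opposite big cells transitively.

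First I would record the local structure. For any representative $w\in\M'$ the right translation $R_w\colon\K/\M\to\K/\M$, $k\M\mapsto kw\M$, is well defined and a diffeomorphism, since $\M'$ normalizes $\M$ (so the class $kw\M$ is independent of the chosen representative). As $\cU_w=\cC w\M=R_w(\cU_e)$, it suffices to understand $\cU_e=\cC\M$. Now $\cU_e=\cL_e$ is the image under the projection $q\colon\K\to\K/\M$ of the open dense cell $\overline{\N}\M\A\N=\overline{\B}\B$, where $\overline{\B}=\M\A\overline{\N}$; hence $\cU_e$ is open and dense. Moreover $q$ restricts to a bijection $\cC\to\cU_e$: if $c_1\M=c_2\M$ with $c_i\in\cC$, write $c_1=c_2 m$ with $m\in\M$; since $\cC=\K\cap\overline{\N}\A\N$ by Corollary~\ref{cor:smooth-factorization}, both $c_i$ have trivial $\M$--component in the unique factorization of the big cell $\overline{\N}\M\A\N$, which forces $m=e$. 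Being also an immersion between equidimensional manifolds, $q|_{\cC}$ is a diffeomorphism onto $\cU_e$; applying $R_w$ then shows each $\cU_w$ is open, dense and diffeomorphic to $\cC$.

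The heart of the argument is the covering property, which I would restate as the surjectivity $\K=\cC\M'$. Consider the right action of $\W$ on $\G/\B\cong\K/\M$ given by $g\B\mapsto gw\B$; it is well defined because $\M\subset\B$, and it commutes with the left action of $\overline{\B}$, so it permutes the opposite Schubert cells $\overline{\B}w\B/\B$, $w\in\W$. Evaluating on the base points $w\B$ shows the induced permutation of the index set is right multiplication in $\W$, hence transitive, the open cell being the one indexed by $e$. Therefore every flag $k\B$, say lying in the cell indexed by $w_1$, is carried by $w_1^{-1}$ into the open cell, i.e. $kw_1^{-1}\in\overline{\B}\B=\overline{\N}\M\A\N$. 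Writing $kw_1^{-1}=\overline{n}\,m_0\,a\,n$ and multiplying on the right by $m_0^{-1}$ (which commutes with $\A$ and normalizes $\N$) lands $kw_1^{-1}m_0^{-1}$ in $\K\cap\overline{\N}\A\N=\cC$; thus $k\in\cC\M'$. Since $\M'=\bigsqcup_{w\in\W}w\M$, this yields $\K/\M=\bigcup_{w\in\W}\cC w\M=\bigcup_{w\in\W}\cU_w$.

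Finally I would treat the Weyl orbit using $\cC\cap\M'=\{e\}$. The orbit of $e\M$ is $\{w\M : w\in\W\}$, a set of $|\W|$ distinct points, and $w\M\in\cU_w$ because $e\in\cC$. Conversely, if $w'\M\in\cU_w$ then $w'=cwm$ for some $c\in\cC$, $m\in\M$, so $c\in\cC\cap\M'$; but every element of $\cC$ lies in the open cell $\overline{\B}\B$, while an element of $\M'$ representing a nontrivial Weyl element lies in a disjoint cell $\overline{\B}w''\B$, so $c$ must lie in $\M$, whereupon uniqueness of the big--cell factorization gives $c=e$. Hence $w'\M=w\M$, and $\cU_w$ meets the orbit of $e\M$ exactly in $w\M$. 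The step I expect to be the main obstacle is precisely the surjectivity $\K=\cC\M'$: the nontrivial input is that the finite right $\W$--orbit of an arbitrary flag must meet the open opposite cell, which I resolve by the observation that this action merely permutes the opposite Schubert cells; everything else is bookkeeping with the big--cell factorization and the normalizing properties of $\M$ and $\M'$.
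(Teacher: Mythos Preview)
Your treatment of openness, density, and the diffeomorphism $\cC\cong\cU_w$ via the right translation $R_w$ is fine, and your argument that $\cU_w$ meets the Weyl orbit only at $w\M$ (using $\cC\cap\M'=\{e\}$ via the big-cell factorization) is correct. The problem is the covering step, which you yourself flag as the heart of the matter.

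The assertion that ``the right action of $\W$ on $\G/\B$ given by $g\B\mapsto gw\B$ is well defined'' is false: a representative $w\in\M'$ does not normalize $\B$ (it normalizes $\M$ and $\A$, but $w\N w^{-1}\neq\N$ for $w\neq e$), so $g_1\B=g_2\B$ does not imply $g_1w\B=g_2w\B$. You can of course define the right $\W$-action on $\K/\M$ by $k\M\mapsto kw\M$; that is well defined. But then the claimed commutation with the left $\overline{\B}$-action fails, and hence so does the conclusion that opposite Schubert cells are permuted. A concrete counterexample is $\G=\SL(2,\R)$: the opposite Bruhat cells in $\K/\M\cong\R P^1$ are the open cell $\overline{\cB}_e=\K/\M\setminus\{w_0\M\}$ and the single point $\overline{\cB}_{w_0}=\{w_0\M\}$. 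The right action by $w_0$ sends $\{w_0\M\}$ to $\{e\M\}$, which is not an opposite Schubert cell. So the mechanism you propose---moving an arbitrary flag into the open opposite cell by right-translating by the index of its opposite Schubert cell---does not work: from $k\in\overline{\B}w_1\B$ one only gets $kw_1^{-1}\in\overline{\B}\,w_1\B w_1^{-1}=\overline{\B}\B^{w_1}$, and $\B^{w_1}\not\subset\overline{\B}\B$ in general.

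The paper sidesteps this by citing Lemma~2 of \cite{FH} for the covering (noting the argument there carries over to the real case), and then uses Corollary~\ref{cor:smooth-factorization}(1) and Proposition~\ref{pro:comp-rulings} for the diffeomorphism with $\cC$ and the Weyl-orbit statement. If you want a self-contained fix, one route is to show directly that each Bruhat cell $\cB_w$ lies in $\cU_w$ (this is the content of Lemma~\ref{lem:cover-Bruhat}), whence $\K/\M=\bigsqcup_w\cB_w\subset\bigcup_w\cU_w$; but that requires Proposition~\ref{pro:fact-Weyl} rather than the permutation-of-cells idea.
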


\begin{proof}
To show that subsets $\cU_w$ are open, dense and fit into a cover of $\G$ we appeal to Lemma 2 in \cite{FH}. Even though the statement there is for 
complex semisimple Lie groups,
the proof relies on properties of the Bruhat decomposition which are common to arbitrary semisimple Lie group, and thus it is valid in that generality.
We can either appeal again to \cite{FH} or use results from Section \ref{sec:lu} to exhibit the (obvious) canonical diffeomorphism of
$\cC$ with $\cU_w$, and to argue
that $w\M$
is the only intersection point of $\cU_w$ and the Weyl group orbit of $e\M$: by item (1) in Corollary \ref{cor:smooth-factorization}  
$\cC w\subset \K$ is a section  to the submersion $\K\to \K/\M$ over its image $\cU_w$. By Proposition \ref{pro:comp-rulings} $\cC w\subset \K$ intersects 
the fiber $w'\M$ if and only if $w'=w$.
\end{proof}

The Bruhat cover exploits the $\overline{\N}\U$ factorization. In \cite[Lemma 7.1]{DKV} coordinates $\psi_w:\cL_w\to \overline{\nn}$ are defined 
by sending $wk$ to the 
logarithm of the component in $\overline{\N}$ of the $\overline{\N}\U$ factorization of $k\in \cC$. 

It is also the case that each subset $\cU_w\subset \K/\M\cong \cO$ supports natural coordinates associated to the $\U\overline{\N}$ factorization and the Iwasawa rulings:

\begin{definition}\label{def:atlas}
For $w\in \W$, the  local coordinates on $\cU_w\subset \K/\M$ are
 \begin{equation}\label{eq:local-coordinates}
  \varphi_w: \cU_w\to H^w+\overline{\nn},\quad kw\M\mapsto \varphi_w(kw\M),
 \end{equation}
where  $\varphi_w(kw\M)$ is the unique solution of
\[\varphi_w(kw\M)={(H^w)}^{\overline{n}(k)},\]
and $\overline{n}(k)$ is the component in  $\overline{\N}$ of the $\U\overline{\N}$ factorization of $k\in \cC$.

\end{definition}
The map $\varphi_w$ is  the composition of
$\cU_w\to \cC$, $kw\M\mapsto k$, with $f:\cC\to \overline{\N}$, $k\mapsto \overline{n}(k)$, and with the inverse of the diffeomorphism
$\overline{\N}\to H^w+\overline{\nn}$, $\overline{n}\mapsto H^{\overline{n}w}$. By items (1) and (2) in Corollary \ref{cor:smooth-factorization},
the first and second maps are diffeomorphisms, hence $\varphi_w$ is a diffeomorphism. We interpret $\varphi_w$ as local  coordinates by regarding
the affine space $H^w+\overline{\nn}$  as a vector space with origin $H^\w$.

To argue that these coordinates are natural from the viewpoint of the geometry of the Iwasawa rulings of $\cO^{\G}$,
we regard $\cU_w$ as a subset of $\cO$:   
$\cU_w=\{H^{kw}\,|\,k\in \cC\}$. Firstly, upon acting by $\overline{n}(k)k^{-1}$ one goes from $H^{kw}\in \cU_w\subset \cO$ 
to $H^{\overline{n}(k)w}\in \cO^{G}$. Secondly, one observes that $H^{\overline{n}(k)w}$ belongs to the Iwasawa fiber\footnote{
The Iwasawa ruling is determined by the fixed root ordering; root orderings are in bijection with Weyl group elements. In our case
$H^{w}+\overline{\nn}$ is an Iwasawa fiber of the ruling defined by $w^{-1}$.} $H^{w}+\overline{\nn}$.

Next, we discuss the relation of the atlas $(\cU_w,\varphi_\w)$, $w\in W$, with the Bruhat and opposite Bruhat cells.
The diffeomorphism  (\ref{eq:diffeo-Iwasawa}) induces an action of $\G$ on $\K/\M$. The Bruhat and opposite Bruhat cells of  $\K/\M$ ---
which we denote by $\cB_w$ and $\overline{\cB}_w$, $w\in W$ ---
 are the orbits of the action of $\B$ and $\overline{\B}$, respectively. 
Both $\cB_w$ and $\overline{\cB}_w$ are contained in the corresponding open subset $\cL_w$ of the Bruhat cover and their images by the Bruhat coordinates
are vector subspaces of $\overline{\nn}$ \cite[Lemma 7.1]{DKV}. 

The following two results show that the Bruhat and opposite Bruhat cells 
have the 
the same property with respect to $(\cU_w,\varphi_w)$.


\begin{lemma}\label{lem:cover-Bruhat} The  cells at $w$, $\cB_w$ and $\overline{\cB}_w$, sit inside $\cU_w$. More precisely:
\begin{equation}\label{eq:cover-Bruhat}
\cB_w={\cC({\overline{w^{-1}}})}^{w}w\M\subset \cU_w,\quad \overline{\cB}_w={\cC({w^{-1}})}^{w}w\M\subset \cU_w.
 \end{equation}
\end{lemma}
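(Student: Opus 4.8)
The plan is to transport the statement to the flag manifold $\G/\B$ by means of the Iwasawa diffeomorphism (\ref{eq:diffeo-Iwasawa}), under which the $\G$-action on $\K/\M$ becomes left translation on $\G/\B$. Since the Iwasawa $\K$-component of an element of $\K$ is the element itself, the inverse of (\ref{eq:diffeo-Iwasawa}) sends $k\M\mapsto k\B$; thus $\cU_w=\cC w\M$ corresponds to $\cC w\B$, whereas $\cB_w$ and $\overline{\cB}_w$, being the $\B$- and $\overline{\B}$-orbits of $w\M$, correspond to the Bruhat cell $\B w\B/\B$ and the opposite cell $\overline{\B}w\B/\B$. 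Reading the superscript $w$ as conjugation $g\mapsto wgw^{-1}$, the asserted equalities become $\cB_w=w\,\cC(\overline{w^{-1}})\M$ and $\overline{\cB}_w=w\,\cC(w^{-1})\M$; note this already exhibits them inside $\cL_w=w\cC\M$, as expected.

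I would prove the first equality by a direct matching with the standard cell representatives. Using the second description in Definition \ref{def:fact-Weyl}, every $c\in\cC(\overline{w^{-1}})$ is the $\K$-component of a unique $\overline{n}\in\overline{\N}(\overline{w^{-1}})=\overline{\N}\cap w^{-1}\N w$ under the Iwasawa projection $\G=\K\A\N\to\K$, the correspondence $\overline{n}\leftrightarrow c$ being bijective because $\overline{\N}\cap\U=\{e\}$. Writing the factorization $\overline{n}=c\,a\,n$ gives $c=\overline{n}\,n^{-1}a^{-1}$ and hence
\[wc=(w\overline{n}w^{-1})\,w\,(n^{-1}a^{-1}).\]
Here $w\overline{n}w^{-1}\in\N$ precisely because $\overline{n}\in w^{-1}\N w$, in fact $w\overline{n}w^{-1}\in\N(\overline{w})=\N\cap w\overline{\N}w^{-1}$, while $n^{-1}a^{-1}\in\N\A=\U\subset\B$; therefore $wc\,\B=(w\overline{n}w^{-1})\,w\B$ is a standard representative of $\B w\B/\B$. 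As $\overline{n}$ ranges over $\overline{\N}(\overline{w^{-1}})$ its conjugate ranges bijectively over $\N(\overline{w})$, and the factorization (\ref{eq:nilp-factorization}) (in the order $\N=\N(\overline{w})\N(w)$, obtained by inversion) shows that these representatives exhaust $\B w\B/\B$. This yields $\cB_w=w\,\cC(\overline{w^{-1}})\M$.

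What still has to be checked, and where Proposition \ref{pro:fact-Weyl} is indispensable, is that these points lie in the chart $\cU_w=\cC w\M$ and not merely in $\cL_w$. Rewriting $wc\,\M=(wcw^{-1})w\M=c^{w}w\M$, the proposition yields $c^{w}=u\overline{n}'$ with $u\in\U$ and $\overline{n}'\in\overline{\N}(\overline{w})$, so $c^{w}$ has a $\U\overline{\N}$ factorization and therefore lies in $\cC$ by Corollary \ref{cor:smooth-factorization}; hence $c^{w}w\M\in\cC w\M=\cU_w$. The opposite cell is treated by the same computation with $\overline{\N}(w^{-1})=\overline{\N}\cap w^{-1}\overline{\N}w$ in place of $\overline{\N}(\overline{w^{-1}})$: now $w\overline{n}w^{-1}$ remains in $\overline{\N}$, in fact in $\overline{\N}(w)=\overline{\N}\cap w\overline{\N}w^{-1}$, so $wc\,\B$ falls among the representatives $\overline{\N}(w)\,w\B$ of $\overline{\B}w\B/\B$ (using $\overline{\N}=\overline{\N}(w)\overline{\N}(\overline{w})$), and the second identity of Proposition \ref{pro:fact-Weyl} again places the point in $\cU_w$. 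I expect the only real friction to be the systematic bookkeeping of the conjugation convention and of the choice of representative of $w$ in $\M'$—the latter being harmless, since $\overline{\N}(\overline{w^{-1}})$, $\cC(\overline{w^{-1}})$ and the base point $w\M$ are all unaffected by altering $w$ by an element of $\M$—so that the two pictures, cell representatives $n'w\B$ on the $\B w\B$ side and chart points $cw\B$ on the $\cC w\B$ side, are bridged correctly by Proposition \ref{pro:fact-Weyl}.
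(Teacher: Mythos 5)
Your argument is correct and follows essentially the same route as the paper: you identify $\cB_w$ (resp. $\overline{\cB}_w$) with $\cC(\overline{w^{-1}})^w w\M$ (resp. $\cC(w^{-1})^w w\M$) by matching the standard representatives $\N(\overline{w})w\B$ of the Bruhat cell with the conjugates of $\overline{\N}(\overline{w^{-1}})$ via Definition \ref{def:fact-Weyl} and the factorization (\ref{eq:nilp-factorization}), and then invoke Proposition \ref{pro:fact-Weyl} to place $\cC(\overline{w^{-1}})^w$ and $\cC(w^{-1})^w$ inside $\cC$, hence the cells inside $\cU_w$. The paper performs the first step by manipulating the double coset $\B w\B$ directly rather than element by element, but the ingredients and logic are identical.
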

\begin{proof}
The Bruhat cell at  $w$ in $\G/\B$ is 
\[\B w\B=\N w\M\A\N=\N(\overline{w})\N(w) w\M\A\N=\N(\overline{w})w\M\A\N=w\N(\overline{w})^{w^{-1}}\M\A\N.
\]
The Bruhat cell $\cB_w$ in $\K/\M$ is
\[k(w\N(\overline{w})^{w^{-1}})\M=(wk(\N(\overline{w})^{w^{-1}})w^{-1})w\M.\]
Since \[\N(\overline{w})^{w^{-1}}=(\N\cap \overline{\N}^w)^{w^{-1}}=\N^{w^{-1}}\cap \overline{\N}=\overline{\N}(\overline{w^{-1}}),\]
it follows from  Definition \ref{def:fact-Weyl} that $k(\N(\overline{w})^{w^{-1}})=\cC(\overline{w^{-1}})$. Therefore
\[\cB_w={\cC(\overline{w^{-1}})}^{w}w\M.\]
The equality $\overline{\cB}_w={\cC({w^{-1}})}^{w}w\M$ is proven in an analogous manner.

The equalities in (\ref{eq:factorization-Weyl}) imply the following inclusion of subsets of $\K$:
\[{\cC({\overline{w^{-1}}})}^{w}, {\cC({w^{-1}})}^{w}\subset \cC.\] 
Since $\cC w$ is a section to $\K\to \K/\M$ we obtain inclusions 
of subsets of $\K/\M$
\[{\cC({\overline{w^{-1}}})}^{w}w\M,{\cC({w^{-1}})}^{w}w\M\subset \cC w\M=\cU_w,\]
which gives the desired result.
\end{proof}

\begin{proposition}\label{pro:atlas-Bruhat}
For $w\in \W$, the local coordinates  $\varphi_w: \cU_w\to H^w+\overline{\nn}$  
transform the  Bruhat cells $\cB_w$ and $\overline{\cB}_w$ into affine subspaces,
\begin{equation}\label{eq:atlas-Bruhat}
\varphi_w(\cB_w)=H^w+\overline{\nn}(\overline{w}),\quad \varphi_w(\overline{\cB}_w)=H^w+\overline{\nn}(w).
\end{equation}
\end{proposition}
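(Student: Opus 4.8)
The plan is to prove each equality by pushing the description of the cell given in Lemma \ref{lem:cover-Bruhat} through the three elementary maps of which $\varphi_w$ is the composite: the section $cw\M\mapsto c$, the factorization map $f\colon\cC\to\overline{\N}$ of Corollary \ref{cor:smooth-factorization}, and the orbit map $\overline n\mapsto(H^w)^{\overline n}$. I will carry out the argument for $\cB_w$; the case of $\overline{\cB}_w$ is identical after replacing the first equality in (\ref{eq:factorization-Weyl}) by the second and $\overline{\N}(\overline w),\overline{\nn}(\overline w)$ by $\overline{\N}(w),\overline{\nn}(w)$.

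First I would invoke Lemma \ref{lem:cover-Bruhat}: $\cB_w=\cC(\overline{w^{-1}})^{w}w\M$ with $\cC(\overline{w^{-1}})^{w}\subset\cC$, so every point of $\cB_w$ reads $cw\M$ with $c=k^{w}$, $k\in\cC(\overline{w^{-1}})$ and $c\in\cC$. By Definition \ref{def:atlas}, $\varphi_w(cw\M)=(H^w)^{\overline n(c)}$, where $\overline n(c)$ is the $\overline{\N}$-factor of the unique $\U\overline{\N}$ factorization of $c$. This is exactly where Proposition \ref{pro:fact-Weyl} enters: its first equality says that $k\in\cC(\overline{w^{-1}})$ forces $c=k^{w}=u\overline n$ with $u\in\U$ and $\overline n\in\overline{\N}(\overline w)$, and uniqueness of the factorization (Corollary \ref{cor:smooth-factorization}) identifies $\overline n(c)=\overline n\in\overline{\N}(\overline w)$. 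Hence $\varphi_w$ carries $\cB_w$ into $\{(H^w)^{\overline n}\mid\overline n\in\overline{\N}(\overline w)\}$.

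The geometric core is then to identify this set with the affine subspace $H^w+\overline{\nn}(\overline w)$. The observation I would exploit is that, since $w$ normalizes $\aa$, the element $H^w$ again lies in $\aa$ and is regular, while $\overline{\nn}(\overline w)=\theta(\nn\cap\overline{\nn}^{w})=\overline{\nn}\cap\nn^{w}$ is a sum of $\aa$-root spaces, hence an $\ad(H^w)$-stable nilpotent subalgebra with connected integral subgroup $\overline{\N}(\overline w)$. Expanding $(H^w)^{\overline n}=\Ad(\overline n^{-1})H^w$ as $H^w$ plus iterated brackets of an element of $\overline{\nn}(\overline w)$ with $H^w$ shows that these all remain in $\overline{\nn}(\overline w)$, so the orbit map sends $\overline{\N}(\overline w)$ into $H^w+\overline{\nn}(\overline w)$, with invertible differential $\ad(H^w)|_{\overline{\nn}(\overline w)}$ at the origin by regularity of $H^w$. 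This is the very mechanism that makes $\overline{\N}\to H^w+\overline{\nn}$ a diffeomorphism (the opposite Iwasawa fibre at the regular point $H^w$), now applied to the $\ad(\aa)$-stable subalgebra $\overline{\nn}(\overline w)$; it yields a diffeomorphism $\overline{\N}(\overline w)\to H^w+\overline{\nn}(\overline w)$.

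The step I expect to be the main obstacle is \emph{surjectivity}. The three maps above immediately give the inclusion $\varphi_w(\cB_w)\subseteq H^w+\overline{\nn}(\overline w)$; for equality I must know that $f$ carries $\cC(\overline{w^{-1}})^{w}$ onto all of $\overline{\N}(\overline w)$, equivalently that every $c\in\cC$ with $\overline n(c)\in\overline{\N}(\overline w)$ already satisfies $c^{w^{-1}}\in\cC$ (this is the one point not handed to us verbatim by Proposition \ref{pro:fact-Weyl}). I would settle it exactly as in the proof of Proposition \ref{pro:fact-Weyl-sl}: the composite $\cC(\overline{w^{-1}})^{w}\xrightarrow{f}\overline{\N}(\overline w)$ is a real-analytic injective immersion between manifolds of the same dimension $\ell(w)$, the common value of $\dim\cC(\overline{w^{-1}})$ and $\dim\overline{\nn}(\overline w)$, so by invariance of domain its image is open, and being the image of a closed subset under the global diffeomorphism $f$ it is also closed in the connected group $\overline{\N}(\overline w)$, hence all of it; this composite is precisely one of the real-analytic diffeomorphisms $\Phi$ furnished by Proposition \ref{pro:fact-Weyl}. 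Chaining the three diffeomorphisms gives $\varphi_w(\cB_w)=H^w+\overline{\nn}(\overline w)$, and the identical run with the second equality of (\ref{eq:factorization-Weyl}) gives $\varphi_w(\overline{\cB}_w)=H^w+\overline{\nn}(w)$.
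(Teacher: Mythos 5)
Your overall route is the paper's: Lemma \ref{lem:cover-Bruhat} to write $\cB_w={\cC(\overline{w^{-1}})}^{w}w\M$, Proposition \ref{pro:fact-Weyl} to control the $\U\overline{\N}$ factorization of the conjugated elements, and the orbit-map identification of ${(H^w)}^{\overline{\N}(\overline{w})}$ with $H^w+\overline{\nn}(\overline{w})$ (which the paper phrases instead as the intersection $(H^w+\overline{\nn})\cap(H^w+\nn^w)$ — your bracket-expansion version is a fine equivalent). The one place you deviate is the surjectivity step, and that is where there is a genuine gap. Your open-plus-closed argument needs $f\bigl({\cC(\overline{w^{-1}})}^{w}\bigr)$ to be closed in $\overline{\N}(\overline{w})$, which you justify by saying it is the image of a closed set under the diffeomorphism $f$. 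But closedness of ${\cC(\overline{w^{-1}})}^{w}$ in $\cC$ is not available: $\cC(\overline{w^{-1}})$ is closed in $\cC$ (image of the closed subgroup $\overline{\N}(\overline{w^{-1}})$ under the section of Corollary \ref{cor:smooth-factorization}), but conjugation by $w$ does not preserve $\cC$, so it does not transport ``closed in $\cC$'' to ``closed in $\cC$''; these sets are images of non-compact closed subgroups inside the compact group $\K$, so their closedness is exactly as delicate as the surjectivity you are trying to prove. An injective continuous map between Euclidean spaces of equal dimension with open image need certainly not be onto, so invariance of domain alone does not close the argument. (Note also that the paper's own use of invariance of domain, in the proof of Proposition \ref{pro:fact-Weyl-sl}, is only to upgrade an already \emph{bijective} analytic map to a diffeomorphism — not to establish surjectivity.)

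The repair is that the surjectivity \emph{is} handed to you by Proposition \ref{pro:fact-Weyl}, read as the equality of sets it asserts, and this is how the paper proceeds. Given $\overline{n}\in\overline{\N}(\overline{w})$, Corollary \ref{cor:smooth-factorization}(2) produces the unique $k'\in\cC$ with $k'=u\overline{n}$; then $k:=(k')^{w^{-1}}$ satisfies $k^w=u\overline{n}$ with $\overline{n}\in\overline{\N}(\overline{w})$, and the ``conversely'' half of the proof of Proposition \ref{pro:fact-Weyl} (which derives $k\in\cC(\overline{w^{-1}})$ directly from this factorization of $k^w$) places $k$ in $\cC(\overline{w^{-1}})$, hence $k'\in{\cC(\overline{w^{-1}})}^{w}$ and $\varphi_w(k'w\M)={(H^w)}^{\overline{n}}$. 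With that substitution your argument coincides with the paper's, which simply rewrites $\cB_w=\{kw\M\,|\,k=u\overline{n},\ \overline{n}\in\overline{\N}(\overline{w})\}$ and reads off $\varphi_w(\cB_w)={(H^w)}^{\overline{\N}(\overline{w})}=H^w+\overline{\nn}(\overline{w})$.
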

\begin{proof}
To calculate $\varphi_w(\cB_w)$ we use that 
by  Lemma \ref{lem:cover-Bruhat} and Proposition \ref{pro:fact-Weyl}  
\[\cB_w={\cC({\overline{w^{-1}}})}^{w}w\M\subset \K/\M, \quad  \cC(\overline{w^{-1}})=\{k\in \cC\,|\, k^w=u\overline{n},\,\,u\in \U,\,\,\overline{n}\in \overline{\N}(\overline{w})\},\]
respectively. 
 Therefore $\cB_w=\{kw\M\,|\,k=u\overline{n},\,\overline{n}\in \overline{\N}(\overline{w})\}$ and this implies
 \[\begin{split}
\varphi_w(\cB_w) &= \{{(H^w)}^{\overline{n}}\,|\,\overline{n}\in \overline{\N}(\overline{w})\}={(H^w)}^{\overline{\N}(\overline{w})}=
{(H^w)}^{\overline{\N}\cap \N^{w}}=\\
  &= (H^w+\overline{\nn})\cap (H^w+\nn^{w})=H^w+\overline{\nn}(\overline{w}).
\end{split}
\]
The second equality is proven in an analogous manner.
 \end{proof}

\noindent {\bf Example 2.} \label{ex:chart} We shall compute explicitly (the inverse of) $\varphi_w:\cU_w\subset \cO\to H^w+\overline{\nn}$ for
$\gg=\mathrm{sl}(2,\R)$ and $w=e$.

The domain of the chart is
\[H+\overline{\nn}= \begin{pmatrix}
           \lambda & 0\\ x & -\lambda
          \end{pmatrix},\]
where $\lambda>0$ is fixed and $x\in \R$.
Its image will be $\mathcal{U}_e\subset \cO$, where $\cO$ sits inside the 2-dimensional vector space of $2\times 2$ symmetric  trace zero matrices.

First, we compute the diffeomorphism $H+\overline{\nn}\to \overline{\N}$, $H+X\mapsto g$,
determined by the equality $H^g=H+X$:
\[g(X)=\begin{pmatrix}
           1 & 0\\  \tfrac{x}{2\lambda} & 1
          \end{pmatrix}.\]
Now, for $g\in \overline{\N}$ we compute the unique orthogonal matrix $k\in \K$ whose $\overline{\N}$ factor in its $\U\overline{\N}$
decomposition equals $g$: we invert $g$, compute its orthogonal factor in the Gram-Schmidt factorization and transpose this orthogonal
factor:
\[k(g)=\begin{pmatrix}
           \tfrac{1}{\sqrt{1+a^2}} &  -\tfrac{a}{\sqrt{1+a^2}}\\  \tfrac{a}{\sqrt{1+a^2}} & \tfrac{1}{\sqrt{1+a^2}}
          \end{pmatrix},\quad g=\begin{pmatrix}
           1 & 0\\  a & 1
          \end{pmatrix}.\]
Finally, we conjugate $H$ with $k$ for $k=k(g)$ and $g=g(X)$:
\begin{eqnarray}\label{eq:chart}\nonumber
 \varphi_e^{-1}: H+\overline{\nn}\cong \R &\longrightarrow & \cO\subset \pp\subset \sl(2,\R)\\
       \begin{pmatrix}
           \lambda & 0\\ x & -\lambda
          \end{pmatrix}  & \longmapsto & \frac{1}{1+\tfrac{x^2}{4\lambda^2}}\begin{pmatrix}
           \lambda-\tfrac{x^2}{4\lambda} & x\\
         x & \tfrac{x^2}{4\lambda}-\lambda
          \end{pmatrix}.
\end{eqnarray}

\begin{remark}\label{rem:two-covers} (The two open covers and the $\U\overline{\N}$ and $\overline{\N}\U$ factorizations).
The  subsets  $\cL_w$ and $\cU_w$ agree for $w=e$, but they are in general different for arbitrary $w$. 
\[\cL_w=\cU_w\ \Longleftrightarrow w^{-1}\cL_w=w^{-1}\cU_w \Longleftrightarrow  \cC\M=\cC^{w^{-1}}\M\subset \K/\M. 
\]
The rightmost equality does not hold in general.
For instance, for $\G=\mathrm{SL}(3,\R)$ and $\K=\mathrm{SO}(3,\R)$  the subset $\cC\subset \K$ consists of
matrices in $\K$ whose minors along the antidiagonal are positive; its saturation by the fibers of 
$\K\to \K/\M$ are matrices in $\K$ whose minors along the antidiagonal are non-zero. This subset is 
not closed under conjugation by the permutation $(1,3,2)$. For example, take a matrix in the subset whose  $(2,2)$-entry vanishes. 

The subsets $\cU_w$ are engineered to exploit the $\U\overline{\N}$ factorization, whereas the Bruhat cover exploits the $\overline{\N}\U$ factorization.
Because the inversion map on $\cC$ intertwines both factorizations it is natural to consider for each $w$ the diffeomorphism
\begin{equation}\label{eq:un-to-nu}
\tau_w:\cU_w\to \cL_{w^{-1}},\quad kw\M\mapsto w^{-1}k^{-1}\M. 
\end{equation}
If $\overline{n}(k)$ is the component in $\overline{\N}$ of the $\U\overline{\N}$ factorization of $k$,
then the images of $kw\M$ and $\tau_w(kw\M)$ in $\G/\B$ by the diffeomorphism 
(\ref{eq:diffeo-Iwasawa}) are 
\[\overline{n}(k)w\B,\quad w^{-1}{\overline{n}(k)}^{-1}\B.\]
%
%
\end{remark}

%
%
%

\section{The Toda flow in local coordinates}\label{sec:lin}

Formula (\ref{eq:Toda}) describing the Toda vector field $X \mapsto \cT(X)$ is valid  for any non-compact real semisimple Lie algebra with fixed Iwasawa decomposition:
the projection $\pi_\kk$ is taken with respect to the direct sum decomposition $\gg=\kk\oplus \uu$, where $\uu=\aa\oplus \nn$ (this coincides with the definition of
the projection operator  in \cite[Section 2]{MP} by means of root spaces). The Lax form implies
that $\cT$ is tangent to every adjoint orbit, in particular to the regular hyperbolic orbit $\cO^\G$. Because
of the properties of the Cartan decomposition in (\ref{eq:cartan-decomposition}), $\cT$ is tangent to the subspace $\pp$.
Therefore $\cT$ is tangent to the manifold of real full flags $\cO$.

In this section we  show that the local coordinates in Definition \ref{def:atlas}
linearize $\cT$ on $\cO$. This, together with the result in Section \ref{sec:atlas}, will be the ingredients to prove  Theorem \ref{thm:st-ust}
relating Bruhat and opposite Bruhat cells to unstable and stable manifolds of $\cT$.

Because the Toda vector field at $X\in \cO$ is the fundamental vector field of $-\pi_\kk(X)\in \kk$ for the adjoint action of $\K$ on $\cO$,
for any of its integral curves $X(t)$ there exists a canonical choice of curve  $k(t)\subset \K$, $k(0)=e$, such that \[X(t)={X(0)}^{k(t)}.\]
Let $\mathcal{Y}_t$ be the right invariant vector field in $\K$ which at the identity equals $-\pi_\kk(X(t))$,
\[\mathcal{Y}_t(k)=dR_k(-\pi_\kk(X)),\quad k\in \K,\]
where $R_k:\K\to \K$ is given by  right multiplication by $k$.
Then $k(t)$ is the curve defined by applying the flow of $\mathcal{Y}_t$ to the identity element $e\in \K$.
 
The subset $\cC\subset \K$ is not open in general\footnote{It is open exactly when $\G$ is a split group. For instance, for $\G=\SL(n,\R)$}. The next result shows how the above curves in $\K$ are related
to $\cC$.
\begin{lemma}\label{lem:Toda-big-cell}
Let $X(t)$ be the integral curve of $\cT$ through $X(0)\in \cU_w$, $X(0)={(H^{w})}^{k_0}$, and let 
 $k(t)\subset \K$ be the curve through $e$
described above. 
Then  for small values of $t$ 
\[k(t)k_0\subset \cC m_0,\] where 
$m_0$ is the component in $\M$ of the factorization of $k_0\in \cC\M$.
 \end{lemma}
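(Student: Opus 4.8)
The plan is to reduce the statement to a single membership claim $k(t)\tilde k\in\cC$ and then to exhibit an explicit $\U\overline{\N}$ factorization witnessing it, closing the argument with Lemma \ref{lem:factorization}. First I would strip off the $\M$-factor. Writing $k_0=\tilde k\,m_0$ with $\tilde k\in\cC$ and $m_0\in\M$, the desired conclusion $k(t)k_0\in\cC m_0$ is equivalent to $k(t)\tilde k\in\cC$, since $k(t)k_0 m_0^{-1}=k(t)\tilde k$ and $\cC m_0 m_0^{-1}=\cC$. Because $m_0\in\M$ centralizes $\aa$ and $H^w=\Ad(w)H\in\aa$, one has $X(0)=\Ad(k_0)H^w=\Ad(\tilde k)H^w=\Ad(\tilde k w)H$, so $\tilde k$ is precisely the $\cC$-representative of $X(0)\in\cU_w$ and the problem is rephrased with no reference to $m_0$.

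Next I would identify the curve $k(t)$ with the factor of an Iwasawa-type factorization of $\exp(tX(0))$, in the spirit of the classical description of Toda solutions in \cite{RS,Sy}. Concretely, set $R(t):=k(t)\exp(tX(0))$, so $R(0)=e$. Using the defining relation $\dot k\,k^{-1}=-\pi_\kk X(t)$ together with $\Ad(k(t))X(0)=X(t)$, one computes
\[
\dot R\,R^{-1}=\dot k\,k^{-1}+\Ad(k(t))X(0)=-\pi_\kk X(t)+X(t)=\pi_\uu X(t)\in\uu .
\]
Since $\uu=\aa\oplus\nn$ is the Lie algebra of $\U$ and $R(0)=e$, this forces $R(t)\in\U$, hence $k(t)=R(t)\exp(-tX(0))$ with $R(t)\in\U$.

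The final step is factorization bookkeeping. From $X(0)=\Ad(\tilde k w)H$ we get $\exp(-tX(0))=(\tilde k w)\exp(-tH)(\tilde k w)^{-1}$, whence $\exp(-tX(0))\,\tilde k=\tilde k\,\exp(-tH^w)$, and $a(t):=\exp(-tH^w)\in\A$ because $H^w\in\aa$. Therefore $k(t)\tilde k=R(t)\,\tilde k\,a(t)$. Taking the $\U\overline{\N}$ factorization $\tilde k=u_0\overline{n}_0$ of $\tilde k\in\cC$ provided by Corollary \ref{cor:smooth-factorization}, and using that $\A$ normalizes $\overline{\N}$ so that $\overline{n}_0\,a(t)=a(t)\,\overline{n}_0'$ with $\overline{n}_0'\in\overline{\N}$, one obtains
\[
k(t)\tilde k=\bigl(R(t)\,u_0\,a(t)\bigr)\,\overline{n}_0',\qquad R(t)\,u_0\,a(t)\in\U,\ \ \overline{n}_0'\in\overline{\N}.
\]
This is a $\U\overline{\N}$ factorization of $k(t)\tilde k\in\K$, so Lemma \ref{lem:factorization} gives $k(t)\tilde k\in\cC$, and hence $k(t)k_0\in\cC m_0$.

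I expect the main obstacle to be the second step: correctly pinning down $k(t)$ as $R(t)\exp(-tX(0))$ with $R(t)\in\U$, i.e.\ recognizing the companion $\U$-valued curve whose very existence is the factorization phenomenon underlying the Toda flow; everything afterwards is routine manipulation inside $\U$, $\A$ and $\overline{\N}$. The restriction to small $t$ is harmless, since the construction in fact produces the $\U\overline{\N}$ factorization at every $t$ for which the flow is defined.
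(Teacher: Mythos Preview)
Your argument is correct, but it is genuinely different from the paper's. The paper proves the lemma \emph{infinitesimally}: it shows that at every point $k_1\in\cC\M$ the right-invariant vector field $\mathcal{Y}_X$ determined by $-\pi_\kk X$ is tangent to the distribution underlying the foliation $\{\cC m\}_{m\in\M}$, by expressing $\mathcal{Y}_X(k_1)$ as the velocity of a curve $k(\gamma_{X,k_1}(s))^{-1}$ that visibly stays in a leaf $\cC m_1$. This yields the conclusion only for small $t$, which is all that is claimed.

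You instead give a \emph{global} factorization argument in the spirit of \cite{RS,Sy}: the observation $\dot R R^{-1}=\pi_\uu X(t)\in\uu$ forces $R(t)=k(t)\exp(tX(0))\in\U$ for all $t$, and then elementary shuffling inside $\U$, $\A$, $\overline{\N}$ produces an explicit $\U\overline{\N}$ factorization of $k(t)\tilde k$, so Lemma~\ref{lem:factorization} places it in $\cC$. Your route is shorter and yields the stronger conclusion that $k(t)k_0\in\cC m_0$ for \emph{all} $t$, not just small $t$; it also hands you, for free, the factorization $k(t)k_0=u(t)\overline{n}(t)m_0$ with $u(t)=R(t)u_0a(t)$ and $\overline{n}(t)=a(t)^{-1}\overline{n}_0 a(t)$ that the paper derives separately at the start of the proof of Theorem~\ref{thm:lin-atlas}. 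The price is that you have front-loaded the Symes/Reyman--Semenov-Tian-Shansky factorization idea, whereas the paper keeps Lemma~\ref{lem:Toda-big-cell} self-contained (using only differentials of Iwasawa projections) and invokes the factorization viewpoint only afterwards.
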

\begin{proof}

We have to prove that $k(t)k_0$ is contained in one of the leaves $\cC m$, $m\in \M$, which foliate $\cC\M$.
It suffices to show that each velocity vector
\[dR_{k_0}(k'(t))=dR_{k_0}(\mathcal{Y}_{t}(k(t)))=\mathcal{Y}_t(k(t)k_0)\]
is tangent to the underlying
distribution. This amounts to proving the following:   
If $X=\cU_w$, $X={(H^{w})}^{k_1}$, $k_1\in \K$, and $\mathcal{Y}_{X}$ denotes the right invariant vector field in $\K$ determined 
by $-\pi_\kk(X)\in \kk$, then $\mathcal{Y}_{X}(k_1)$, is tangent
to the underlying distribution.

A more explicit description of $\mathcal{Y}_X$ follows from an analyisis of the differential of the three Iwasawa projections for $\G=\K\A\N$.
Consider for any $X\in \cU_w$, $k\in \K$, the curve $\gamma_{X,k}(s)=k^{-1}\exp(sX)\subset \G$. Then 
by \cite[Lemma 2.2]{M} we have
\[
X= dL_{k^{-1}}\left(\frac{d}{ds}\bigg\rvert_{s=0}k(\gamma_{X,k}(s))\right)+ \frac{d}{ds}\bigg\rvert_{s=0}a(\gamma_{X,k}(s))
+\frac{d}{ds}\bigg\rvert_{s=0}n(\gamma_{X,k}(s)),
\]
where $L_{k^{-1}}:\K\to \K$ is given by left multiplication by $k^{-1}$ and the curves whose velocity at $s=0$ we are computing are the three Iwasawa projections of $\gamma_{X,k}(s)$.
Because the three velocity vectors belong to $\kk$, $\aa$ and $\nn$, respectively, 
the first summand is the left invariant vector field at $k^{-1}$ determined by $\pi_\kk(X)\in \kk$, and,  thus
\[ \mathcal{Y}_{X}(k)=\frac{d}{ds}\bigg\rvert_{s=0}{k(\gamma_{X,k}(s))}^{-1}.\]
Factor  $k_1=u_1\overline{n}_1m_1$, $u_1\in \U$, $\overline{n}_1\in \overline{\N}$, $m_1\in \M$, so that
\[\begin{split}
   {k(\gamma_{X,k}(s))}^{-1} &={k(k^{-1}\exp(s{(H^{w})}^{k_1}))}^{-1}={k(m_1\exp(sH^{w})k_1^{-1})}^{-1}\overline{n}_1^{-1}u_1^{-1}k=\\
    & ={k(\overline{n}_1^{-1}{\exp(sH^{w})}^{m_1\overline{n}_1}u_1^{-1})}^{-1}\overline{n}_1^{-1}u_1^{-1}k.
   \end{split}
\]
Because $A^{\overline{n}_1}\subset \overline{\N}$ and $\cC$ is closed under inversion the curve 
${k(\gamma_{X,k_1}(s))}^{-1}$ lies in  $\cC m_1$, and therefore 
$\mathcal{Y}_{X}(k_1)$
is tangent to the underlying distribution.
\end{proof}

The linearization of the Toda vector field in the local coordinates $(\cU_w,\varphi_w)$  builds on Lemma \ref{lem:Toda-big-cell} and 
on well known factorization properties of solutions of the Toda flow (see
e.g. \cite[Propositions 26,27]{RS} or \cite[Section 3]{Sy}). It also reveals a connection  of $\cT$ with
the adjoint action of $\A$ on Iwasawa fibers. 

\begin{theorem}\label{thm:lin-atlas}
 Let $\gg$ be a non-compact real semisimple Lie algebra with fixed Iwasawa decomposition and let $\varphi_w:\cU_w\to H^\w+\overline{\nn}$, $w\in \W$,
 be the local coordinates in Definition \ref{def:atlas}.
Then the diffeomorphism $\varphi_w$ transforms the Toda vector field $\cT$ into the linear vector field
 \begin{equation}
 \label{eq:Toda-linear-coord}X'=[H^{w},X],\quad X\in H^w+\overline{\nn}, 
 \end{equation}
 whose stable and unstable manifolds at its critical point $H^w$ are the subspaces
 \[H^w+\overline{\nn}(w),\quad H^w+\overline{\nn}(\overline{w}).\]
\end{theorem}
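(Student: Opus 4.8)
The strategy is to use Lemma \ref{lem:Toda-big-cell} to transport the Toda flow into the coordinate $\varphi_w$ and identify the resulting equation. Let $X(t)$ be an integral curve of $\cT$ through $X(0) = (H^w)^{k_0} \in \cU_w$, and let $k(t) \subset \K$ be the canonical curve with $k(0)=e$ and $X(t) = (H^w)^{k(t)k_0}$. By Lemma \ref{lem:Toda-big-cell}, for small $t$ we have $k(t)k_0 \in \cC m_0$, so we may write $k(t)k_0 = c(t) m_0$ with $c(t) \in \cC$ and $c(0)m_0 = k_0$. Since $m_0 \in \M$ centralizes $H^w$ (as $w \in \M'$ normalizes $\aa$ and $\M$ centralizes it), we get $X(t) = (H^w)^{c(t)}$. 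The point is then to read off the evolution of $\varphi_w(X(t)) = (H^w)^{\overline{n}(c(t))}$, where $\overline{n}(c(t))$ is the $\overline{\N}$-component of the $\U\overline{\N}$ factorization of $c(t)$.

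\medskip

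The key computational input is the classical factorization of Toda solutions. First I would lift the flow to the group: the integral curve $X(t) = X(0)^{g(t)}$ is governed, in the Lax formulation, by a curve $g(t) \subset \G$ satisfying $g'(t) = \cdots$ whose solution is the $\K$-part of $\exp(-tX(0))$ in an appropriate Iwasawa-type factorization (this is exactly the content of \cite[Propositions 26,27]{RS} and \cite[Section 3]{Sy}, adapted to our $\U\overline{\N}$ convention). Concretely, writing $\exp(-tH^w)$ and tracking its opposite Iwasawa factorization against the initial data, one obtains that the $\overline{\N}$-factor $\overline{n}(c(t))$ evolves so that
\begin{equation}\label{eq:Ad-exp}
(H^w)^{\overline{n}(c(t))} = \operatorname{Ad}(\exp(tH^w))\,(H^w)^{\overline{n}(c(0))}
\end{equation}
up to the group-level factorization bookkeeping. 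Differentiating \eqref{eq:Ad-exp} at each time yields
\begin{equation}\label{eq:Y-diff}
\frac{d}{dt}\varphi_w(X(t)) = [H^w, \varphi_w(X(t))],
\end{equation}
which is precisely \eqref{eq:Toda-linear-coord}. Since $\operatorname{ad}(H^w)$ is a fixed linear endomorphism of $\overline{\nn}$ (using that $H^w + \overline{\nn}$ is an affine space translated from the origin $H^w$, and $[H^w, \overline{\nn}] \subset \overline{\nn}$), the vector field is genuinely linear on all of $H^w + \overline{\nn}$.

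\medskip

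For the stable and unstable manifolds, I would diagonalize $\operatorname{ad}(H^w)$ on $\overline{\nn}$. The eigenspaces are the root spaces $\gg_{-\alpha}$ (for positive roots $\alpha$), with eigenvalue $-\alpha(H^w) = -\alpha(w^{-1}H)$ up to the action of $w$; since $H$ is regular in the positive chamber, the sign of each eigenvalue is governed by whether $w$ sends the corresponding root to a positive or negative root. The stable subspace (eigenvalues with negative real part for the linear flow $\dot X = [H^w, X]$, whose solution is $\operatorname{Ad}(\exp(tH^w))$) collects exactly those root directions on which the exponential decays, namely $\overline{\nn}(w) = \overline{\nn} \cap \nn^w$, and the unstable subspace collects $\overline{\nn}(\overline{w}) = \overline{\nn} \cap \overline{\nn}^w$. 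This matches the affine subspaces $H^w + \overline{\nn}(w)$ and $H^w + \overline{\nn}(\overline{w})$ of Proposition \ref{pro:atlas-Bruhat}, and the sign convention in \eqref{eq:Toda-linear-coord} must be checked to align stable with $w$ and unstable with $\overline{w}$.

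\medskip

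\textbf{Main obstacle.} The delicate step is \eqref{eq:Ad-exp}: establishing that the $\U\overline{\N}$ factorization interacts with the Toda flow to give exactly the conjugation by $\exp(tH^w)$, and not some twisted version. The classical results are phrased for the $\LU$/$\L\U$ factorization of $\exp(t X(0))$ in $\SL(n,\R)$; here I must reconcile (a) the Lie-group-level solution of the Toda equation, (a matrix exponential $\exp(\pm t X(0))$), (b) the opposite Iwasawa factorization controlling the $\K$-part $k(t)$, and (c) the $\U\overline{\N}$ factorization defining $\varphi_w$. The care is in verifying that the $\U$-factor drops out cleanly when we pass from $k(t)$ to $\overline{n}(c(t))$ and conjugate $H^w$, so that only the abelian $\exp(tH^w)$-conjugation survives. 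Once this identification is in place, \eqref{eq:Y-diff} and the eigenvalue analysis are routine.
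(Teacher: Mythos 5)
Your strategy is viable and genuinely different from the paper's, but as written the argument has a hole exactly where you flag the ``main obstacle'': the identity $(H^w)^{\overline{n}(c(t))}=\mathrm{Ad}(\exp(tH^w))\,(H^w)^{\overline{n}(c(0))}$ is asserted ``up to the group-level factorization bookkeeping'' and never established, and it is not a routine consequence of the cited references --- it \emph{is} the theorem in integrated form. The bookkeeping is short, however, and does close the gap. With the convention $Y^g=\mathrm{Ad}(g)Y$, the group-level solution is the Iwasawa factorization $\exp(tX(0))=k(t)^{-1}b(t)$ with $b(t)\in\U=\A\N$: differentiating and projecting onto $\kk$ gives $\dot k k^{-1}=-\pi_\kk(X(t))$, which is the defining equation of the canonical curve $k(t)$. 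Since $X(0)=(H^w)^{k_0}$ one has $\exp(tX(0))=k_0\exp(tH^w)k_0^{-1}$, hence $k(t)k_0=b(t)k_0\exp(-tH^w)$. Writing $k_0=u_0\overline{n}_0m_0\in\U\overline{\N}\M$ and using that $\A$ normalizes $\overline{\N}$ and that $\M$ centralizes $\A$,
\[
k(t)k_0=\bigl(b(t)u_0\exp(-tH^w)\bigr)\bigl(\exp(tH^w)\overline{n}_0\exp(-tH^w)\bigr)m_0\in\U\,\overline{\N}\,\M,
\]
so $\overline{n}(c(t))=\exp(tH^w)\overline{n}_0\exp(-tH^w)$ and therefore $B(t)=(H^w)^{\overline{n}(c(t))}=\mathrm{Ad}(\exp(tH^w))B(0)$, because $\mathrm{Ad}(\exp(-tH^w))$ fixes $H^w$. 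Differentiating yields $B'=[H^w,B]$. Note that this computation exhibits $k(t)k_0\in\cC\M$ for all $t$, so on your route Lemma \ref{lem:Toda-big-cell} becomes essentially unnecessary: only the $t=0$ factorization is needed, and that is the hypothesis $X(0)\in\cU_w$.

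Two further remarks. First, this really is a different proof from the paper's: the paper never integrates the flow, but differentiates the factorization $k(t)k_0=u(t)\overline{n}(t)m_0$ directly, identifies $dR_{\overline{n}(t)}(\overline{n}'(t))$ as $\pi_{\overline{\nn}}$ applied to $-X(t)^{u(t)^{-1}}$, and then trades the resulting fundamental vector field of $\overline{\N}$ for that of $\A$. That argument needs Lemma \ref{lem:Toda-big-cell} but not the global Symes-type factorization; yours needs the latter but delivers the explicit global-in-time solution for free. Second, in the eigenvalue analysis you have swapped the two subalgebras: in the paper $\overline{\nn}(w)=\overline{\nn}\cap\overline{\nn}^{\,w}$ and $\overline{\nn}(\overline{w})=\overline{\nn}\cap\nn^{w}$ (they are the images under $\theta$ of $\nn(w)=\nn\cap\nn^w$ and $\nn(\overline{w})=\nn\cap\overline{\nn}^w$). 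The decaying directions of $\mathrm{Ad}(\exp(tH^w))$ on $\overline{\nn}$ are the $\gg_{-\alpha}$ with $\alpha(H^w)>0$, i.e.\ $w^{-1}\alpha\in\Sigma^+$, and these span $\overline{\nn}\cap\overline{\nn}^{\,w}=\overline{\nn}(w)$; so your final matching of stable with $\overline{\nn}(w)$ and unstable with $\overline{\nn}(\overline{w})$ agrees with the theorem, but the set-level descriptions you attach to those symbols are interchanged.
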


\begin{proof}
Let $X(t)$ be an integral curve of the Toda vector field 
\[X'(t)=[-\pi_\kk (X(t)),X(t)],\quad X(0)\in \cU_w,\quad X(0)={(H^{w})}^{k_0},\]
and let $k(t)\subset \K$ be the corresponding curve throught the identity
\[X(t)=X(0)^{k(t)},\quad   dR_{k_0}(k'(t))=\mathcal{Y}_t(k(t)k_0).\]
We assume that $t$ is small enough so that $X(t)\subset \cU_w$ and thus we can define $B(t)=\varphi_w(X(t))$. 
By Lemma \ref{lem:Toda-big-cell} we have a factorization
\begin{equation}\label{eq:factor-zero}
k(t)k_0=u(t)\overline{n}(t)m_0,\quad u(t)\in \U,\,\,\overline{n}(t)\in \overline{\N},\,\,m_0\in \M.
\end{equation}
Thus by Definition \ref{def:atlas}
\begin{equation}\label{eq:image-curve}
B(t)={(H^{w})}^{\overline{n}(t)}\subset H^{w}+\overline{\nn}.
\end{equation}
The equality (\ref{eq:image-curve}) implies  that $B'(t)$ is the value at $B(t)$ of the 
fundamental vector field of $dR_{\overline{n}(t)^{-1}}(\overline{n}'(t))\in \overline{\nn}$ for the adjoint
action of $\overline{\N}$ on $H^{w}+\overline{\nn}$.  This means
\begin{equation}\label{eq:Toda-sub-zero}
B'(t)=[ dR_{\overline{n}(t)^{-1}}(\overline{n}'(t)),B(t)].
\end{equation}
Next, we describe an explicit relation of $dR_{\overline{n}(t)^{-1}}(\overline{n}'(t))\in \overline{\nn}$ and the Toda vector $-\pi_\kk(X(t))$.
To that end we regard
(\ref{eq:factor-zero}) as a curve in $\G$ and differentiate it to get an equality of vectors at $T_{k(t)k_0}\G$:
\[dR_{k_0}(k'(t))=dR_{\overline{n}(t)m_0}(u'(t))+dL_{u(t)}\circ dR_{m_0}(\overline{n}'(t)).\]
Appling the differential of $L_{m_0^{-1}\overline{n}(t)^{-1}u(t)^{-1}}$ we obtain an equality of vectors at $\gg$
\[
-\pi_\kk(X(t))=dR_{u(t)^{-1}}(u'(t))+dL_{u(t)}\circ dR_{u(t)^{-1}}\circ dR_{\overline{n}(t)}(\overline{n}'(t)),
\]
to which we apply the adjoint action of $u(t)^{-1}$ to get
\begin{equation}\label{eq:sum-zero}
 -\pi_\kk(X(t))^{u(t)^{-1}}=dR_{u(t)^{-1}}(u'(t))^{u(t)^{-1}}+dR_{\overline{n}(t)}(\overline{n}'(t)).
\end{equation}
Let $\pi_{\overline{\nn}}$ denote the first projection associated to the splitting $\gg=\overline{\nn}\oplus \mm\oplus \uu$. The 
equality (\ref{eq:sum-zero}) implies that 
\begin{equation}\label{eq:sum-one}
dR_{\overline{n}(t)}(\overline{n}'(t))=\pi_{\overline{\nn}}(-\pi_\kk(X(t))^{u(t)^{-1}}).
\end{equation}
To simplify the right hand side  we use two facts. Firstly, we can replace the projection $\pi_\kk$ by the identity map  
because the kernel of $\pi_\kk$ is both invariant under the adjoint action of $\U$ and a subspace of the kernel of $\pi_{\overline{\nn}}$.
Secondly, the restriction of $\pi_{\overline{\nn}}$ to $H^{w}+\overline{\nn}$ amounts to substracting $H^{w}$.
 The conclusion is that (\ref{eq:sum-one}) becomes 
\[dR_{\overline{n}(t)}(\overline{n}'(t))=-\pi_\nn(X(t)^{u(t)^{-1}})=-\pi_\nn(B(t))=-B(t)+H^{w},\]
which together with (\ref{eq:Toda-sub-zero}) gives  $B'(t)=[-B(t)+H^{w},B(t)]$.
Finally, one has has
\begin{equation}\label{eq:Toda-pre-linear}
B'(t)=[-B(t)+H^{w},B(t)]=[H^{w},B(t)],
\end{equation}
which is the linear vector field in (\ref{eq:Toda-linear-coord}).
Its crical point and its stable and unstable manifolds are easily described.

\end{proof}

\begin{remark} (The two atlases and the action of $\A$)
 The last equality in (\ref{eq:Toda-pre-linear}) 
can be interpreted in the following conceptual manner: 
The adjoint action of $\overline{\N}$ on $H^{w}+\overline{\nn}$ is free so any vector at a point in $H^{w}+\overline{\nn}$
is the value of a unique fundamental vector field. The Iwasawa fiber is also stable by the adjoint action 
of $\U$, and thus one gets extra freedom in the choice of fundamental vector field. That freedom is used in (\ref{eq:Toda-pre-linear}) 
to trade a vector in $\overline{\nn}\subset \uu$ by a (constant!) vector in $\aa\subset \uu$. The conclusion is that
$\varphi_w:\cU_w\to H^{w}+\overline{\nn}$ intertwines the Toda vector field and the fundamental vector field 
of $H^{w}\in \aa$ for the adjoint action of $\A$ on $H^{w}+\overline{\nn}$. If we further pass from the affine to the vector space picture
$H^{w}+\overline{\nn}\to \overline{\nn}$, we obtain the fundamental vector field of $H^{w}\in \aa$ for the adjoint action of $\A$ on $\overline{\nn}$. 

The Bruhat coordinates on $\cL_{w}$ intertwine the fundamental vector fields for the left action of $\A$ on $\K/\M$ 
(defined via the diffeomorphism
$\K/\M\cong \G/\B$) and for the adjoint action of $\A^{w^{-1}}$ on $\overline{\nn}$  \cite[Lemma 7.1]{DKV}. In particular 
$\psi_{w^{-1}}:\cL_{w^{-1}}\to \overline{\nn}$ intertwines the the fundamental 
vector field of $H\in \aa$ for the left action on $\K/\M$ and the fundamental vector
field of $H^{w}\in \aa$ for the adjoint action of $\A$ on $\overline{\nn}$. 

This leads to diffeomorphisms $\psi_{\w^{-1}}^{-1}\circ \varphi_w:\cU_w\to \cL_{w^{-1}}$ which are  different from the diffeomorphisms $\tau_w$
in (\ref{eq:un-to-nu}). (The differential  of $\psi_{\w^{-1}}^{-1}\circ \varphi_w\circ \tau_w^{-1}$ at $H^{w^{-1}}$ is not the identity). 
\end{remark}

%
%
%
%

Theorem \ref{thm:st-ust}, relating Bruhat and opposite Bruhat cells to unstable and stable manifolds of $\cT$, follows from
Theorem \ref{thm:lin-atlas} and the results in Section \ref{sec:atlas}.

\begin{proof}[Proof of Theorem \ref{thm:st-ust}]
By Proposition \ref{pro:atlas-Bruhat}, $\varphi_w(\cB_w)=H^w+\overline{\nn}(\overline{w})$. By Theorem \ref{thm:lin-atlas}, the right hand side is
the unstable manifold at $H^w$ of the image of $\cT$ by $\varphi_w$. Hence
the unstable manifold of $\cT$ at $H^w\in \cO$ is the Bruhat cell $\cB_w$. Likewise, the stable manifold at $w$
is  $\overline{\cB}_w$. The Morse-Smale property follows from the transverse
intersection of Bruhat cells and opposite Bruhat cells into Richardson cells (see e.g. \cite{R} or \cite[Lemma 4.2]{DKV}).
\end{proof}

\noindent {\bf Remark 1.} (Comparison with earlier results)

In \cite{SV} the equality between Bruhat cells and unstable manifolds for 
$\sl(n,\R)$ and $\sl(n,\C)$ is proved in two steps:
\begin{itemize}
 \item It is shown than for any invertible matrix  whose eigenvalues have different norm, left multiplication
defines a hyperbolic dymamical system with unstable manifolds (conjugated to) the Bruhat cells \cite[Theorem 1]{SV}; 
for determinant one matrices  
this is an immediate consequence of the earlier
infinitesimal results of \cite[Section 3]{DKV} which are valid for arbitrary non-compact semisimple Lie algebras (cf. Remark 3).
\item The factorization of integral curves of the Toda vector field \cite{RS,Sy} is used to deduce that for a given
regular hyperbolic element the same diagonal matrix 
is the common limit of a Toda trajectory through it and of the iterations 
of left multiplication by a related invertible invertible matrix  whose eigenvalues have different norm \cite[Pages 190 to 192 and Corollary 1]{SV}.
\end{itemize}

 In \cite{CSS1} the equality between Bruhat cells and unstable manifolds is proved first for $\sl(3,\R)$ and $\sl(4,\R)$ by direct computations,
and then for $\sl(n,\R)$ by an inductive argument; the case of rank 2 non-compact semisimple Lie algebras in \cite{CSS2,CSS3} is reduced to $\sl(n,\R)$ by
means of appropriate embeddings.

Both approaches rely on properties which seem specific of matrix calculus for the general/special linear group. 
Moreover, what is addresed 
is the analysis of stable and unstable manifolds of the Toda vector field, and not its linearisation in appropriate coordinates
as described in Theorem \ref{thm:lin-atlas}.

\noindent {\bf Example 3.} We illustrate Theorem \ref{thm:lin-atlas} for the chart $\varphi_e^{-1}$ of $\cO\subset \sl(2,\R)$ computed
in Example 2.
The expression for the Toda vector field $\mathcal{T}(Y)=[Y,\pi_\mathfrak{k}Y]$,  for $Y$ a symmetric $2\times 2$ tracesless matrix is:
\begin{equation}\label{eq:Toda2x2}\mathcal{T}(Y)=\begin{pmatrix} 2b^2 & -2ab \\ -2ab & -2b^2\end{pmatrix},
\quad Y=\begin{pmatrix} a & b \\ b & -a\end{pmatrix}.
\end{equation}
In the domain of the chart corresponding to the identity element of the Weyl group $\varphi^{-1}_e:H+\overline{n}\to \cU_e$
the linear vector field coming from Theorem  \ref{thm:lin-atlas} is:
\[\begin{pmatrix} \lambda & 0 \\ x & -\lambda \end{pmatrix}\mapsto \begin{pmatrix} 0 & 0  \\ -2\lambda x & 0\end{pmatrix},\quad \lambda>0,\,x\in \R. \]
By (\ref{eq:chart}) its pullback by $\varphi_e^{-1}$ is

\[  \frac{-2\lambda x}{\left(1+\tfrac{x^2}{4\lambda^2}\right)^2}
\begin{pmatrix}
           -\tfrac{x}{2\lambda}(1+\tfrac{x^2}{4\lambda^2})-(\lambda-\tfrac{x^2}{4\lambda^2})\tfrac{x}{2\lambda^2} &
           (1+\tfrac{x^2}{4\lambda^2})-x\tfrac{x}{2\lambda^2}\\   (1+\tfrac{x^2}{4\lambda^2})-x\tfrac{x}{2\lambda^2}
          & \tfrac{x}{2\lambda}(1+\tfrac{x^2}{4\lambda^2})+(\lambda-\tfrac{x^2}{4\lambda^2})\tfrac{x}{2\lambda^2}
          \end{pmatrix}= \]
 \[         =\frac{-2\lambda x}{\left(1+\tfrac{x^2}{4\lambda^2}\right)^2}
          =\left(1+\tfrac{x^2}{4\lambda^2}\right)^2\begin{pmatrix}
          2x^2 & -2x\left(\lambda-\tfrac{x^2}{4\lambda}\right)\\
         -2x\left(\lambda-\tfrac{x^2}{4\lambda}\right) & -2x^2
          \end{pmatrix}.
 \]

This vector field equals the result of replacing $Y$ by $\varphi_e^{-1}$ in (\ref{eq:Toda2x2}), which proves that $\varphi_e$ linearizes
the Toda vector field for $\sl(2,\R)$.

\section{Hessenberg-type manifolds}\label{sec:Hessenberg}

In this section we show how our atlas is adapted to the Hessenberg-type submanifolds of the manifold of real full flags $\cO$.

An (upper) Hessenberg matrix $M\in \M(n,\R)$ is a matrix all whose entries below the subdiagonal vanish. All upper triangular matrices are of
Hessenberg type. It is natural to consider other subspaces of matrices which contain the upper triangular matrices by generalizing
the Hessenberg condition on vanishing entries: we define a \emph{profile} $\p$ to be a subset of indices $\p\subset \{1,\dots,n\}^2$
which has the following properties.
\begin{enumerate}[(a)]
 \item $(i,j)\in {\bf\mathrm{p}}\Longrightarrow i>j$;\vskip .3cm
 \item  $(i,j)\in {\bf\mathrm{p}},\,\,  i\geq \tilde{i}>\tilde{j}\geq j
       \Longrightarrow (\tilde{i},\tilde{j})\in {\bf\mathrm{p}}$.
\end{enumerate}
Profiles inherit the partial order given by inclusion. The \emph{support} of a matrix $M$ is the smallest
profile  ${\bf\mathrm{p}}$ such that
$i>j\,\,\,\mathrm{and}\,\,\, M_{ij}=0\Longrightarrow (i,j)\notin {\bf\mathrm{p}}$.
The vector subspace of \emph{matrices with profile ${\bf\mathrm{p}}$} is
\[V_{\bf\mathrm{p}}=\{M\in \mathrm{M}(n,\R)\,|\, \mathrm{supp}(M)\subset {\bf\mathrm{p}}\}.\]
For  $\mathrm{p}=\{(n,1)\}$ we have $V_{\bf\mathrm{p}}=\mathrm{M}(n,\R)$.
Hessenberg matrices equal $V_p$ for $\p=\{(2,1),(3,2),\dots,(n,n-1)\}$. In general $V_\p$ are matrices whose lower
triangular entries avoiding certain square submatrices aligned along the subdiagonal are trivial.

The definition of
a profile is Lie theoretic and can be extended to any non-compact real semisimple Lie algebra $\gg$ \cite[Section 3]{MP}.
Upon a choice of root
(partial) order $(\Sigma,\preccurlyeq)$, the root space $\Sigma$ is the union of the positive and negative roots,
$\Sigma=\Sigma^+\cup \Sigma^-$. A \emph{profile} $\mathrm{p}$ for $\gg$ is a subset of $\Sigma$ which has the following properties.
 \begin{enumerate}[(a)]
 \item $\a \in {\bf\mathrm{p}}\Longrightarrow \a\in \Sigma^-$;\vskip .3cm
 \item  $   \a\in {\bf\mathrm{p}},\,\,
     \beta\in \Sigma^-,\,\,\a\preccurlyeq\ \beta  \Longrightarrow \beta\in {\bf\mathrm{p}}$.
\end{enumerate}
The \emph{support} of $X=\sum_{\alpha\in \Sigma} X_\alpha \in \gg$ is the smallest profile $\mathrm{p}$ such that
\[\beta\in \Sigma^-\,\,\,\mathrm{and}\,\,\,X_\beta=0 \Longrightarrow \beta\notin \mathrm{p}.\]
The vector subspace of \emph{elements with profile ${\bf\mathrm{p}}$} is
\begin{equation}\label{eq:profile}
V_{\bf\mathrm{p}}=\{X\in \gg\,|\, \mathrm{supp}(M)\subset {\bf\mathrm{p}}\}.
\end{equation}

\begin{definition}\label{def:isospectral}\quad
Let $\gg$ be a non-compact real semisimple Lie algebra and let $\p$ be a profile for $\gg$. The Hessenberg-type and non-compact Hessenberg-type
subsets with profile $\p$ are the intersections
\[\cO_\p=\cO\cap V_\p,\quad \cO^\G_\p=\cO^\G\cap V_\p,\]
where $\cO\subset \cO^\G$
are the adjoint $\K$ and $\G$ orbits of a regular element in the positive Weyl chamber.
\end{definition}

One can deduce from results in  \cite[Proposition 7.1]{MP}  that Hessenberg-type subsets are submanifolds 
(see Remark \ref{rem:Hessenberg-type}). Below we provide an alternative proof.

\begin{lemma}\label{pro:Hess-manifold}  Let $Y\in \cO$ and let $\mathrm{p}$ be a profile for $\gg$.
The $\K$-orbit of $Y$ and the vector subspace $V_\mathrm{p}$
  have transverse intersection. Therefore $\cO_\p$ and $\cO^\G_\p$
  are manifolds.
\end{lemma}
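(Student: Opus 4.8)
The plan is to use that a transverse intersection of submanifolds is again a submanifold, so that it suffices to establish the transversality assertion; in fact I would prove the slightly stronger statement that the $\G$-orbit $\cO^\G$ meets $V_\p$ transversally and deduce the $\K$-orbit case from it. Writing $Y$ for a point of the relevant intersection, the tangent spaces are $T_Y\cO=[\kk,Y]$ and $T_Y\cO^\G=[\gg,Y]$, so the two statements to prove are $[\kk,Y]+V_\p=\gg$ for $Y\in\cO\cap V_\p$ and $[\gg,Y]+V_\p=\gg$ for $Y\in\cO^\G\cap V_\p$. Every such $Y$ is regular (it is conjugate to $H$), a fact I will use throughout.

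The key structural observation, which I would isolate first, is that $V_\p$ is invariant under the adjoint action of the minimal parabolic subalgebra $\mathfrak{q}:=\mm\oplus\aa\oplus\nn=\gg_0\oplus\nn$. Indeed $V_\p=\mathfrak{q}\oplus\bigoplus_{\beta\in\p}\gg_\beta$, and to check $[\mathfrak{q},V_\p]\subseteq V_\p$ I would argue root space by root space: a bracket $[\gg_\delta,\gg_\gamma]\subseteq\gg_{\delta+\gamma}$ with $\delta\succcurlyeq 0$ either vanishes or lands in $\gg_{\delta+\gamma}$; if $\gamma\succcurlyeq 0$ then $\delta+\gamma\succcurlyeq 0$ lies in $\mathfrak{q}\subseteq V_\p$, while if $\gamma\in\p$ and $\delta+\gamma\in\Sigma^-$ then $\gamma\preccurlyeq\delta+\gamma$ forces $\delta+\gamma\in\p$ by property (b) of a profile. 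This is precisely where the defining axioms of a profile enter, and I expect it to be the main (and only genuinely Lie-theoretic) obstacle.

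With this in hand I would first treat the $\G$-orbit. Dualizing with respect to the Killing form $\kappa$, the relation $[\gg,Y]+V_\p=\gg$ is equivalent to $[\gg,Y]^{\perp}\cap V_\p^{\perp}=0$. Invariance of $\kappa$ gives $[\gg,Y]^{\perp}=\mathfrak{z}(Y)$, the centralizer of $Y$, while the pairing of opposite root spaces gives $V_\p^{\perp}=\bigoplus\{\gg_\gamma:\gamma\in\Sigma^+,\ -\gamma\notin\p\}\subseteq\nn$. Since $Y$ is regular, its centralizer $\mathfrak{z}(Y)$ is a Cartan subalgebra; being $\G$-conjugate to $\gg_0=\mm\oplus\aa$ it consists entirely of semisimple elements, whereas every nonzero element of $\nn$ is nilpotent. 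Hence $\mathfrak{z}(Y)\cap\nn=0$ and a fortiori $\mathfrak{z}(Y)\cap V_\p^{\perp}=0$, so $\cO^\G$ and $V_\p$ intersect transversally at every point of $\cO^\G\cap V_\p$.

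Finally I would deduce the $\K$-orbit statement. Let $Y\in\cO\cap V_\p$; then $Y\in\cO^\G\cap V_\p$, and since $Y\in V_\p$ with $V_\p$ being $\ad(\mathfrak{q})$-invariant we get $[\mathfrak{q},Y]\subseteq V_\p$. Because $\gg=\kk+\mathfrak{q}$ and $\ad(\cdot)Y$ is linear,
\[
[\kk,Y]+V_\p\supseteq[\kk,Y]+[\mathfrak{q},Y]+V_\p=[\gg,Y]+V_\p=\gg,
\]
the last equality being the $\G$-orbit transversality just proved. Thus $[\kk,Y]+V_\p=\gg$, so the $\K$-orbit meets $V_\p$ transversally, and both $\cO_\p=\cO\cap V_\p$ and $\cO^\G_\p=\cO^\G\cap V_\p$ are manifolds. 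In summary, once the parabolic invariance of $V_\p$ is secured, transversality for the large orbit reduces to a semisimple-versus-nilpotent dichotomy, and the small-orbit case follows formally.
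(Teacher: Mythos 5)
Your proof is correct, and its key step is genuinely different from the paper's. The paper makes the same first reduction as you do (replacing $[\kk,Y]$ by $[\gg,Y]$ using $\gg=\kk\oplus\uu$ and the $\uu$-invariance of $V_\p$, which it asserts without the root-by-root verification you supply via the profile axioms), but it then proves $[\gg,Y]+V_\p=\gg$ by writing $Y=(H^w)^{u\overline{n}}$ via the cover $\cU_w$ and the $\U\overline{\N}$ factorization of Section \ref{sec:lu}, conjugating by $u^{-1}$, and observing that $[\overline{\nn},H^w]+V_\p=\overline{\nn}+V_\p=\gg$. You instead dualize with the Killing form, identify $[\gg,Y]^{\perp}$ with the centralizer and $V_\p^{\perp}$ with a subspace of $\nn$, and conclude from the semisimple-versus-nilpotent dichotomy. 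Your route is more self-contained (it does not invoke the atlas or the factorization machinery) and it establishes transversality of $\cO^\G$ with $V_\p$ at \emph{every} point of $\cO^\G\cap V_\p$, which is what the smoothness of $\cO^\G_\p$ actually requires; the paper's write-up only treats $Y\in\cO$ explicitly. The paper's argument, in exchange, is shorter given the tools already developed and stays entirely in the affine picture of the Iwasawa rulings. One small correction: for non-split $\gg$ the centralizer of $Y$ is conjugate to $\gg_0=\mm\oplus\aa$, which is a Levi subalgebra but not a Cartan subalgebra unless $\mm$ is abelian; this does not affect your argument, since the property you actually use --- that all elements of $\mm\oplus\aa$ are semisimple, being commuting sums of elliptic and hyperbolic parts --- is exactly the one you justify by the conjugacy to $\gg_0$.
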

\begin{proof}
We may assume that $Y\in \mathrm{p}$. The tangent space of the $\K$-orbit at $Y$ is $[\kk,Y]$ so we
must show
\begin{equation}\label{eq:transversality-p}
\gg=[\kk,Y]+V_\p.
\end{equation}
Since $\gg=\kk\oplus \uu$, $Y\in V_p$ and the action of $\uu$ preserves $V_\p$, equality (\ref{eq:transversality-p})
is equivalent to
\begin{equation}\label{eq:transversality-p-full}
\gg=[\gg,Y]+V_\p.
\end{equation}
Since the subsets $\cU_w$ cover $\cO$ there exists $w\in \W$,  $k\in \cC,\,h\in \overline{\N}$ so that
$Y={(H^{w})}^{kh}$.
Equivalently, $Y={(H^w)}^{u\overline{n}}$, $u\in \U,\,\overline{n}\in \overline{\N}$.
Upon conjugating  (\ref{eq:transversality-p-full}) by $u^{-1}$ and using that the action of $\U$ preserves $V_\p$, we are led to 
prove the equality 
\[\gg=[\gg,{(H^w)}^{\overline{n}}]+V_\p.\]
Now
\[[\gg,{(H^w)}^{\overline{n}}]+V_\p\supset [\overline{\nn},H^w]+V_\p=\overline{\nn}+ V_\p=\gg,\]
which proves the equality.
\end{proof}

We are ready to collect the results announced in Theorem \ref{thm:atlas} in the introduction.
\begin{proof}[Proof of Theorem \ref{thm:atlas}]
Item (i) is the content of Lemma \ref{lem:cover} and Definition \ref{def:atlas}. Item (ii) is the content of Theorem \ref{thm:lin-atlas}. We observe that
$\cT$ on $\cU_w$ is complete because it corresponds to a linear vector field on a whole Euclidean space.

Let us denote $\cU_{w,\p}=\cU_w\cap V_\p$, $\overline{\nn}_\p=\overline{\nn}\cap V_\p$.
By construction the subsets
$\cU_{w,\p}$, $w\in W$, define an open cover of $V_\p$. Both the local coordinates $\varphi_w:\cU_w\to H^{w}+\overline{\nn}$ and its inverse 
diffeomorphism are defined by conjugating with appropiate elements in $\U$. Because the action of $\U$ preserves 
we conclude that $\varphi_w$ restricts to $\cU_{w,\p}$ to a diffeomorphism onto $H^w+\overline{\nn}_\p$.
Therefore the atlas $(\cU_w,\varphi_w)$, $w\in \W$, is adapted to $\cO_\p$
and the image of $\cU_{w,\pp}$ is a full affine subspace of $\overline{\nn}$.

Since $V_\p$ is preserved by the action of $\U$, the Toda vector field $\cT$ is tangent to the manifolds $\cO_\p$ (and $\cO^\G$). Hence
$\varphi_w$ takes $\cT$ on $\cU_{w,\pp}$ to a linear vector field on $H^{w}+\overline{\nn}_\p$. In other words,
the atlas also linearizes the restriction of $\cT$ to the Hessenberg-type submanifolds $\cO_\p\subset \cO$.
\end{proof}

\noindent {\bf Remark 2.}  Hessenberg-type subspaces for $\gg$ are $\uu$-modules and the two notions match if and only if $\gg$
has 1-dimensional root spaces. The results in this Section and in Section \ref{sec:flow} for Hessenberg-type submanifolds remain
valid if we replace Hessenberg-type subspaces by  $\uu$-modules.

\begin{remark}\label{rem:Hessenberg-type} (Hessenberg-type submanifolds versus real Hessenberg varieties)
There is a second family of subsets that one can associate to a profile $\p$ and a regular element $H\in \aa$. According to \cite{MP}, 
the real Hessenberg variety 
is  the
subset of $\K/\M$
\[\mathrm{Hess}_\p(H)=\{kM\in \K/\M\,|\, H^{(k^{-1})}\in V_\p\}.\]
Real Hessenberg varieties are smooth \cite[Proposition 7.1]{MP}. One can deduce from that the smoothness of Hessenberg-type subsets in two
different ways. Firstly, the smoothness of $\mathrm{Hess}_\p(H)$ and of $\cO_\p\subset \cO\cong \K/\M$ is equivalent to the smoothness of their respective
preimages 
in $\K$ by the quotient map $\K\to \K/\M$, and the inversion map on $\K$ establishes a bijection between the preimages. Secondly, one can 
relate the subsets without passing to preimages:  the diffeomorphism $\tau_w:\cU_w\to \cL_{w^{-1}}$ induces
a bijection from $\cU_{w,\p}$ to $\mathrm{Hess}_\p(H)\cap \cL_{w^{-1}}$. The proof of the smoothness of $\mathrm{Hess}_\p(H)$ in  
\cite[Proposition 7.1]{MP} is done using the cover $\cL_w$, $w\in W$, and Lemma \ref{pro:Hess-manifold} uses the cover $\cU_w$, $w\in W$. 
One can see that both proofs are related by $\tau_w$.

Finally, we point out that the manifolds $\cO_\p$ and $\mathrm{Hess}_\p(H)$ may not be homeomorphic \cite[Example 3.13]{AB}. 
\end{remark}

\section{The contracting flow on the open Hessenberg-type manifolds}\label{sec:flow}

The Iwasawa fibration makes $\cO^{G}\to \cO$ into a vector bundle. However, this vector bundle structure 
does not induce a vector bundle structure on the non-compact Hessenberg-type manifold (over the compact one). 
The rank of the
intersection of the linear Iwasawa fibers with $V_\p$ varies. Equivalently, the Euler vector field which 
encodes the vector bundle structure on $\cO^{G}\to \cO$ is not tangent to $\cO^\G_\p$.

In this section we describe a natural fibration from the noncompact to the compact Hessenberg-type manifolds:
\begin{equation}\label{eq:Hess-submersion}
 \cO^\G_\p\to \cO_\p.
\end{equation}
It will be induced by the flow of a vector field on $\cO^\G$ which is tangent to $\cO^\G_\p$.


We define the vector field on $\gg$
\begin{equation}\label{eq:contracting}
\mathcal{S}(X)=[X,\pi_\uu [X,-\theta X]],
\end{equation}
where $\pi_\uu$ is the second projection associated to the decomposition $\gg=\kk\oplus \uu$.
For $\gg=\mathfrak{sl}(n,\R)$ this vector field reads (cf.  \cite[Equation (1.2)]{AE})
\[\mathcal{S}(X)=[X,\pi_\uu [X,X^T]],\]
where $X^{T}$ is the transpose matrix of $X$.

Here is an account of its basic properties.
\begin{lemma}\label{lem:hypbasic}
The vector field $\mathcal{S}$ in (\ref{eq:contracting}) has the following properties.
\begin{enumerate}
\item  It is tangent to $\cO_\p^\G$,  where $\p$ is any profile for $\gg$ 
 \item The norm square $||\cdot ||^2$ (with respect to $B_\theta (\cdot,\cdot)=-\langle \cdot,\theta \cdot\rangle $)
 is monotone decreasing on the trajectories of $\mathcal{S}$.
 \item The zeroes of $\mathcal{S}$ are the elements 
 $X\in \gg$ such that $[X,\theta X]=0$, which will be called normal.
 \item The intersection of normal elements with regular hyperbolic elements is the subset $\pp^r$ of regular elements of $\pp$.
\end{enumerate}

\end{lemma}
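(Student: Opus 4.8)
The plan is to verify the four properties of the vector field $\mathcal{S}(X)=[X,\pi_\uu[X,-\theta X]]$ essentially in order, exploiting the invariant bilinear form $B_\theta(\cdot,\cdot)=-\langle\cdot,\theta\cdot\rangle$, which is a positive-definite inner product because $\theta$ acts as $+1$ on $\kk$ and $-1$ on $\pp$ while the Killing form $\langle\cdot,\cdot\rangle$ is negative-definite on $\kk$ and positive-definite on $\pp$. The key structural fact I would use throughout is that $\mathcal{S}$ is in Lax form, $\mathcal{S}(X)=[X,A]$ with $A=\pi_\uu[X,-\theta X]\in\uu$, so its flow is by conjugation and hence preserves every adjoint orbit; combined with the fact (used already in the proof of Lemma \ref{pro:Hess-manifold}) that the adjoint action of $\U$ preserves $V_\p$, this gives tangency to $\cO^\G_\p=\cO^\G\cap V_\p$, yielding item (1).

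For item (2) I would compute the derivative of $\|X(t)\|^2=B_\theta(X,X)$ along a trajectory. Writing $X'=[X,A]$ with $A\in\uu$, invariance of the Killing form gives
\begin{equation*}
\tfrac{d}{dt}B_\theta(X,X)=-2\langle[X,A],\theta X\rangle=-2\langle A,[\theta X,X]\rangle=2\langle A,[X,\theta X]\rangle.
\end{equation*}
Now $[X,\theta X]=[X,-(-\theta X)]$, and I would recognize $[X,-\theta X]$ as the bracket whose $\uu$-component is exactly $A$. The decomposition $\gg=\kk\oplus\uu$ is orthogonal for $B_\theta$ and the $\kk$-part of $[X,-\theta X]$ is $B_\theta$-skew to $\uu$ in the right sense, so pairing against $A\in\uu$ picks out $\langle A, A\rangle$-type terms; carrying this through carefully shows the derivative equals $-2\,B_\theta(A,A)\le 0$, with equality iff $A=0$. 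This establishes monotonicity and simultaneously foreshadows item (3).

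Item (3) then follows: $\mathcal{S}(X)=0$ forces $A=\pi_\uu[X,-\theta X]=0$, and the monotonicity computation shows $A=0$ is equivalent to $[X,\theta X]=0$ (the $\kk$-component of $[X,-\theta X]$ vanishes automatically since $[X,-\theta X]$ is $B_\theta$-skew, or can be checked directly), so the zero set is exactly the normal elements. The cleanest route is to show $\pi_\uu[X,-\theta X]=0\iff[X,\theta X]=0$ by a symmetry argument: apply $\theta$ to $[X,-\theta X]$ and use $\theta[X,Y]=[\theta X,\theta Y]$ to see that $[X,\theta X]$ is $\theta$-anti-invariant in a way that ties its $\kk$- and $\uu$-components together.

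For item (4), let $X$ be regular hyperbolic, so $X$ is $\K$-conjugate to a regular element $H\in\aa\subset\pp$. Since $\mathcal{S}$, the norm, and the normality condition are all $\K$-equivariant (because $\theta$ is $\K$-invariant and $B_\theta$ is $\Ad(\K)$-invariant), I may test normality on the orbit representative. For $X\in\pp$ one has $\theta X=-X$, so $[X,\theta X]=-[X,X]=0$ automatically, showing $\pp^r\subset$ normal regular hyperbolic elements. Conversely, if a regular hyperbolic $X$ is normal, I would argue $[X,\theta X]=0$ means $X$ and $\theta X$ lie in a common maximal torus; since $X$ is regular its centralizer is a Cartan subalgebra, forcing $\theta X$ to be a function of $X$, and regular hyperbolicity pins down $\theta X=-X$, i.e. $X\in\pp$. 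The main obstacle I anticipate is item (2): correctly bookkeeping the $\kk$-versus-$\uu$ components of $[X,-\theta X]$ under the two different pairings (the Killing form and $B_\theta$) to extract the clean expression $-2\,B_\theta(A,A)$, since the orthogonal decomposition for $B_\theta$ and the bracket-invariance of the Killing form must be combined without sign errors.
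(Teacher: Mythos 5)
Your overall strategy coincides with the paper's (Lax form plus $\U$-invariance of $V_\p$ for (1), differentiating the norm for (2), deducing (3) from (2), and a centralizer argument for (4)), but two of the steps as written are genuinely wrong. In item (2), the decomposition $\gg=\kk\oplus\uu$ is \emph{not} $B_\theta$-orthogonal: the $B_\theta$-orthogonal complement of $\kk$ is $\pp$, not $\uu$ (already in $\sl(2,\R)$ one has $B_\theta(E_{12}-E_{21},E_{12})\neq 0$). Consequently your claimed value $-2B_\theta(A,A)$ with $A=\pi_\uu[X,-\theta X]$ is not the derivative of $\|X\|^2$; a root-space computation shows $B_\theta(A,A)$ and $B_\theta(Z,Z)$, $Z=[X,-\theta X]$, differ by a factor of $2$ on the off-$\aa$ part. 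The correct bookkeeping, which is what the paper does, is to note that $\theta Z=[\theta X,-X]=-Z$, so $Z\in\pp$; then $\langle A,Z\rangle=\langle \pi_\uu Z,Z\rangle=\langle Z,Z\rangle$ because the discarded term $\langle\pi_\kk Z,Z\rangle$ vanishes by Killing-orthogonality of $\kk$ and $\pp$, and one gets $\tfrac{d}{dt}\|X\|^2=-2B_\theta(Z,Z)\le 0$. Your item (3) also opens with the non sequitur that $\mathcal{S}(X)=[X,A]=0$ forces $A=0$ (a priori $A$ could merely centralize $X$); the correct deduction, which you do gesture at, is that $\mathcal{S}(X)=0$ makes the derivative of the norm vanish, hence $B_\theta(Z,Z)=0$ and $Z=0$.

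Item (4) has a circularity: a regular hyperbolic element is $\G$-conjugate to a regular $H\in\aa$, and being $\K$-conjugate to such an element is \emph{equivalent} to lying in $\pp$, which is the conclusion you are trying to reach; so you cannot start by assuming $X={H}^{k}$ with $k\in\K$. Moreover the crux of the converse direction --- ``regular hyperbolicity pins down $\theta X=-X$'' --- is asserted rather than proved ($\theta X\in\gg_X$ does not make $\theta X$ ``a function of $X$''). The paper's argument supplies the missing steps: normality gives $\theta X\in\gg_X$, and since $\gg_X$ is an abelian Cartan subalgebra, $Y\in\gg_X$ implies $[Y,\theta X]=0$, hence $[\theta Y,X]=0$, so $\gg_X$ is $\theta$-stable; a $\theta$-stable Cartan subalgebra splits as $(\gg_X\cap\kk)\oplus(\gg_X\cap\pp)$, this splitting realizes the elliptic--hyperbolic--nilpotent decomposition $X=X_e+X_h+0$, and hyperbolicity forces $X_e=0$, i.e.\ $X\in\pp$. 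You should incorporate this $\theta$-stability argument; without it the inclusion of normal regular hyperbolic elements into $\pp^r$ is unproved.
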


\begin{proof}
Since $\mathcal{S}$ is in Lax form it is tangent to $\cO^\G$. Since  $\pi_u [X,-\theta X]\in \uu\subset \gg$, if $X\in V_\p$ then
$\mathcal{S}(X)\in V_\p$, which proves (1).

By differentiating the norm square along a trajectory $X(t)$ we obtain
\[\begin{split}
\frac{d}{dt}||X(t)||^2 &=2B_\theta(X'(t),X(t))=-2\langle [X,\pi_\uu[X,-\theta X]],-\theta X\rangle=\\
&=2\langle \pi_\uu[X,-\theta X],[X,-\theta X]\rangle= -2B_\theta([X,-\theta X],[X,-\theta X]).\\
  \end{split}
\]
In the last equality we used that $[X,-\theta X]\in \pp$ and that $\kk$ and $\pp$ are $B_\theta$-orthogonal so that  adding $\pi_\kk$ in the first entry
does not modify the result of the inner product. This proves assertion (2).

Normal elements are clearly zeroes of $\mathcal{S}$. Conversely, if $X$ is not normal,  by item (2)
the norm square decreases along the trajectory
through $X$  around $X$.

Let $X$ be a regular hyperbolic normal element. That $X$ be normal means that $\theta X$ belongs to the centralizer $\gg_X$ of $X$ in $\gg$.
We claim that the Cartan involution preserves $\gg_X$. To
prove that we use that $\gg_X$ is a Cartan subalgebra: if $Y\in \gg_X$, since $\theta X\in \gg_X$ we must have
\[[Y,\theta X]=0\Rightarrow [\theta Y,X]=0.\]
Any $\theta$-stable Cartan subalgebra decomposes into direct summands in $\kk$ and $\pp$.
Therefore $X=X_e+X_h+0$, $X_e\in \kk$, $X_h\in \pp$, is the decomposition of $X$ into commuting elliptic, hyperbolic and nilpotent
summands \cite[Section 2]{S}.
But because $X$ is hyperbolic $X_e$ must be trivial, and $X=X_h\in \pp$
($X_h$ is regular and therefore $\gg_X$ must be maximally non-compact: $\gg_X\cap \pp=\aa$).
\end{proof}

By item (1) in Lemma \ref{lem:hypbasic} the vector field $\mathcal{S}$ is tangent to the adjoint orbit
$\cO^\G$. By items (3) and (4) in Lemma \ref{lem:hypbasic}
the restriction of $\mathcal{S}$ to $\cO^\G$ vanishes on  $\cO$.
Therefore  the restriction of $\mathcal{S}$ to $\cO^G$ has an  intrinsic linearization along the submanifold $\cO$:
$\nabla\mathcal{S}: T\cO^\G|_{\cO}\to T\cO^\G|_{\cO}$.
The explicit formula at $X\in \cO$ is
\[(\nabla\mathcal{S})_X Y=-[X,\pi_\uu[X,Y+\theta Y]], \quad Y\in T_X\cO^\G.\]

\begin{proposition}\label{pro:negeigen} Let $H$ be a regular element in the positive Weyl chamber and $k\in \K$.
Then
\begin{equation}\label{eq:eigen-subbundles}T_{H^k}\cO^\G=
\pp\oplus\mathrm{Im}(\nabla\mathcal{S})_{H^k}=\pp\oplus \sum_{\alpha\in \Sigma^+}\mathrm{Im}\left((\nabla\mathcal{S}_{H^{k}})|_{\gg_\alpha^k}\right)
\end{equation}
is a as direct sum decomposition in eigenspaces of $(\nabla\mathcal{S})_{H^k}$ for the zero eigenvalue and strictly negative eigenvalues.
\end{proposition}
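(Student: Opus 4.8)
The plan is to fix $A=H^k\in\pp$, which is regular, and to analyse the linear operator $\Lambda:=(\nabla\mathcal{S})_{A}$ on $T_A\cO^\G=[\gg,A]$ through the $B_\theta$-orthogonal splitting $T_A\cO^\G=\kk_A\oplus\pp_A$, where $\kk_A=\kk\cap T_A\cO^\G$ and $\pp_A=\pp\cap T_A\cO^\G=T_A\cO$. The summand written $\pp$ in the statement is precisely $\pp_A$. Since $Y+\theta Y=2\,p_\kk Y$ with $p_\kk$ the $B_\theta$-orthogonal projection onto $\kk$, the given formula reads $\Lambda Y=-2\,[A,\pi_\uu[A,p_\kk Y]]$; in particular $\Lambda$ kills $\pp_A$, which will be the zero eigenspace. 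Because $A\in\pp$ is regular, $\ad_A$ is injective on $T_A\cO^\G$ and interchanges $\kk_A$ and $\pp_A$, so in this splitting $\Lambda$ is block lower triangular, $\Lambda=\begin{pmatrix}\hat\Lambda&0\\ R&0\end{pmatrix}$, with $\hat\Lambda:=p_{\kk_A}\circ\Lambda|_{\kk_A}:\kk_A\to\kk_A$ and $R$ the $\pp_A$-component of $\Lambda|_{\kk_A}$.

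The heart of the argument is that $\hat\Lambda$ is $B_\theta$-self-adjoint and negative definite, even though $\Lambda$ itself is not self-adjoint. For $Y_1,Y_2\in\kk_A$ one computes $B_\theta(\Lambda Y_1,Y_2)=-2\,B_\theta(\pi_\uu[A,Y_1],[A,Y_2])$, using that $\ad_A$ is $B_\theta$-symmetric (as $A\in\pp$). Writing $V_i=[A,Y_i]\in\pp_A$ and $\pi_\uu V=V-\pi_\kk V$, the $B_\theta$-orthogonality of $\kk$ and $\pp$ makes the correction terms $B_\theta(\pi_\kk V_i,V_j)$ vanish, so $(Y_1,Y_2)\mapsto B_\theta(\pi_\uu[A,Y_1],[A,Y_2])$ is symmetric and $\hat\Lambda$ is self-adjoint. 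The same orthogonality gives $B_\theta(\pi_\uu V,V)=\|V\|^2$ for $V\in\pp$, whence $B_\theta(\hat\Lambda Y,Y)=-2\,\|[A,Y]\|^2<0$ for $Y\ne0$ (here $\ad_A$ is injective on $\kk_A$). This negativity is exactly the infinitesimal form of the strict decrease of $\|\cdot\|^2$ in Lemma \ref{lem:hypbasic}(2). Thus $\hat\Lambda$ has real, strictly negative spectrum and is diagonalizable.

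I then lift this to $\Lambda$. The block lower triangular shape gives $\det(\lambda I-\Lambda)=\lambda^{\dim\pp_A}\det(\lambda I-\hat\Lambda)$, so $\mathrm{spec}(\Lambda)=\{0\}\cup\mathrm{spec}(\hat\Lambda)$, all strictly negative off $0$. Since $\hat\Lambda$ is invertible, $\ker\Lambda=\pp_A$ exactly; and for each eigenvalue $\lambda<0$ of $\hat\Lambda$ with eigenvector $v\in\kk_A$, the vector $v+\lambda^{-1}Rv$ is a $\lambda$-eigenvector of $\Lambda$. These, together with $\pp_A$, form a basis, so $\Lambda$ is diagonalizable and $T_A\cO^\G=\pp_A\oplus\mathrm{Im}\,\Lambda$, with $\pp_A$ the zero eigenspace and $\mathrm{Im}\,\Lambda$ the sum of the strictly negative eigenspaces.

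Finally, the root refinement: for $Z\in\gg_\alpha$ one gets $\Lambda(Z^k)=-\alpha(H)\,[A,\pi_\uu((Z-\theta Z)^k)]$, so $\mathrm{Im}(\Lambda|_{\gg_\alpha^k})=[A,\pi_\uu\,\pp_A^\alpha]$ with $\pp_A^\alpha:=\pp\cap(\gg_\alpha^k\oplus\gg_{-\alpha}^k)$. Summing over $\Sigma^+$ and using $\pp_A=\bigoplus_\alpha\pp_A^\alpha$ yields $\sum_\alpha\mathrm{Im}(\Lambda|_{\gg_\alpha^k})=[A,\pi_\uu\,\pp_A]=\mathrm{Im}\,\Lambda$; since $\ker\Lambda=\pp_A$ meets each $\gg_\alpha^k$ trivially, $\Lambda|_{\gg_\alpha^k}$ is injective, so the dimensions $\dim\gg_\alpha$ add up to $\dim\mathrm{Im}\,\Lambda=|\Sigma^+|$ and the sum is direct, giving the displayed decomposition into negative eigenspaces. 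I expect the main obstacle to be exactly the lack of $\K$-equivariance of $\mathcal{S}$: the projection $\pi_\uu$ is taken along the fixed $\uu$, which is not $\Ad(\K)$-stable, so one cannot reduce to $k=e$. The device that circumvents this is the self-adjointness of the compression $\hat\Lambda$, which persists at every $H^k$ even though $\Lambda$ is non-normal there.
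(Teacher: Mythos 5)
Your proof is correct in its essentials but follows a genuinely different route from the paper's. The paper computes the square of the linearization, obtaining $\left((\nabla\mathcal{S})_{H^k}\right)^2Y=-2\alpha(H)^2(\nabla\mathcal{S})_{H^k}Y$ for $Y\in\gg_\alpha^k$, so that $\mathrm{Im}\left((\nabla\mathcal{S}_{H^{k}})|_{\gg_\alpha^k}\right)$ lands, in one line, inside the eigenspace for the explicit eigenvalue $-2\alpha(H)^2$; the decomposition then follows from a dimension count. You instead block-triangularize with respect to $\kk_A\oplus\pp_A$ and show that the compression $\hat\Lambda$ to $\kk_A$ is $B_\theta$-self-adjoint with $B_\theta(\hat\Lambda Y,Y)=-2\|[A,Y]\|^2<0$ — correctly identified as the infinitesimal form of item (2) of Lemma \ref{lem:hypbasic} — and then lift eigenvectors by $v\mapsto v+\lambda^{-1}Rv$. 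This buys something the paper leaves implicit: the injectivity of $(\nabla\mathcal{S})_{H^k}$ on $\kk_A$ (equivalently $\ker(\nabla\mathcal{S})_{H^k}=\pp_A$ exactly), which the paper's dimension count needs. Your root-space refinement via $\mathrm{Im}(\Lambda|_{\gg_\alpha^k})=[A,\pi_\uu\,\pp_A^\alpha]$ and $\pp_A=\bigoplus_\alpha\pp_A^\alpha$ is also sound, modulo the slip that $\dim\mathrm{Im}\,\Lambda=\sum_{\alpha\in\Sigma^+}\dim\gg_\alpha$, which equals $|\Sigma^+|$ only when $\gg$ is split.

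The one clause of the statement your argument does not deliver is that each individual summand $\mathrm{Im}\left((\nabla\mathcal{S}_{H^{k}})|_{\gg_\alpha^k}\right)$ is itself an eigenspace, i.e.\ a subspace on which $(\nabla\mathcal{S})_{H^k}$ acts by the single scalar $-2\alpha(H)^2$ (this is what the paper means, per the footnote in its proof). You prove that the total image is spanned by negative eigenvectors and that the $\alpha$-summands are independent, but the self-adjointness argument does not localize the spectrum to the individual summands: $\hat\Lambda$ need not preserve the decomposition of $\kk_A$ into $\alpha$-pieces, precisely because $\pi_\uu$ does not respect the conjugated root spaces $\gg_\alpha^k$. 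To recover this finer claim one essentially needs the paper's identity $\Lambda^2=-2\alpha(H)^2\Lambda$ on $\gg_\alpha^k$. For the downstream application (normal hyperbolicity in Theorem \ref{thm:Hess-submersion}) your version suffices, since only diagonalizability, the identification of the kernel with $T\cO$, and strict negativity of the remaining spectrum are used.
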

\begin{proof}
Given a linear map, the image of a vector is an eigenvector exactly when the linear map restricted to the line spanned by the vector is  
a multiple of a projection.
We want to apply that to $(\nabla\mathcal{S})_{H^{k}}$. 

In the computation below of the square of $\nabla\mathcal{S}$ at $X\in \cO$ applied to $Y\in T_X\cO^\G$ we use that the
decomposition  $\mathrm{I}=\pi_\kk+\pi_\uu$ implies the equality $\theta\pi_\uu=\theta-\pi_\kk$.
\[\begin{split}
\left((\nabla\mathcal{S})_X\right)^{2} Y &=-[X,\pi_\uu[X,-[X,\pi_\uu[X,Y+\theta Y]]+\theta(-[X,\pi_\uu[X,Y+\theta Y]])]]=\\
&=[X,\pi_\uu[X,[X,\pi_\uu[X,Y+\theta Y]]+[\theta X,\theta-\pi_\kk[X,Y+\theta Y]])]]=\\
&=[X,\pi_\uu[X,[X,\pi_\uu[X,Y+\theta Y]]+[X,[X,Y+\theta Y]]+[X,\pi_\kk[X,Y+\theta Y]])]]=\\
&=2[X,\pi_\uu[X,[X,[X,Y+\theta Y]]]].
\end{split}
\]
If $H$ is a regular element in the positive Weyl chamber, $X=H^k$, $k\in \K$,
and  $Y\in \gg^k_\alpha$, $\alpha\in \Sigma^+$,
then using $\theta \gg_\alpha^k=\gg_{-\alpha}^k$ we get
\[\left((\nabla\mathcal{S})_{H^{k}}\right)^{2} Y=2[X,\pi_\uu[X,[X,[X,Y+\theta Y]]]]=-2\alpha(H)^2(\nabla\mathcal{S})_{H^k}Y.\]
Therefore  $\mathrm{Im}\left((\nabla\mathcal{S}_{H^{k}})|_{\gg_\alpha^k}\right)$ is a subspace
of the eigenspace\footnote{We are abusing terminology in the statement of the proposition because the eigenspaces there are subspaces on which 
the linear map is a multiple of the identity, since different positive roots may have 
the same value on $H$.} of $(\nabla\mathcal{S})_{H^{k}}$ for the eigenvalue $-2\alpha(H)^2$. 
Since  $\pp$ is in the kernel of $(\nabla\mathcal{S})_{H^{k}}$ a dimension count implies
that (\ref{eq:eigen-subbundles}) is a direct sum decomposition.
\end{proof}

\begin{theorem}\label{thm:Hess-submersion}
The flow of $\mathcal{S}$ induces a locally trivial fibration structure
\begin{equation}\label{eq:submersion}
 \cO^\G\to \cO
\end{equation}
with fibre diffeomorphic to a Euclidean space.

Moreover, if $\p$ is any profile the fibration (\ref{eq:submersion}) restricts to a (pullback!) fibration
\begin{equation}\label{eq:Hess-submersion-pullback}
\cO_\p^\G\to \cO_\p=(\cO^\G\to \cO)|_{\cO_\p}
\end{equation}
whose fiber over each intersection point $H^w\in \cO_\p\cap \aa$ is $H^w+\nn$.
 \end{theorem}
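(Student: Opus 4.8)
The plan is to read $\mathcal{S}$ on $\cO^\G$ as a gradient-like flow whose critical set is the compact flag manifold $\cO$ and whose transverse behaviour along $\cO$ is contracting, so that $\cO^\G$ becomes the total space of the bundle of stable manifolds over $\cO$, with the projection realized as ``flow to the limit''. First I would record the global dynamics. Since a regular hyperbolic semisimple orbit is closed in $\gg$, the function $\|\cdot\|^{2}$ of Lemma \ref{lem:hypbasic}(2) is proper and bounded below on $\cO^\G$; being strictly decreasing off the zero set, it is a proper Lyapunov function. Hence every forward trajectory has compact closure, so $\mathcal{S}$ is forward complete, and its $\omega$-limit set is a connected invariant subset of the zero set, which by Lemma \ref{lem:hypbasic}(3)--(4) equals $\cO$. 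Because Proposition \ref{pro:negeigen} shows the normal Hessian $(\nabla\mathcal{S})_{H^k}$ is nondegenerate (negative definite transverse to $\cO$), the manifold $\cO$ is a nondegenerate Morse--Bott critical manifold, and a \L ojasiewicz/Morse--Bott argument upgrades the $\omega$-limit to convergence to a single point. This defines $\pi\colon\cO^\G\to\cO$, $\pi(X)=\lim_{t\to+\infty}\Phi_t(X)$, with fibres the stable manifolds $W^{s}(p)$.

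Next I would promote $\pi$ to a locally trivial fibration. Proposition \ref{pro:negeigen} gives that all nonzero eigenvalues of $(\nabla\mathcal{S})_{H^k}$ are strictly negative, so the compact $\cO$ is normally attracting; the stable-manifold theorem for normally hyperbolic critical manifolds produces, near $\cO$, a smooth foliation by strong-stable leaves diffeomorphic to the Euclidean normal fibres $\mathrm{Im}(\nabla\mathcal{S})_{H^k}$ of rank $\dim\nn$. Since the proper Lyapunov function drives every trajectory into this neighbourhood, flowing the local leaves backward globalizes them, and $\pi$ becomes a locally trivial fibration over the compact base $\cO$ with fibre $\cong\R^{\dim\nn}$, which is (\ref{eq:submersion}).

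To pin the fibres down concretely I would identify, for $H^w\in\cO\cap\aa$, the stable manifold with the Iwasawa fibre $H^w+\nn$. For $X=(H^w)^{n}\in H^w+\nn\subset\uu$ one has $-\theta X\in\aa\oplus\overline{\nn}$, so $\pi_\uu[X,-\theta X]\in\uu$ and $\mathcal{S}(X)\in[\uu,\uu]\subset\nn$; thus $H^w+\nn$ is $\mathcal{S}$-invariant. On it $\|\cdot\|^{2}$ is proper with a unique critical point $H^w$, because by Proposition \ref{pro:comp-rulings} the section $\cO$ meets the Iwasawa fibre $H^w+\nn$ only at $H^w$; hence every trajectory there converges to $H^w$, giving $H^w+\nn\subseteq W^{s}(H^w)$. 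As both are connected of dimension $\dim\nn$ and $H^w+\nn$ is closed in $\cO^\G$, they coincide.

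Finally I would treat the profile. By Lemma \ref{lem:hypbasic}(1) the closed subspace $V_\p$ is forward invariant, so each $V_\p$-trajectory converges inside $V_\p$, yielding $\pi(\cO^\G_\p)\subseteq\cO_\p$ and the inclusion $\cO^\G_\p\subseteq\pi^{-1}(\cO_\p)$. For the reverse inclusion I would argue by dimension: writing $N_\p$ for the codimension of $V_\p$, the transversality of Lemma \ref{pro:Hess-manifold} gives $\dim\cO^\G_\p=\dim\cO^\G-N_\p=2\dim\nn-N_\p$ and $\dim\cO_\p=\dim\cO-N_\p=\dim\nn-N_\p$, so $\dim\pi^{-1}(\cO_\p)=\dim\cO_\p+\dim\nn=2\dim\nn-N_\p=\dim\cO^\G_\p$. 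Since $\cO^\G_\p\subseteq\pi^{-1}(\cO_\p)$ are submanifolds of equal dimension, $\cO^\G_\p$ is open in $\pi^{-1}(\cO_\p)$, and being $\pi^{-1}(\cO_\p)\cap V_\p$ it is also closed there; as each fibre $W^{s}(p)$ with $p\in\cO_\p$ is connected and meets $\cO^\G_\p$ at $p$, it lies entirely in $\cO^\G_\p$. Hence $\cO^\G_\p=\pi^{-1}(\cO_\p)$, and (\ref{eq:Hess-submersion-pullback}) is the pullback bundle; the fibre over $H^w\in\cO_\p\cap\aa$ is $W^{s}(H^w)\cap V_\p=(H^w+\nn)\cap V_\p=H^w+\nn$, using $\nn\subset V_\p$ and $H^w\in\aa\subset V_\p$. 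I expect the main obstacle to be not this last dimension count but the middle step: globally promoting the flow-to-limit map to a genuine fibre bundle over the non-compact ambient $\cO^\G$ and matching its abstract strong-stable leaves with the explicit affine fibres $H^w+\nn$, which is what makes the normally hyperbolic machinery effective here.
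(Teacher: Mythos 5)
Your proposal is correct, and its first half (properness of the Lyapunov function, convergence of trajectories into $\cO$, normal attractivity from Proposition \ref{pro:negeigen}, and the global stable-manifold theorem to produce the locally trivial fibration $\cO^\G\to\cO$ with Euclidean fibres) is essentially the paper's argument; the paper invokes the Hirsch--Pugh--Shub/Fenichel machinery at exactly this point, and does not need the \L ojasiewicz step you mention, since single-point convergence already comes with the strong-stable foliation. Where you genuinely diverge is the profile statement. The paper restricts $\mathcal{S}$ to $\cO^\G_\p$ and re-runs the normally hyperbolic argument intrinsically: it computes that the kernel of the restricted linearization is $T\cO_\p$, uses the rank theorem to see that its image has rank $\dim\nn$ and therefore equals $\mathrm{Im}(\nabla\mathcal{S})|_{\cO_\p}$, and concludes that the stable fibres of $\cO^\G_\p\to\cO_\p$ are full stable fibres of $\cO^\G\to\cO$. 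You instead use only forward invariance of $V_\p$ to get $\cO^\G_\p\subseteq\pi^{-1}(\cO_\p)$, then a dimension count via the transversality of Lemma \ref{pro:Hess-manifold} together with an open-and-closed argument in each connected fibre to force equality. Both are valid; the paper's route yields the normal hyperbolicity of the restricted flow as an explicit byproduct, while yours is softer and avoids re-examining the linearization. Your identification of $W^{s}(H^w)$ with $H^w+\nn$ (invariance via $\mathcal{S}(X)\in[\uu,\uu]\subset\nn$, uniqueness of the critical point from $\cO\cap(H^w+\nn)=\{H^w\}$, then equality of closed connected equidimensional submanifolds) spells out a step the paper compresses into the remark that $\mathcal{S}$ is tangent to the $\Ad(\U)$-orbits, so the affine subspaces must be fibres.
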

\begin{proof}
The norm square  $||\cdot ||^2$ is a proper function on $\gg$.
This, together with items (2) in Lemma \ref{lem:hypbasic}, implies
that $\mathcal{S}$ is complete on $\gg$.  Since $\cO^\G\subset \gg$ is made of semisimple elements it is a closed submanifold
\cite[Proposition 10.1]{BHC}. This, together with items (2) and (3) in Lemma \ref{lem:hypbasic}, implies
that the limit set of a trajectory in $\cO^\G$ must be contained in $\cO$.
Proposition \ref{pro:negeigen} and elementary O.D.E. theory imply that the flow of $\mathcal{S}$ on  $\cO^\G$
is $k$-normally hyperbolic relative to $\cO$, for all $k\in \N$.
By the unstable manifold theorem \cite{HPS,F1} (the global version using completeness of $\mathcal{S}$) there is a canonically defined surjective
submersion
$\cO^\G\to \cO$ where each fibre are the trajectories with limit set the corresponding base point. The estimates in \cite[Section IA]{F2}
for the projection map to be
smooth are satisfied. The fibration has vertical
bundle at $\cO$ exactly $\mathrm{Im}(\nabla\mathcal{S})$. There exist (non-canonical) fibre bundle diffeomorphisms from
$\cO^\G$ to $\mathrm{Im}(\nabla\mathcal{S})$. It also follows that $\cO^\G\to \cO$
is a locally trivial fibration (alternatively, one can use that this is always the case for surjective submersion with fibers diffeomorphic
to Euclidean space \cite[Corollary 31]{Me}).

Let $\p$ be a profile for $\gg$.  By item (1) in Lemma \ref{lem:hypbasic} the vector field $\mathcal{S}$ is tangent to
$\cO_\p^\G$. Let us denote its restriction by $\mathcal{S}|_{\cO_\p^\G}$. 
We repeat the arguments above but now for $\mathcal{S}|_{\cO_\p^\G}$ to deduce that 
it is complete with
trajectories having non-empty limit set contained in its zero set $\cO_\p$. Its linearization there 
\[\nabla(\mathcal{S}|_{\cO_\pp^\G}):T\cO_\p^\G|_{\cO_\p}\to T\cO_\p^\G|_{\cO_\p}\]
is simply the restriction to  
 $T\cO_\p^\G|_{\cO_\p}$ of the linearization  $\nabla\mathcal{S}:T\cO^\G|_{\cO}\to T\cO^\G|_{\cO}$. Therefore the kernel of $\nabla(\mathcal{S}|_{\cO_\pp^\G})$ 
must be  
\[\ker\left(\nabla\mathcal{S}\right)\cap T\cO_\p^\G|_{\cO_\p}=\pp\cap T\cO_\p^\G|_{\cO_\p}=T\cO_\p.\]
By the rank Theorem the image of $\nabla(\mathcal{S}|_{\cO_\pp^\G})$ is a bundle whose rank is the dimension of $\nn$ (this is the codimension
of $\cO_\p$ in $\cO_\p^\G$, as seen in \cite[Section 7]{MP} or in the proof of Lemma \ref{pro:Hess-manifold}). Since the dimension of
$\nn$ equals the
rank of $\nabla\mathcal{S}:T\cO^\G|_{\cO}\to T\cO^\G|_{\cO}$, we have
\[\mathrm{Im}\left(\nabla(\mathcal{S}|_{\cO_\pp^\G})\right)=\mathrm{Im}(\nabla\mathcal{S})|_{\cO_\p}.\]
Therefore $\mathcal{S}|_{\cO_\pp^\G}$ is also normally hyperbolic relative to $\cO_\p$ and the (stable) fibers of $\cO_\p^\G$ are full
(stable) fibers of $\cO^\G$, which proves (\ref{eq:Hess-submersion-pullback}).

By (\ref{eq:contracting}) $\mathcal{S}$ is tangent to the orbits of the adjoint action of $\U$. In particular, it is tangent to  
$H^w+\nn$. Because the dimension of the fibers of $\cO_\p^{\G}\to \cO_\p$ equals the dimension of $\nn$, the affine subspaces $H^w+\nn$, $w\in W$, must be fibers.
\end{proof}

Theorem \ref{thm:Hess-submersion} for $\gg=\sl(n,\R)$ gives the flow on traceless matrices onto symmetric ones announced in the introduction.
\begin{proof}[Proof of Theorem \ref{thm:Hess-submersion-sl}]
Clearly the zero set of $\mathcal{S}$ on $\sl(n,\R)$ are the normal matrices: this is item (3) in Lemma \ref{lem:hypbasic}.
Completeness is shown in the first part of the proof of Theorem \ref{thm:Hess-submersion}. Item (1) in Lemma \ref{lem:hypbasic} states that the flow preserves the spectrum
and the Hessenberg-type subspaces  (strictly speaking the lemma would only apply to traceless matrices with simple real spectrum,
but the Lax form of $\mathcal{S}$ implies the result for all traceless matrices).

The first part of item (i) in Theorem \ref{thm:Hess-submersion-sl} corresponds to the first lines in the proof of Theorem \ref{thm:Hess-submersion}.
The normal hyperbolicity properties of $\mathcal{S}$ on the open subset of matrices with simple real spectrum follow from the proof of Proposition
\ref{pro:negeigen}. The statement of the Proposition is for individual hyperbolic orbits, but it is equally true the collection
of all regular hyperbolic orbits. Indeed, they fit into a submanifold of $\gg$
foliated by regular hyperbolic orbits for which normal regular hyperbolic elements are submanifold transverse to the foliation by regular hyperbolic orbits.

Items (iii) in Theorem \ref{thm:Hess-submersion-sl} is equation (\ref{eq:submersion}) in Theorem \ref{thm:Hess-submersion}.
Item (ii) asserts the submersion property for all
regular hyperbolic orbits, which also holds for the reasons stated in the paragraph above.

Item (iv) is the equality in equation (\ref{eq:Hess-submersion-pullback}) between the restriction of the submersion to the
Hessenberg-type subspace $V_\p$, and the pullback of the
submersion over the intersection of the base with $V_\p$. In particular, the only flow lines converging to
diagonal matrices -- symmetric upper triangular matrices-- are those made of upper triangular matrices, the
Hessenberg-type subspace for the empty profile.
\end{proof}

\noindent {\bf Example 4.} We write the vector field (\ref{eq:contracting}) for  $\sl(2,\R)$
and discuss Theorem  \ref{thm:Hess-submersion} for the hyperboloids.
Identify  $\R^3$ with $\sl(2,\R)$ by mapping the canonical basis to
\[\begin{pmatrix}
    1 & 0 \\ 0 & -1
  \end{pmatrix},\quad
  \begin{pmatrix}
    0 & 1 \\ 1 & 0
  \end{pmatrix}, \quad
   \begin{pmatrix}
    0 & -1 \\ 1 & 0
  \end{pmatrix}.\]
In Euclidean space coordinates the cubic vector field  (\ref{eq:contracting}) is
\begin{equation}\label{eq:contracting-2x2}
\mathcal{S}(x,y,z)=(-2zx(y+z),2zx^2-2z^2y,-2zx^2-2zy^2).
\end{equation}
It is tangent to the level sets of $x^2+y^2-z^2-1$ which correspond to the adjoint orbits (or union of them) and its stationary
points contain the plane $z=0$ which corresponds to traceless symmetric matrices. By
Theorem \ref{thm:Hess-submersion}, each stable point $(x,y,0) \ne 0$ is the limit point of two trajectories
of pairs of flow lines. These three orbits of the vector field (\ref{eq:contracting-2x2}) fit into a fiber of a locally trivial fibration of the hyperboloid over
the circle (the manifold of real full flags) through the point $(x,y,0)$.

We could not find and explicit expression neither for the trajectories
of the vector field (\ref{eq:contracting-2x2}) nor for the submersion it induces on hyperboloids. The explicit information that
Theorem (\ref{thm:Hess-submersion}) provides is that 
\begin{itemize}
 \item over $(x,0,0)$, which corresponds to a diagonal matrix, the fiber is the affine line tangent to $(0,-x,x)$, which
corresponds to an upper
triangular matrix;
\item the vector field (\ref{eq:contracting-2x2}) is tangent to the plane $x=0$ which corresponds to upper triangular matrices. Therefore 
the  hyperbolas on that plane are also fibers
of the submersion.
\end{itemize}

We can however compare the vertical tangent bundle of the submersion
at the circle with that of the Iwasawa fibration. The latter has as frame the vector field
\[(y,-x,\sqrt{x^2+y^2}),\quad x^2+y^2=\lambda^2,\,\lambda>0.\]
Following Proposition \ref{pro:negeigen}, to compute a frame for the former vertical tangent bundle we have to apply
the linearization of (\ref{eq:contracting-2x2}) at points of the circle, and then apply it to the Iwasawa frame.
This results into the (suitably rescaled) frame
\[(xy,-x^2,x^2+y^2),\quad x^2+y^2=\lambda^2,\,\lambda>0.\]

\end{document}